\newtheorem{theorem}{Theorem}[section]
\newtheorem*{theorem*}{Theorem}
\newtheorem{lemma}[theorem]{Lemma}
\newtheorem{proposition}[theorem]{Proposition}
\newtheorem{corollary}[theorem]{Corollary}
\newtheorem{definition}[theorem]{Definition}
\newtheorem{conjecture}[theorem]{Conjecture}
\newtheorem{problem}[theorem]{Problem}
\theoremstyle{definition}
\newtheorem{example}[theorem]{Example}
\newtheorem{remark}[theorem]{Remark}
\def\y{{\tt y}}
\def\x{{\tt x}}
\def\R{{\tt R}}
\def\trop{\operatorname{trop}}
\def\Z{\mathbb{Z}}
\def\Q{\mathbb{Q}}
\def\C{\mathcal{Q}}
\def\LSym{\mathrm{LSym}}
\def\sl{\mathfrak{sl}}
\def\wt{\mathrm{wt}}
\def\Frac{{\rm Frac}}
\newcommand\defn[1]{{\bf #1}}
\def\qed{\hfill $\vrule height 2.5mm  width 2.5mm depth 0mm $}
\newcommand{\V}{{\large\textvisiblespace}}
\newcommand{\X}{{}}
\def\D{\mathscr{D}}
\author{Thomas Lam}
\address{Department of Mathematics, University of Michigan,
2074 East Hall, 530 Church Street, Ann Arbor, MI 48109-1043, USA}
\email{tfylam@umich.edu}\thanks{T.L. was supported by NSF grants DMS-1160726,. DMS-1464693, and a Simons Fellowship.}
\author{Pavlo Pylyavskyy}
\address{Department of Mathematics, University of Minnesota,
127 Vincent Hall, 206 Church Street, Minneapolis, MN 55455, USA}
\email{ppylyavs@umn.edu}\thanks{P.P. was supported by NSF grants DMS-1068169, DMS-1148634, DMS-1351590 and a Sloan Fellowship.}
\author{Reiho Sakamoto}
\address{Department of Physics, Tokyo University of Science, Kagurazaka, Shinjuku, Tokyo, 162-8601, Japan}
\email{reiho@rs.tus.ac.jp}
\title{Rigged configurations and cylindric loop Schur functions}
\begin{document}
\begin{abstract}
Rigged configurations are known to provide action-angle variables for remarkable discrete dynamical systems known as box-ball systems.
We conjecture an explicit piecewise-linear formula to obtain the shapes of a rigged configuration from a tensor product of one-row crystals.  We introduce cylindric loop Schur functions and show that they are invariants of the geometric $R$-matrix.  Our piecewise-linear formula is obtained as the tropicalization of ratios of cylindric loop Schur functions.  We prove our conjecture for the first shape of a rigged configuration, thus giving a piecewise-linear formula for the lengths of the solitons of a box-ball system. 
\end{abstract}
\maketitle

\tableofcontents

\section{Introduction}
Kashiwara \cite{Kas1} introduced crystal bases as combinatorial analogues for irreducible representations of quantum groups.  In this paper we will be concerned with tensor products of the Kirillov--Reshetikhin crystals, which we loosely call affine crystals, associated to a certain class of finite-dimensional representations of the quantum affine algebra $U_q(\hat \sl_n)$.

As a set, an affine crystal $B=B^{r_1,s_1}\otimes B^{r_2,s_2}\otimes\cdots\otimes B^{r_m,s_m}$ of type $\hat \sl_n$ is the direct product of sets of semistandard Young tableaux (with entries in $\{1,2,\ldots,n\}$), of rectangular shape $r_i \times s_i$.  Affine crystals are equipped with a remarkable combinatorial $R$-matrix $R: B^{r,s} \otimes B^{r',s'} \to B^{r',s'} \otimes B^{r,s}$ that generates an action of the symmetric group $S_m$ on $B$.

Rigged configurations are collections $$(\nu,J)=\left\{
(\nu^{(0)}),(\nu^{(1)},J^{(1)}), (\nu^{(2)},J^{(2)}),\ldots,(\nu^{(n-1)},J^{(n-1)})\right\}$$ of partitions $\nu^{(a)}$ and integer sequences $J^{(a)}$ subject to certain conditions (see Section \ref{sec:RC}). The rigged configurations were introduced in the papers \cite{KKR,KR}, motiviated by the Bethe ansatz for the isotropic Heisenberg model (see \cite{KS} for the precise relation between them).  The rigged configuration bijection $\Phi: B \to \{(\nu,J)\}$ \cite{KSS} establishes a bijection between the affine crystal, modulo the combinatorial $R$-matrix, and a set of rigged configurations.  This bijection is reviewed in Section \ref{sec:Phi}.\footnote{A Mathematica implementation of the bijection $\Phi$ is available at \cite{S:web}.}

In recent years, it has been realized that rigged configurations can be thought of as action-angle coordinates of certain discrete dynamical systems known as box-ball systems \cite{KOSTY} (see \cite{S:review} for related references).  Box-ball systems are surprisingly rich integrable systems defined using elementary rules (see Section \ref{sec:bbs}).  In the language of dynamical systems, finding the rigged configuration is akin to solving an initial value problem, and the inverse $\Phi^{-1}$ of the rigged configuration bijection has been written down explicitly as piecewise-linear functions in the case $B = B^{1,s_1}\otimes B^{1,s_2}\otimes\cdots\otimes B^{1,s_m}$ \cite{KSY,S_2006} (see \cite[Section 5]{S:review} for an introductory account). This inverse $\Phi^{-1}$ is expressed in terms of certain $\tau$-functions which play a similar role in this theory as the classical $\tau$-functions play in the traditional theory of integrable systems.

The present work is concerned with a piecewise-linear description of the map $\Phi$, in the special case that $B = B^{1,s_1}\otimes B^{1,s_2}\otimes\cdots\otimes B^{1,s_m}$ is a product of one-row crystals.  Let $p = b_1 \otimes \cdots \otimes b_m \in B$.  We let $x_j^{(i+j-1)}$ stand for the number of occurrences of the letter $i$ in $b_{m+1-j}$.  The rigged configuration $\Phi(p)$ is invariant under the combinatorial $R$-matrix action on $B$, and thus we expect each part $\nu_k^{(a)}$ of each partition (and also every rigging) in $\Phi(p)$ to be expressed as a piecewise-linear expression in $x_j^{(i)}$ invariant under the combinatorial $R$-matrix.  Since the combinatorial $R$-matrix generates an action of the symmetric group $S_m$, the answer we seek maybe thought of as a tropical analogue of a symmetric function.

There is a rational-lifting of the entire story, where combinatorial crystals are replaced by geometric crystals \cite{BeKa}, and combinatorial $R$-matrices by geometric $R$-matrices. For example, Yamada \cite{Y} considered a geometric analogue of the piecewise-linear formula \cite{HHIKTT} of the combinatorial $R$-matrix for the tensor product of one row crystals and introduced the elementary loop symmetric functions as polynomial invariants for the geometric $R$-matrix. Lam and Pylyavskyy \cite{LPloop} have studied the ring of polynomial invariants of the geometric $R$-matrix, called the ring of loop symmetric functions $\LSym_m$.  In \cite{LP} it was shown that the energy function of the affine crystal is equal to the tropicalization of a loop Schur function, the analogue of a Schur function in the ring $\LSym_m$.  The energy function plays a crucial role in affine crystal theory, especially in the relation with solvable lattice models.  This project began with the hope that formulae for the rigged configuration $\Phi(p)$ could be obtained as tropicalizations of other distinguished elements in $\LSym_m$.

In Theorem \ref{thm:cylindric} below, we introduce cylindric loop Schur functions and show that they are invariants of the geometric $R$-matrix.  These are analogues in the ring $\LSym_m$ of the cylindric Schur functions studied by McNamara \cite{M} and Postnikov \cite{P}, in a very different context.  Our main conjecture (Conjecture \ref{conj:shapes}) states that each $\nu_k^{(a)}$, as a function of $x_j^{(i)}$, has an expression of the form
$$
\nu^{(a)}_k = \trop \left(\frac{s_{\D_a(\lambda(a,k-1))}^{(0)}}{s_{\D_a(\lambda(a,k))}^{(0)}} \right)(x_j^{(i)})
$$
where $s_{\D_a(\lambda(a,k))}^{(0)}$ denotes a particular cylindric loop Schur function that we define.  For the precise definitions, see Section \ref{sec:main}.
We remark that direct computations of explicit piecewise-linear formulae
for $\Phi(p)$ for certain examples are presented in \cite{BK} (see Remark \ref{rem:BerensteinKirillov}).

We prove our main conjecture in the case that $a= 1$, that is, we are concerned with the first part of the rigged configuration $\Phi(p)$.  This case is the most important from the point of view of box-ball systems: the sequence $\nu_1^{(1)}, \nu_2^{(1)}, \nu_3^{(1)}, \ldots$ is exactly the sequence of lengths of the solitons in the corresponding box-ball system, arranged in increasing order. As a corollary, we obtain an upper bound for the number of solitons contained in a path (Corollary \ref{cor:number_of_solitons}). Another interesting property of the $a=1$ case is that we can write down the polynomials involved in this case in a very explicit form (Proposition \ref{prop:simple_formula}). Our proof relies heavily on the results of Lam and Pylyavskyy \cite{LP} and Sakamoto \cite{S}.

We now discuss possible implications of our Conjecture \ref{conj:shapes}.
The basic structure of our proof for the $\nu^{(1)}$ case (Theorem \ref{thm:main}) is as follows.
The results of \cite{S} (see Theorem \ref{th:E=Q}) expresses the partition $\nu^{(1)}$ in terms of combinatorial $R$-matrices and energy functions by using time evolutions of the box-ball systems (see the diagram (\ref{eq:def_bbs})
and Definition \ref{def:energy}).
We make use of the explicit formula for the energy \cite{LP} (see Theorem \ref{thm:LP})
to obtain Theorem \ref{thm:main}.
Now the results of \cite{S} implies that the information of $\nu^{(a)}$ for $a>1$
is essentially connected with more general crystals $B^{r,s}$ even if we only consider
the case $p\in\bigotimes_iB^{1,s_i}$.
Also the formulation of Conjecture \ref{conj:shapes} is uniform for arbitrary $\nu^{(a)}$.
On the other hand, a construction of geometric crystals for the general case
$\bigotimes_iB^{r_i,s_i}$ is still an open problem.
Therefore it is tempting to say that our Conjecture \ref{conj:shapes}
provides supporting evidence that geometric crystals for arbitrary Kirillov--Reshetikhin crystals
$\bigotimes_iB^{r_i,s_i}$ should have a beautiful structure.
We hope that our result provides a motivation for their construction.

Note that in \cite{Ok} Okado has pursued a related idea of defining rigged configurations over $\mathbb Q$, with the goal of eventually defining them over $\mathbb R$. Our formulas accomplish exactly that for the special case of tensor products of one-row crystals.

This paper is organized as follows.
In Section \ref{sec:bbs}, we review necessary background on the box-ball systems and crystal bases.
In Section \ref{sec:rig}, we review the theory of rigged configurations and the rigged configuration bijection.
In Section \ref{sec:loop}, we review the theory of loop symmetric functions.
Then we introduce the cylindric loop Schur functions and show that they belong to the ring of loop symmetric functions.
In Section \ref{sec:main}, we formulate our conjecture about the shapes $\nu^{(a)}$ of the rigged configurations.
In Section \ref{sec:first-shape}, we provide a proof for the case $\nu^{(1)}$ and discuss related subjects.

A Mathematica implementation of the main constructions in the present paper is available at \cite{S:web}.

\medskip

\noindent {\bf Acknowledgments.}  We thank Gabe Frieden for pointing out a number of errors in an earlier version of this article, including an important correction to Conjecture \ref{conj:shapes}.

\section{Box-ball systems}\label{sec:bbs}
\subsection{Definition}
The content of the present work is deeply interrelated with a remarkable
discrete soliton system called the box-ball system.
In the simplest case, the box-ball system is a dynamical
system defined over sequences of positive integers which we call paths.
The following is a typical example of the dynamics:
\begin{center}
$t=0$: \texttt{\V 332\V \V \V 42\V \V \V 4\V \V \V \V \V \V \V \V \V \V \V \V \V \V \V \V \V }\\
$t=1$: \texttt{\V \V \V \V 332\V \V 42\V \V 4\V \V \V \V \V \V \V \V \V \V \V \V \V \V \V \V }\\
$t=2$: \texttt{\V \V \V \V \V \V \V 332\V 42\V 4\V \V \V \V \V \V \V \V \V \V \V \V \V \V \V }\\
$t=3$: \texttt{\V \V \V \V \V \V \V \V \V \V 33\V 4242\V \V \V \V \V \V \V \V \V \V \V \V \V }\\
$t=4$: \texttt{\V \V \V \V \V \V \V \V \V \V \V \V 33\V 2\V 442\V \V \V \V \V \V \V \V \V \V }\\
$t=5$: \texttt{\V \V \V \V \V \V \V \V \V \V \V \V \V \V 3\V 32\V \V 442\V \V \V \V \V \V \V }\\
$t=6$: \texttt{\V \V \V \V \V \V \V \V \V \V \V \V \V \V \V 3\V \V 32\V \V \V 442\V \V \V \V }\\
$t=7$: \texttt{\V \V \V \V \V \V \V \V \V \V \V \V \V \V \V \V 3\V \V \V 32\V \V \V \V 442\V }
\end{center}
Here ``\texttt{\V}" is a substitute for the letter 1 which we regard as an empty place.
The time evolution proceeds from top to bottom as indicated by the time variable $t$.
When $t=0$, we see three distinctive chunks of lengths 3, 2 and 1.
We regard them as three solitary waves (solitons).
For $t=0,1,2$, these three solitons propagate at the velocity equal to each length.
At $t=3,4$, there are collisions of the solitons, though we regain
lengths 1, 2 and 3 solitons at $t=5,6,7$.
This is a characteristic feature of ordinary soliton systems.

Let us denote the path at time $t$ by $(T^{1,\infty})^t(p)$ where
$p$ is the initial state ($t=0$).
In order to prevent the balls moving out of the path,
we assume that the right part of $p$ is a sufficiently long sequence of the letter 1.
Then we have the following interpretation of the time evolution $T^{1,\infty}$
in terms of boxes and balls \cite{Tak}.
In this picture, we regard paths as arrays of capacity one boxes
and the letters $a>1$ as balls that can fill the boxes.
Then we introduce the operators $K_a$ $(a>1)$ which act on the paths by the following procedure:
\begin{enumerate}
\item
Move the leftmost ball $a$ to the nearest empty place on the right.
\item
Among the untouched balls $a$, move the leftmost $a$ to the nearest empty place on the right.
\item
Repeat the procedure until all the balls $a$ are moved exactly once.
\end{enumerate}
Suppose that the maximal number used in the path $p$ is $n$.
Then we define
\[
T^{1,\infty}(p)=K_2K_3\cdots K_{n}(p).
\]
When $n=2$, the system reduces to the original Takahashi--Satsuma system \cite{TS}.

\subsection{Crystals}\label{sec:R}
In order to fully clarify the mathematical structures behind box-ball systems,
Kashiwara's crystal base theory \cite{Kas1} provides a powerful tool.
Let $B^{r,s}$ be the Kirillov--Reshetikhin crystal of type $\hat{\mathfrak{sl}}_n$.
Here $r\in I_0$ where
\[
I_0:=\{1,2,\ldots,n-1\}
\]
and $s\in\mathbb{Z}_{>0}$.
As a set, $B^{r,s}$ is comprised of all rectangular semistandard tableaux of height $r$ and
width $s$, over the letters $1,2,\ldots,n$.
On $B^{r,s}$, we introduce an algebraic structure by the Kashiwara operators
$\tilde{e}_i, \tilde{f}_i$ ($i=0,1,\ldots,n$).
For the explicit forms of their actions, we refer to \cite{KN} for the $i\in I_0$ case
and to \cite{Shimo} for the $i=0$ case.  See \cite{O} for a concise introduction to the subject.

One of the nice properties of crystal bases is that they have a nice tensor product.
As a set, $B\otimes B'$ is just the product of $B$ and $B'$ and one can define the crystal structure on them in the following way.
For $b\in B^{r,s}$, define the functions
$\varepsilon_i,\varphi_i:B\longmapsto\mathbb{Z}$ by
\begin{align*}
\varepsilon_i(b)=\max\{m\geq 0\,|\,\tilde{e}_i^mb\neq 0\},\qquad
\varphi_i(b)=\max\{m\geq 0\,|\,\tilde{f}_i^mb\neq 0\}.
\end{align*}
Then the Kashiwara operators act on $B\otimes B'$
according to the following rule:
\begin{align*}
\tilde{e}_i(b\otimes b')=&
\left\{\!
\begin{array}{ll}
\tilde{e}_ib\otimes b'&\text{if }\varphi_i(b)\geq\varepsilon_i(b')\\
b\otimes\tilde{e}_ib'&\text{if }\varphi_i(b)<\varepsilon_i(b')
\end{array}
\right.,\\
\tilde{f}_i(b\otimes b')=&
\left\{\!
\begin{array}{ll}
\tilde{f}_ib\otimes b'&\text{if }\varphi_i(b)>\varepsilon_i(b')\\
b\otimes\tilde{f}_ib'&\text{if }\varphi_i(b)\leq\varepsilon_i(b')
\end{array}
\right..
\end{align*}
Note that we use Kashiwara's original convention \cite{Kas1}
for the tensor product.

The tensor product $B\otimes B'$ has two additional natural structures
called the combinatorial $R$-matrix and the energy function \cite{Kas2}.
The combinatorial $R$-matrix is the unique crystal isomorphism
which swaps the two components $R:B\otimes B'\longmapsto B'\otimes B$.
On the other hand, the energy function $H:B\otimes B'\longmapsto\mathbb{Z}$
is defined by the following axiom.
For an element $b\otimes b'\in B\otimes B'$, suppose that we have
$R:b\otimes b'\longmapsto\tilde{b}'\otimes\tilde{b}$.
Under this situation, we have the following four possibilities for
the actions of Kashiwara operators:
\begin{align*}
\begin{array}{ll}
\text{(LL)}&R(\tilde{e}_ib\otimes b')=\tilde{e}_i\tilde{b}'\otimes\tilde{b},\\
\text{(LR)}&R(\tilde{e}_ib\otimes b')=\tilde{b}'\otimes\tilde{e}_i\tilde{b},\\
\text{(RL)}&R(b \otimes\tilde{e}_ib')=\tilde{e}_i\tilde{b}'\otimes\tilde{b},\\
\text{(RR)}&R(b\otimes \tilde{e}_ib')=\tilde{b}'\otimes\tilde{e}_i\tilde{b}.
\end{array}
\end{align*}
Then the function $H$ is uniquely defined by the following relations
\begin{align*}
H(\tilde{e}_i(b\otimes b'))=
\left\{
\begin{array}{ll}
H(b\otimes b')+1&\text{if }i=0\text{ and case (LL) occurs,}\\
H(b\otimes b')-1&\text{if }i=0\text{ and case (RR) occurs,}\\
H(b\otimes b')&\text{otherwise,}
\end{array}
\right.
\end{align*}
up to some additive constant.
As a side remark, the origin of this axiom is the Yang--Baxter relation for the so-called
affine combinatorial $R$-matrices.

Following \cite{Shimo} (see also \cite{SW}) we give an explicit combinatorial description of combinatorial $R$-matrices and energy functions.
Let $Y$ be a semistandard tableau and $x$ be some positive integer.
We denote by $(Y\leftarrow x)$ the Schensted row insertion.
We define the insertion of more general words by
$(Y\leftarrow xy)=((Y\leftarrow x)\leftarrow y)$ and so on.
Denote the rows of $Y$ by $y_1,y_2,\ldots,y_r$ from top to bottom.
Then the row word $\operatorname{row}(Y)$ is defined by concatenating
rows: $\operatorname{row}(Y)=y_ry_{r-1}\cdots y_1$.
Finally, let $\xi=(\xi_1,\xi_2,\ldots)$ and $\eta=(\eta_1,\eta_2,\ldots)$
be two partitions. Then the concatenation of $\xi$ and $\eta$
is the partition $(\xi_1+\eta_1,\xi_2+\eta_2,\ldots)$.

\begin{theorem}\label{thm:Hdefn}
Let $b\otimes b'\in B^{r,s}\otimes B^{r',s'}$.
Then the image of the combinatorial $R$-matrix
\[
R:b\otimes b'\longmapsto\tilde{b}'\otimes\tilde{b}\in B^{r',s'}\otimes B^{r,s}
\]
is uniquely characterized by
\[
(b'\longleftarrow\operatorname{row}(b))=
(\tilde{b}\longleftarrow\operatorname{row}(\tilde{b}')).
\]
Moreover, the energy function $H(b\otimes b')$ is given by the number of
nodes of $(b'\longleftarrow\operatorname{row}(b))$ outside the concatenation
of the partitions $(s^r)$ and $(s'^{r'})$.
\end{theorem}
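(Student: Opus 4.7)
The theorem makes two assertions: a combinatorial characterization of the image $(\tilde{b}',\tilde{b})$ of the $R$-matrix via Schensted insertion, and an expression for the energy function as the count of cells outside the concatenation shape. The plan is to follow the framework of \cite{Shimo}, reducing both to plactic (Knuth) properties of the reading words. Since $b, b', \tilde{b}, \tilde{b}'$ are themselves semistandard tableaux, iterated row insertion satisfies
\[
(b' \leftarrow \operatorname{row}(b)) \;=\; P\bigl(\operatorname{row}(b') \cdot \operatorname{row}(b)\bigr),\qquad (\tilde{b} \leftarrow \operatorname{row}(\tilde{b}')) \;=\; P\bigl(\operatorname{row}(\tilde{b}) \cdot \operatorname{row}(\tilde{b}')\bigr),
\]
so the asserted equality amounts to saying that the two concatenated words share an insertion tableau, i.e., are Knuth equivalent.

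\textbf{Identifying the map as the $R$-matrix.} Under Kashiwara's tensor convention, the classical $\sl_n$-crystal structure on $B^{r,s} \otimes B^{r',s'}$ is intertwined, via the (appropriately ordered) reading word, with the familiar crystal structure on words, whose classical connected components are classified by the common $P$-tableau. Since the combinatorial $R$-matrix is the unique $U_q(\sl_n)$-crystal isomorphism $B^{r,s} \otimes B^{r',s'} \to B^{r',s'} \otimes B^{r,s}$, on each classical component it is determined by matching $P$-tableaux and weights. Thus if $R(b\otimes b') = \tilde{b}' \otimes \tilde{b}$ the two reading words are Knuth equivalent, which is the insertion identity. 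Uniqueness of $(\tilde{b},\tilde{b}')$ subject to the rectangular shape constraints $(s^r)$, $(s'^{r'})$ follows from reverse Schensted insertion applied to the common $P$-tableau: the outer corners corresponding to the final letters of $\operatorname{row}(\tilde{b}')$ are forced by the requirement that $r's'$ reverse row-bumps terminate at a rectangle of shape $(s^r)$.

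\textbf{Energy formula.} Write $\lambda$ for the shape of the common insertion tableau and $\mu$ for the concatenation of $(s^r)$ and $(s'^{r'})$; the claim is that $H(b\otimes b')$ equals $|\lambda|-|\mu|$. Invariance under $\tilde{e}_i$ for $i \in I_0$ is automatic since $\lambda$ is a plactic invariant and $\mu$ depends only on the fixed shape data. The main obstacle is to verify the $\pm 1$ change under $\tilde{e}_0$ in cases (LL) and (RR) and invariance in (LR), (RL). Using an explicit description of $\tilde{e}_0$ on $B^{r,s}$ (for example as Sch\"utzenberger promotion composed with the classical $\tilde{e}_1$, or via the column-strict realization of \cite{Shimo}), one tracks how $\tilde{e}_0(b\otimes b')$ affects the concatenated reading word and hence the shape $\lambda$; a case analysis of the four possibilities shows that $|\lambda|$ gains one cell in (LL), loses one cell in (RR), and is unchanged in (LR), (RL), matching the defining axiom of $H$. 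Fixing the additive constant by normalizing on the classical highest weight of the Cartan component completes the verification.
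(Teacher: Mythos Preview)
The paper does not prove this theorem; it is quoted from \cite{Shimo} (see also \cite{SW}) and followed only by an illustrative example. So there is no ``paper's own proof'' to compare against, and your sketch is effectively a proposed reconstruction of the argument in those references.

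Your treatment of the $R$-matrix part is along the right lines, but it leans on a fact you do not state: the classical $\sl_n$-crystal decomposition of $B^{r,s}\otimes B^{r',s'}$ is \emph{multiplicity free} (a special feature of tensoring two rectangles). Without this, matching $P$-tableaux and weights would not pin down a unique classical isomorphism, and your uniqueness claim for $(\tilde b,\tilde b')$ would not follow. You should make this explicit.

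The energy argument has a genuine error. You set $\mu$ to be the concatenation of $(s^r)$ and $(s'^{r'})$ and then assert $H(b\otimes b')=|\lambda|-|\mu|$. But $|\lambda|=rs+r's'=|\mu|$ always, so this quantity is identically zero. The theorem is about the number of cells of $\lambda$ lying \emph{outside} $\mu$, i.e.\ $|\lambda\setminus\mu|$, which is nonzero precisely because $\mu\not\subseteq\lambda$ in general. Relatedly, your claim that ``$|\lambda|$ gains one cell in (LL), loses one cell in (RR)'' cannot be right: $\tilde e_0$ is an operator on crystals of fixed rectangular shape and never changes the total number of boxes, so $|\lambda|$ is constant. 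What actually varies under $\tilde e_0$ is the \emph{shape} $\lambda$ relative to the fixed $\mu$, and the case analysis must show that $|\lambda\setminus\mu|$ changes by $\pm 1$ or $0$ according to (LL), (RR), (LR)/(RL). As written, this step is not a proof; it needs to be redone tracking $|\lambda\setminus\mu|$ rather than $|\lambda|$.
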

\begin{example}
Let $b\otimes b'\in B^{2,2}\otimes B^{3,3}$ and
$\tilde{b}'\otimes\tilde{b}\in B^{3,3}\otimes B^{2,2}$ be the following elements:
\begin{align*}
b\otimes b'=
\Yvcentermath1\young(22,34)\otimes\young(113,234,455)\,,\qquad
\tilde{b}'\otimes\tilde{b}=\Yvcentermath1\young(122,334,445)\otimes\young(13,25)\,.
\end{align*}
Then we have $R:b\otimes b'\longmapsto\tilde{b}'\otimes\tilde{b}$ since we have
\begin{align*}
\Yvcentermath1\left(\young(113,234,455)\longleftarrow 3422\right)=
\left(\young(13,25)\longleftarrow 445334122\right)=
\young(11224,2333,445,5).
\end{align*}
In the tableau on the right hand side, we have one node which is outside of the shape
$\Yvcentermath1\Yboxdim6pt\yng(5,5,3)$.
Therefore we have $H(b\otimes b')=1$.
\qed
\end{example}

In general, let $p\in B=B^{r_1,s_1}\otimes B^{r_2,s_2}\otimes\cdots\otimes B^{r_m,s_m}$.
Suppose that $B'$ is an arbitrary reordering of the rectangles of $B$.
Then we obtain a map $R:p\longmapsto p'\in B'$ by successive applications of the local combinatorial $R$-matrices.

\subsection{Time evolutions}
Following the papers \cite{HHIKTT,FOY}, we introduce a crystal basis formulation
of box-ball systems.
The states of the box-ball system are elements of tensor products of crystals, which we call paths.
Suppose that we are given the path:
\begin{align}\label{eq:path}
p=b_1\otimes b_2\otimes\cdots\otimes b_m\in
B^{r_1,s_1}\otimes B^{r_2,s_2}\otimes\cdots\otimes B^{r_m,s_m}.
\end{align}
In the following, we define the time evolutions $T^{r,s}$
($r\in I_0$, $s\in\mathbb{Z}_{>0}$) of the box-ball system on $p$.

The operator $T^{r,s}$ depends on the choice of the highest weight element $u^{r,s}\in B^{r,s}$.
Here $u^{r,s}$ is given explicitly as follows:
\begin{align}\label{eq:def_carrier}
u^{r,s}:=
\overbrace{
\begin{array}{|c|c|c|c|c|}
\hline
1&1&1&\cdots&1\\
\hline
2&2&2&\cdots&2\\
\hline
\vdots&\vdots&\vdots&\ddots&\vdots\\
\hline
r&r&r&\cdots&r\\
\hline
\end{array}
}^s
\in B^{r,s}.
\end{align}
Let us express the isomorphism $R:b\otimes b'\longmapsto\tilde{b}'\otimes\tilde{b}$
by the following vertex diagram:
\[
\unitlength 12pt
\begin{picture}(8,8)
\put(1,4){\vector(1,0){6}}
\put(4,7){\vector(0,-1){6}}
\put(0.4,3.7){$b$}
\put(3.8,7.1){$b'$}
\put(3.8,0){$\tilde{b}'$}
\put(7.2,3.7){$\tilde{b}$}
\end{picture}
\]

For the selected values of $r$ and $s$, we set $u^{(1)}:=u^{r,s}$
and consider the following diagram:
\begin{equation}\label{eq:def_bbs}
\unitlength 12pt
\begin{picture}(31,8)
\put(1,4){\vector(1,0){6}}
\put(4,7){\vector(0,-1){6}}
\put(-0.6,3.8){$u^{(1)}$}
\put(3.7,7.3){$b_1$}
\put(3.7,0.1){$b_1'$}
\put(7.1,3.8){$u^{(2)}$}
\put(7.6,0){
\put(1,4){\vector(1,0){6}}
\put(4,7){\vector(0,-1){6}}
\put(3.7,7.3){$b_2$}
\put(3.7,0.1){$b_2'$}
\put(7.1,3.8){$u^{(3)}$}
}
\multiput(16.4,4)(0.2,0){23}{\circle*{0.1}}
\put(21.8,0){
\put(1,4){\vector(1,0){6}}
\put(4,7){\vector(0,-1){6}}
\put(-0.7,3.8){$u^{(m)}$}
\put(3.7,7.3){$b_m$}
\put(3.7,0.1){$b_m'$}
\put(7.1,3.8){$u^{(m+1)}$}
}
\end{picture}
\end{equation}
More precisely, we have $R:u^{(1)}\otimes b_1\longmapsto b'_1\otimes u^{(2)}$,
and by using $u^{(2)}$, we compute $R:u^{(2)}\otimes b_2\longmapsto b'_2\otimes u^{(3)}$,
and continue similarly.

Then, by using the bottom row of (\ref{eq:def_bbs}),
we define the time evolutions $T^{r,s}$ of the box-ball system as
\[
T^{r,s}(p):=b'_1\otimes b'_2\otimes\cdots\otimes b'_m.
\]
Thus we obtain an infinite set of time evolutions $T^{r,s}$
depending on the parameters $r$ and $s$.
By virtue of the Yang--Baxter relation of the combinatorial $R$-matrices,
we can show
\[
T^{r_2,s_2}T^{r_1,s_1}(p)=T^{r_1,s_1}T^{r_2,s_2}(p)
\]
for arbitrary tensor products $p$, which implies that the box-ball system is integrable.
See \cite{FOY} for details.

\begin{example}
The example given at the beginning of Section \ref{sec:bbs}
corresponds to the case $p\in (B^{1,1})^{\otimes 30}$.
In this case, we have $T^{1,3}(p)=T^{1,4}(p)=\cdots =:T^{1,\infty}(p)$.
\qed
\end{example}

The following quantity was originally introduced in \cite{FOY}.

\begin{definition}\label{def:energy}
For the path $p$ of (\ref{eq:path}), we define
\begin{align*}
E^{r,s}(p):=\sum_{k=1}^m H(u^{(k)}\otimes b_k)
\end{align*}
where $u^{(1)}=u^{r,s}$ and the remaining elements $u^{(k)}$ are defined
as in the diagram (\ref{eq:def_bbs}).
\end{definition}

\section{Rigged configuration bijection}
\label{sec:rig}
Rigged configurations are combinatorial objects which are in one to one
correspondence with elements of the tensor products of crystals.
Originally, rigged configurations were introduced in \cite{KKR,KR}
with a motivated by the Bethe ansatz for the isotropic
Heisenberg model \cite{Bethe}.
Although there are serious technical difficulties concerning the Bethe ansatz,
recent progress \cite{NW,KS2} has enabled us to make the relationship
between rigged configurations and solutions to the so-called Bethe ansatz equations
explicit \cite{KS} (see \cite{DG} for a recent follow-up).
Nowadays, rigged configurations are regarded as a canonical presentation
of the affine crystals.
For example, several highly non-trivial involutions of the underlying algebras have
beautiful relationship with rigged configurations \cite{OS,OSS}.

In Section \ref{sec:definitions}, we prepare several basic definitions
about the rigged configurations which we need in both Sections \ref{sec:RC} and \ref{sec:Phi}.
In Section \ref{sec:RC}, we define rigged configurations
and in Section \ref{sec:Phi} we describe the bijection between
the tensor product of crystals and the rigged configurations.
Sections \ref{sec:RC} and \ref{sec:Phi} are mostly independent with each other.

\subsection{Basic definitions}
\label{sec:definitions}
In this paper, we consider rigged configurations corresponding
to tensor products of the form $\bigotimes_i B^{1,s_i}$ of type $\hat\sl_n$.
However, we remark that rigged configurations have a natural
generalization which includes arbitrary tensor products $\bigotimes_i B^{r_i,s_i}$ \cite{KSS}.

We denote a rigged configuration in our case as follows:
\begin{align*}
(\nu,J)=\left\{
(\nu^{(0)}),(\nu^{(1)},J^{(1)}), (\nu^{(2)},J^{(2)}),\ldots,(\nu^{(n-1)},J^{(n-1)})
\right\}.
\end{align*}
Here, for $a=0,1,\ldots,n-1$, $\nu^{(a)}=(\nu^{(a)}_1,\nu^{(a)}_2,\ldots)$
are partitions called the {\bf configurations}.
For each part of the configuration $\nu^{(a)}_i$,
we associate an integer $J^{(a)}_i$ called the {\bf rigging}.
We call the pair $(\nu^{(a)}_i,J^{(a)}_i)$ a {\bf string}.
For $a\in I_0$ and $k\in\mathbb{Z}_{\geq 0}$,
define the {\bf vacancy number} as
\begin{align*}
P^{(a)}_k(\nu)&=Q_k(\nu^{(a-1)})-2Q_k(\nu^{(a)})+Q_k(\nu^{(a+1)}),\\
Q_k(\nu^{(a)})&=\sum_{i}\min(k,\nu^{(a)}_i).
\nonumber
\end{align*}
Here we understand that $\nu^{(n)}=\emptyset$.

In the following, we assume that every $(\nu^{(a)},J^{(a)})$ contains the string $(0,0)$.
Note that we always have $P^{(a)}_0(\nu)=0$.

\subsection{Rigged configurations}
\label{sec:RC}
On the set of all possible $(\nu,J)$, we have to impose several conditions to specify valid rigged configurations.
For this purpose we first explicitly identify the rigged configurations corresponding to highest weight vectors,
and then use the Kashiwara operators introduced by Schilling \cite{Sc_IMRN} to generate the rest of the rigged configurations.  In \cite[Section 3]{S:2014} it is shown that these operators satisfy the axioms of crystals directly from their definition and the definitions of rigged configurations.

If all the strings $(\nu^{(a)}_i,J^{(a)}_i)$ of $(\nu,J)$ satisfy the condition
\begin{align*}
P^{(a)}_{\nu^{(a)}_i}(\nu)\geq J^{(a)}_i\geq 0,
\end{align*}
then $(\nu,J)$ is the rigged configuration corresponding to a highest weight vector.  For the string $(\nu^{(a)}_i,J^{(a)}_i)$, the quantity
$P^{(a)}_{\nu^{(a)}_i}(\nu)-J^{(a)}_i$ is called the {\bf corigging}.

\begin{definition}
Let $x_\ell$ be the smallest rigging of $(\nu^{(a)},J^{(a)})$.
\begin{enumerate}
\item[(1)]
If $x_\ell\geq 0$, define $\tilde{e}_a(\nu,J)=0$.
Otherwise let $\ell$ be the minimal length of the strings of $(\nu^{(a)},J^{(a)})$
with the rigging $x_\ell$.
Then $\tilde{e}_a(\nu,J)$ is obtained by replacing one of the strings $(\ell,x_\ell)$
by $(\ell-1,x_\ell+1)$ while changing all the other riggings to keep the coriggings fixed.
\item[(2)]
Let $\ell$ be the maximal length of the strings of $(\nu^{(a)},J^{(a)})$
which have the rigging $x_\ell$.
Then $\tilde{f}_a(\nu,J)$ is obtained by replacing one of the strings $(\ell,x_\ell)$
by $(\ell+1,x_\ell-1)$ while changing all the other riggings to keep the coriggings fixed.
If the new rigging exceeds the corresponding new vacancy number,
redefine $\tilde{f}_a(\nu,J)=0$.
\end{enumerate}
\end{definition}
The set of all rigged configurations is then obtained by applying
$\tilde{f}_a$ ($a\in I_0$) on the highest weight rigged configurations in all possible ways.

Here some remarks are in order.
In the theory of rigged configurations, the ordering of rows of $\nu^{(a)}$ is not essential.
Thus we can think of a rigged configuration as a set of strings.  Also we remark that the partition $\nu^{(0)}$ of $(\nu,J)$ specifies the shape
of the corresponding tensor product.
More precisely, if we consider $\bigotimes_iB^{1,s_i}$,
then we have $\nu^{(0)}=(s_1,s_2,\ldots,s_m)$.
\begin{example}
Let us consider the following rigged configuration $(\nu,J)$:
\begin{center}
\unitlength 12pt
\begin{picture}(23,6)
\put(0,0){\yng(3,2,2,1,1)}
\put(7,0){
\put(-0.8,4.6){1}
\put(-0.8,3.5){1}
\put(-0.8,2.4){1}
\put(0,2.24){\yng(2,2,1)}
\put(2.6,4.6){1}
\put(2.6,3.5){0}
\put(1.5,2.4){0}}
\put(14,0){
\put(-0.8,4.6){0}
\put(-0.8,3.5){0}
\put(0,3.35){\yng(2,1)}
\put(2.6,4.6){0}
\put(1.5,3.5){0}}
\put(21,0){
\put(-0.8,4.6){0}
\put(0,4.46){\yng(1)}
\put(1.5,4.6){0}}
\end{picture}
\end{center}
Here the partitions correspond to $\nu^{(0)},\ldots,\nu^{(3)}$
from left to right.
For $\nu^{(1)},\nu^{(2)}$ and $\nu^{(3)}$, we put the vacancy number (resp. rigging)
to left (resp. right) of the corresponding row of the diagram.
Note that $(\nu,J)$ is highest weight element.
$\tilde{f}_1(\nu,J)$ is the following rigged configuration:
\begin{center}
\unitlength 12pt
\begin{picture}(23,6)
\put(0,0){\yng(3,2,2,1,1)}
\put(7,0){
\put(-0.8,4.6){1}
\put(-0.8,3.5){0}
\put(-0.8,2.4){1}
\put(0,2.24){\yng(2,3,1)}
\put(2.6,4.6){1}
\put(3.6,3.5){$-1$}
\put(1.5,2.4){0}}
\put(14,0){
\put(-0.8,4.6){0}
\put(-0.8,3.5){0}
\put(0,3.35){\yng(2,1)}
\put(2.6,4.6){0}
\put(1.5,3.5){0}}
\put(21,0){
\put(-0.8,4.6){0}
\put(0,4.46){\yng(1)}
\put(1.5,4.6){0}}
\end{picture}
\end{center}
Let us compute $\tilde{f}_2(\nu,J)$.
First we obtain the following object:
\begin{center}
\unitlength 12pt
\begin{picture}(23,6)
\put(0,0){\yng(3,2,2,1,1)}
\put(7,0){
\put(-0.8,4.6){1}
\put(-0.8,3.5){1}
\put(-0.8,2.4){1}
\put(0,2.24){\yng(2,2,1)}
\put(2.6,4.6){1}
\put(2.6,3.5){0}
\put(1.5,2.4){0}}
\put(14,0){
\put(-1.6,4.6){$-2$}
\put(-0.8,3.5){$0$}
\put(0,3.35){\yng(3,1)}
\put(3.6,4.6){$-1$}
\put(1.5,3.5){$0$}}
\put(21,0){
\put(-0.8,4.6){0}
\put(0,4.46){\yng(1)}
\put(1.5,4.6){0}}
\end{picture}
\end{center}
However $(\nu^{(2)},J^{(2)})$ contains the string $(3,-1)$
and its rigging exceeds the corresponding vacancy number $-2$.
Therefore we have $\tilde{f}_2(\nu,J)=0$.
Finally, $\tilde{f}_3(\nu,J)$ is the following:
\begin{center}
\unitlength 12pt
\begin{picture}(23,6)
\put(0,0){\yng(3,2,2,1,1)}
\put(7,0){
\put(-0.8,4.6){1}
\put(-0.8,3.5){1}
\put(-0.8,2.4){1}
\put(0,2.24){\yng(2,2,1)}
\put(2.6,4.6){1}
\put(2.6,3.5){0}
\put(1.5,2.4){0}}
\put(14,0){
\put(-0.8,4.6){1}
\put(-0.8,3.5){0}
\put(0,3.35){\yng(2,1)}
\put(2.6,4.6){1}
\put(1.5,3.5){0}}
\put(21,0){
\put(-1.6,4.6){$-1$}
\put(0,4.46){\yng(2)}
\put(2.6,4.6){$-1$}}
\end{picture}
\end{center}

In the next section, we will define the bijection between rigged configurations and elements of tensor products of crystals.
Under this bijection, $(\nu,J)$ corresponds to
$\Yvcentermath1\young(111)\otimes\young(22)\otimes\young(13)\otimes\young(4)\otimes\young(3)$.
\qed
\end{example}

\subsection{The rigged configuration bijection}
\label{sec:Phi}
One of the most important constituents of the rigged configuration theory is
a bijection between tensor products of crystals and rigged configurations
\begin{align*}
\Phi: B^{1,s_1}\otimes B^{1,s_2}\otimes\cdots\otimes B^{1,s_m}
\longmapsto\left\{(\nu,J)\,\bigr|\,\nu^{(0)}=(s_1,s_2,\ldots,s_m)\right\}.
\end{align*}
We call $\Phi$ the {\bf rigged configuration bijection}.
We remark that our bijection is a special case of a more general bijection defined for
$B^{r_1,s_1}\otimes B^{r_2,s_2}\otimes\cdots\otimes B^{r_m,s_m}$ \cite{KSS}.

In order to define the rigged configuration bijection,
we introduce an important notion about strings.
If the string $(\nu^{(a)}_i,J^{(a)}_i)$ satisfies the condition
$P^{(a)}_{\nu^{(a)}_i}(\nu)=J^{(a)}_i$ (i.e., if the corresponding corigging is 0),
we say the string is {\bf singular}.
We remark that we always have the relation $P^{(a)}_{\nu^{(a)}_i}(\nu)\geq J^{(a)}_i$
by construction of the rigged configuration in Section \ref{sec:RC}.

Suppose that we are given
\begin{align*}
p=b_1\otimes b_2\otimes\cdots\otimes b_m\in
B^{1,s_1}\otimes B^{1,s_2}\otimes\cdots\otimes B^{1,s_m}.
\end{align*}
The construction of $\Phi(p)$ is a recursive procedure which proceeds from left to right of $p$.
To begin with, we suppose that the empty path corresponds to the
empty rigged configuration.
Suppose that we have constructed
$\Phi(b_1\otimes b_2\otimes\cdots\otimes b_{i-1})$
for some $1\leq i-1<m$.
Suppose that the semistandard tableau representation of $b_i$ is
\begin{align*}
b_i=
\begin{array}{|c|c|c|c|}
\hline
c_{s_i} & \cdots & c_2 &c_1\\
\hline
\end{array}.
\end{align*}
Again, the procedure $\Phi$ is recursive starting from the letter $c_1$.
Suppose that we have constructed up to the letter $c_{k-1}$
and obtained the rigged configuration
\begin{align*}
&\Phi(b_1\otimes\cdots\otimes b_{i-1}\otimes
\begin{array}{|c|c|c|c|}
\hline
c_{k-1} & \cdots & c_2 &c_1\\
\hline
\end{array}
)\\
&=
(\eta,I)=\left\{
(\eta^{(0)}),(\eta^{(1)},I^{(1)}), (\eta^{(2)},I^{(2)}),\ldots,(\eta^{(n-1)},I^{(n-1)})
\right\}.
\end{align*}
Then the next rigged configuration
\[
(\eta',I'):=
\Phi(b_1\otimes\cdots\otimes b_{i-1}\otimes
\begin{array}{|c|c|c|c|}
\hline
c_{k} & \cdots & c_2 &c_1\\
\hline
\end{array}
)
\]
is obtained by the following procedure.
Recall that we assume that $(\eta^{(a)},J^{(a)})$ ($a\in I_0$) contains the empty singular string $(0,0)$.

{\sf Main Algorithm.}
\begin{enumerate}
\item
Look for the largest singular string of $(\eta^{(c_k-1)},I^{(c_k-1)})$.
Let us denote the length of the string by $\ell^{(c_k-1)}$.
\item
Recursively, we do the following procedure for $a=c_k-1, c_k-2, \ldots,2$.
Suppose that we have determined $\ell^{(a)}$.
Then we look for the largest singular string of $(\eta^{(a-1)},I^{(a-1)})$
whose length does not exceed $\ell^{(a)}$.
We denote the length of the latter string by $\ell^{(a-1)}$.
\item
$\eta'$ is obtained by adding one box to one of the singular strings of $(\eta^{(a)},I^{(a)})$ with length $\ell^{(a)}$, for $a\in I_0$.
For $\eta^{(0)}$, we add one box to a length $k-1$ row.
\item
$I'$ is defined in the following way.
If the corresponding row is not changed from $\eta$,
we do not modify the rigging.
Otherwise we choose the new rigging so that the new string becomes singular in $(\eta',I')$.
\end{enumerate}

If we reverse the above procedure, we obtain the algorithm for
the inverse map $\Phi^{-1}$ (see, for example, \cite[Appendix A]{S} for a description
and an example).
A Mathematica implementation of the rigged configuration
bijection is available at the web page \cite{S:web}.

\begin{example}
Let us consider
$p=\Yvcentermath1\young(2)\otimes\young(24)\otimes\young(3)$.
Then the rigged configuration bijection proceeds as follows.
\begin{center}
\unitlength 12pt
\begin{tabular}{rl}
\raisebox{4pt}[0pt][10pt]{
$\Phi(\Yvcentermath1\young(2))=$}&
\begin{picture}(6,1.5)
\put(0,0){\yng(1)}
\put(5,0){
\put(-1.5,0.2){$-1$}
\put(0,0){\yng(1)}
\put(1.3,0.2){$-1$}}
\end{picture}\\
\raisebox{17pt}[0pt][10pt]{
$\Phi(\Yvcentermath1\young(2)\otimes\young(4))=$}&
\begin{picture}(17,2.5)
\put(0,0){\yng(1,1)}
\put(5,0){
\put(-1.5,1.3){$-1$}
\put(-1.5,0.2){$-1$}
\put(0,0){\yng(1,1)}
\put(1.3,1.3){$-1$}
\put(1.3,0.2){$-1$}}
\put(10.5,1.1){
\put(-0.7,0.2){1}
\put(0,0){\yng(1)}
\put(1.4,0.2){1}}
\put(16,1.1){
\put(-1.5,0.2){$-1$}
\put(0,0){\yng(1)}
\put(1.3,0.2){$-1$}}
\end{picture}\\
\raisebox{17pt}[0pt][10pt]{
$\Phi(\Yvcentermath1\young(2)\otimes\young(24))=$}&
\begin{picture}(17,2.5)
\put(0,0){\yng(1,2)}
\put(5,0){
\put(-1.5,1.3){$-2$}
\put(-1.5,0.2){$-1$}
\put(0,0){\yng(2,1)}
\put(2.4,1.3){$-2$}
\put(1.3,0.2){$-1$}}
\put(10.5,1.1){
\put(-0.7,0.2){1}
\put(0,0){\yng(1)}
\put(1.4,0.2){1}}
\put(16,1.1){
\put(-1.5,0.2){$-1$}
\put(0,0){\yng(1)}
\put(1.3,0.2){$-1$}}
\end{picture}\\
\raisebox{30pt}[0pt][10pt]{
$\Phi(\Yvcentermath1\young(2)\otimes\young(24)\otimes\young(3))=$}&
\begin{picture}(18,3.6)
\put(0,0){\yng(1,2,1)}
\put(5,1.1){
\put(-1.5,1.3){$-2$}
\put(-1.5,0.2){$-2$}
\put(0,0){\yng(2,2)}
\put(2.4,1.3){$-2$}
\put(2.4,0.2){$-2$}}
\put(10.5,2.2){
\put(-0.7,0.2){1}
\put(0,0){\yng(2)}
\put(2.5,0.2){1}}
\put(16,2.2){
\put(-1.5,0.2){$-1$}
\put(0,0){\yng(1)}
\put(1.3,0.2){$-1$}}
\end{picture}
\end{tabular}
\end{center}
\qed
\end{example}

Recall that general rigged configurations are defined by applying the Kashiwara operators $\tilde{f}_a$ ($a\in I_0$) to highest-weight rigged configurations.
Concerning the consistency with the rigged configuration bijection $\Phi$,
we have the following fundamental property \cite{DS,S:2014}.

\begin{theorem}
For $a\in I_0$, the Kashiwara operators $\tilde{e}_a$ and $\tilde{f}_a$
commute with the rigged configuration bijection $\Phi$:
$[\tilde{e}_a,\Phi]=[\tilde{f}_a,\Phi]=0$.
\end{theorem}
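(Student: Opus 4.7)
The plan is to verify that $\Phi$ is an isomorphism of $U_q(\sl_n)$-crystals, which immediately yields commutation with both $\tilde e_a$ and $\tilde f_a$ for $a \in I_0$. Since the set of rigged configurations is generated from highest-weight rigged configurations by repeated application of $\tilde f_a$ (per Section~\ref{sec:RC}), and the analogous generation statement holds for the tensor product $B$, it suffices to check: (i) $\Phi$ preserves the classical $\sl_n$-weight; (ii) $\Phi$ restricts to a bijection between highest-weight elements of $B$ and highest-weight rigged configurations; and (iii) for at least one of $\tilde e_a, \tilde f_a$, the commutation with $\Phi$ holds.

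For (i), I would compute the weight on each side directly. The $\sl_n$-weight of $p$ is determined by the letter content $(m_1,\ldots,m_n)$ of the tableaux, while the weight of $(\nu,J)$ is read off from the sizes $|\nu^{(a)}|$ via the standard formula from Bethe-ansatz combinatorics. Tracing through the Main Algorithm, inserting a letter $c_k$ increments $m_{c_k}$ by one and simultaneously enlarges $\nu^{(0)}, \nu^{(1)}, \ldots, \nu^{(c_k - 1)}$ by one box each; the matching follows by an easy induction on the total number of letters inserted.

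For (ii), the highest-weight condition on $p$, unfolded via the tensor product rule for $\tilde e_a$, should match the condition $P^{(a)}_{\nu^{(a)}_i}(\nu) \geq J^{(a)}_i \geq 0$ on all strings of $\Phi(p)$. I would prove this by a parallel induction on the length of $p$: when inserting a new letter, the Main Algorithm selects the \emph{largest} singular string at each level and converts it to a new singular string in $(\eta', I')$; any $a$-signature violation on the tensor side would force some rigging to exceed its vacancy number on the configuration side, and conversely.

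The heart of the argument, and the main obstacle, is (iii). My plan is induction on $m$ combined with the tensor product rule for $\tilde f_a$: the operator acts either on the prefix $b_1 \otimes \cdots \otimes b_{m-1}$ (dispatched by the inductive hypothesis) or on the final factor $b_m$ (in which case I must track one round of the Main Algorithm through the change of a single letter in $b_m$). The delicate case analysis records how altering one letter shifts the chosen singular strings at adjacent levels $a-1$ and $a$, and verifies that adding a box to the minimal-rigging string on the configuration side mirrors exactly the algorithmic output on the path side. Because the vacancy numbers $P^{(a)}_k(\nu)$ change non-locally whenever strings of various lengths are modified, maintaining singularity (or correctly losing it) through the algorithm is the main bookkeeping difficulty; this is ultimately why the full verification is carried out in the dedicated papers \cite{DS,S:2014} rather than in a single short calculation.
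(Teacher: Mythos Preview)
The paper does not actually supply a proof of this theorem: it is stated as a known result with a citation to \cite{DS,S:2014}, and no argument is given in the text. Your proposal is therefore not being compared against a proof in this paper but against the literature it cites; your outline (weight preservation, bijection on highest-weight elements, then an inductive verification that $\tilde f_a$ commutes with one step of the Main Algorithm) is indeed the shape of the argument carried out in those references, and you correctly acknowledge that the delicate case analysis in (iii) is what forces the full proof into a dedicated paper rather than a short computation.

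One small tightening: in (iii) you write ``for at least one of $\tilde e_a,\tilde f_a$,'' but what you need is commutation with $\tilde f_a$ for \emph{every} $a\in I_0$, together with the matching of the vanishing condition $\tilde f_a p = 0 \Leftrightarrow \tilde f_a\Phi(p)=0$; commutation with $\tilde e_a$ then follows since $\tilde e_a$ is the partial inverse of $\tilde f_a$ on its image. With that adjustment your reduction (i)--(iii) is correct.
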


Recall that we identify rigged configurations if the only difference
is the ordering of the rows of $\nu^{(0)}$.
On the other hand, if we swap positions of rectangles in a tensor product
of crystals, we need to apply the combinatorial $R$-matrix.
The following result \cite{KSS} is a very deep and important property
of the rigged configuration bijection $\Phi$.
\begin{theorem}\label{th:combR}
Let $p\in B^{r_1,s_1}\otimes B^{r_2,s_2}\otimes\cdots\otimes B^{r_m,s_m}$
and let $B'$ be an arbitrary reordering of rectangles of $B$.
If $R:p\longmapsto p'\in B'$, we have $\Phi(p)=\Phi(p')$.
\end{theorem}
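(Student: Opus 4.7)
The plan is to reduce the statement in two stages --- first to a single adjacent swap of tensor factors, then to $I_0$-highest weight elements --- and then to settle the highest-weight case directly.

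First, the symmetric group $S_m$ is generated by the adjacent transpositions, and by the Yang--Baxter relation satisfied by the combinatorial $R$-matrices, the map $R : B \to B'$ implementing any reordering of the factors is a well-defined composition of local $R$-matrices, independent of the chosen reduced word. Hence it suffices to prove the statement when $p'$ is obtained from $p$ by swapping two adjacent tensor factors $b_i \otimes b_{i+1}$ via a single local $R$-matrix.

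Next, the combinatorial $R$-matrix is by definition the classical crystal isomorphism, so it commutes with every $\tilde{e}_a$ and $\tilde{f}_a$ for $a \in I_0$; by the preceding theorem, $\Phi$ commutes with the same Kashiwara operators. Writing any $p \in B$ as $p = \tilde{f}_{a_1} \cdots \tilde{f}_{a_k}(h)$ for some $I_0$-highest weight element $h$ and some $a_1, \ldots, a_k \in I_0$, we obtain
\[
\Phi(R(p)) = \tilde{f}_{a_1}\cdots\tilde{f}_{a_k}\, \Phi(R(h)), \qquad \Phi(p) = \tilde{f}_{a_1}\cdots\tilde{f}_{a_k}\,\Phi(h),
\]
so it remains to prove $\Phi(h) = \Phi(R(h))$ for every $I_0$-highest weight $h$.

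For the highest-weight case I would proceed by induction on $m$; combined with the first-paragraph reduction this brings us to the two-factor case $h \in B^{r,s} \otimes B^{r',s'}$, where Theorem \ref{thm:Hdefn} describes $R(h)$ explicitly via Schensted row insertion. Running the Main Algorithm for $\Phi$ on $h$ and on $R(h)$ in parallel, I would track the lengths $\ell^{(a)}$ of the singular strings chosen as each letter is processed, and verify that the final rigged configurations agree. The main obstacle is precisely this combinatorial matching: the algorithm is inherently sequential, whereas the desired identity is symmetric in the two orderings. A useful organizing principle is that on highest-weight inputs the data extracted by the algorithm can be re-expressed purely in terms of Schensted insertion and the energy statistic $H$ of Theorem \ref{thm:Hdefn}, both of which are manifestly $R$-matrix invariant; the cleanest route is likely to state and prove a stronger lemma identifying $\Phi(h)$ with such classical invariants on highest-weight elements, and then extend to all of $B$ by Kashiwara operators as above.
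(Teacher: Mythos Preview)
The paper does not prove this theorem at all; it merely quotes it from \cite{KSS}, calling it ``a very deep and important property'' of $\Phi$. So there is no paper proof to compare against, only the original one in \cite{KSS}.

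Your two reductions are standard and correct: reducing to a single adjacent swap via Yang--Baxter, and reducing to $I_0$-highest weight elements via the commutation $[\tilde f_a,\Phi]=0$. (One should check that the latter, cited from \cite{DS,S:2014}, does not itself rely on $R$-invariance; in fact the original \cite{KSS} argument predates those papers and works directly on highest-weight elements, so there is no circularity if you take that route.)

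The genuine gap is that your third paragraph is a plan, not a proof. You write that you ``would track the lengths $\ell^{(a)}$ \ldots and verify that the final rigged configurations agree'', and then explicitly flag ``the main obstacle is precisely this combinatorial matching''. That matching \emph{is} the theorem: it is a delicate analysis of how the insertion algorithm interacts with the singular-string selection in $\Phi$, and in \cite{KSS} it occupies many pages. Your suggestion to re-express $\Phi(h)$ via Schensted insertion and the energy $H$ is appealing but does not by itself yield the configuration---energy only sees column sums $Q_s(\nu^{(r)})$ (cf.\ Theorem~\ref{th:E=Q}), not the riggings, and the equality of riggings is exactly what remains to be shown. Note also that the Main Algorithm spelled out in this paper is only for one-row factors $B^{1,s_i}$; the statement you are asked to prove concerns arbitrary $B^{r_i,s_i}$, so you would need the full $\Phi$ of \cite{KSS}, which involves additional column-splitting steps that your sketch does not address.
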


We have constructed the rigged configuration bijection in a combinatorial language.
However, an important observation is that the algorithm $\Phi$ itself is an algebraic object in nature.
More precisely, we have the following fundamental result.
Recall that we defined the quantity $E^{r,s}$ in Definition \ref{def:energy}.
\begin{theorem}[\cite{S}]\label{th:E=Q}
Let $p\in B^{r_1,s_1}\otimes B^{r_2,s_2}\otimes\cdots\otimes B^{r_m,s_m}$
and let $(\nu,J)=\Phi(p)$.
Then we have
\begin{align}\label{eq:E=Q}
E^{r,s}(p)=Q_s(\nu^{(r)}).
\end{align}
Recall that $Q_s(\nu^{(r)})$ is the number of boxes in the first $s$ columns of $\nu^{(r)}$.
\end{theorem}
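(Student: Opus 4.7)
The plan is to prove Theorem \ref{th:E=Q} by induction on the total number of boxes $|\nu^{(0)}|=s_1+\cdots+s_m$ in the path, by comparing how both sides evolve as a single box is appended to the rightmost factor of $p$.

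First, I would establish that both sides are invariant under the combinatorial $R$-matrix action permuting the tensor factors of $B$. For the right-hand side this is Theorem \ref{th:combR}. For the left-hand side $E^{r,s}(p)$, invariance under an adjacent swap reduces to the Yang--Baxter equation for combinatorial $R$-matrices: in the diagram (\ref{eq:def_bbs}), pushing the $R$-swap of $b_i\otimes b_{i+1}$ through the fixed top-left entry $u^{r,s}$ only reshuffles the intermediate carrier $u^{(i+1)}$, and the axiomatic definition of $H$ through the four cases (LL), (LR), (RL), (RR) forces the pair-sum $H(u^{(i)}\otimes b_i)+H(u^{(i+1)}\otimes b_{i+1})$ to be preserved. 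This invariance lets me reorder factors freely and, in the induction, concentrate all manipulations at the rightmost factor.

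For the inductive step, the base case of an empty path is trivial. For the step itself, I would replace $b_m$ by the row $\tilde b_m$ obtained by prepending a new letter $c'$. On the right-hand side, the Main Algorithm of Section \ref{sec:Phi} produces lengths $\ell^{(1)}\le\ell^{(2)}\le\cdots\le\ell^{(c'-1)}$ and adds exactly one box to $\nu^{(a)}$ at a row of length $\ell^{(a)}$ for each $a\in\{1,\ldots,c'-1\}$, together with one box added to $\nu^{(0)}$. Hence $Q_s(\nu^{(r)})$ increases by $1$ if $r<c'$ and $\ell^{(r)}<s$ (with the analogous convention for $r=0$), and is unchanged otherwise.

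The main obstacle is matching this indicator with the change in $E^{r,s}$. Since the carriers $u^{(1)},\ldots,u^{(m)}$ depend only on $b_1,\ldots,b_{m-1}$, only the final summand changes, giving $H(u^{(m)}\otimes\tilde b_m)-H(u^{(m)}\otimes b_m)$. By Theorem \ref{thm:Hdefn}, both energies count boxes lying outside a fixed concatenation rectangle in the Schensted-insertion tableau built from $u^{(m)}\otimes\tilde b_m$ and $u^{(m)}\otimes b_m$. The crucial technical lemma I would need is that the lengths $\ell^{(a)}$, defined combinatorially through singular strings in the intermediate rigged configuration $\Phi(b_1\otimes\cdots\otimes b_m)$, coincide exactly with certain row-lengths produced along the Schensted bumping path of the new letter $c'$, in such a way that the newly-created box escapes the rectangle precisely when $r<c'$ and $\ell^{(r)}<s$. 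I expect this correspondence to be proved by an auxiliary simultaneous induction on the letter-by-letter construction of $\Phi$ described in Section \ref{sec:Phi}, tracking in parallel the evolution of singular string lengths and the row-insertion shape, in the spirit of the original KKR analysis underlying \cite{KSS} and of Sakamoto's arguments in \cite{S}.
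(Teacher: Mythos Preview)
The paper does not prove this theorem at all: it is quoted from \cite{S} and used as a black box. So there is no ``paper's own proof'' to compare your outline against; what you are really proposing is a reconstruction of the argument in \cite{S}.

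As an outline your plan is in the right spirit, but two substantive gaps make it fall short of a proof. First, the statement is for arbitrary $p\in B^{r_1,s_1}\otimes\cdots\otimes B^{r_m,s_m}$, while the Main Algorithm you invoke from Section~\ref{sec:Phi} is only written down for one-row factors $B^{1,s_i}$. Your inductive step ``prepend a letter $c'$ to $b_m$'' and the resulting cascade $\ell^{(1)}\le\cdots\le\ell^{(c'-1)}$ are one-row phenomena; for rectangular $B^{r_i,s_i}$ one needs the column-splitting version of $\Phi$ from \cite{KSS}, and the bookkeeping of which $\nu^{(a)}$ gain a box is more intricate. Second, and more seriously, your ``crucial technical lemma'' is the entire content of the theorem. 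You need to identify the carrier $u^{(m)}\in B^{r,s}$, which is produced by threading $u^{r,s}$ through $b_1,\ldots,b_{m-1}$ via combinatorial $R$-matrices, with data read off the rigged configuration $\Phi(b_1\otimes\cdots\otimes b_m)$ so precisely that the single-box energy increment $H(u^{(m)}\otimes\tilde b_m)-H(u^{(m)}\otimes b_m)$ matches the indicator $\mathbf{1}[\,r<c',\ \ell^{(r)}<s\,]$. Nothing in your sketch explains why the shape of $u^{(m)}$ should know about the singular-string lengths $\ell^{(a)}$; this identification is exactly what \cite{S} establishes, and it requires a careful simultaneous induction maintaining an explicit description of the carrier in terms of vacancy numbers and singular strings, not merely ``tracking in parallel'' two recursions. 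Without that, the argument is a restatement of the claim rather than a proof.
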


This formula plays a crucial role in the present paper.
Note that even if we consider the special case $p\in\bigotimes_iB^{1,s_i}$
(as in the present paper), we still need to consider the general
time evolutions $T^{r,s}$.

We remark that in \cite{S}, it is claimed that the relation (\ref{eq:E=Q}) recovers
the combinatorial procedure of $\Phi$.\footnote{More precisely,
the relation (\ref{eq:E=Q}) provides information for entire columns
of each $B^{r,s}$ and does not provide the data for each box of the column.
However, it is shown that this information contains enough data
to determine the corresponding rigged configuration.}
Therefore, not only the configurations $\nu^{(a)}$
but also the procedure $\Phi$ has an algebraic origin.
However, in order to determine the riggings $J^{(a)}$ from the energy functions,
we still need a purely combinatorial procedure.
Thus the origin of the riggings is yet unclear.

\section{Loop symmetric functions}
\label{sec:loop}
For most of the details of this section we refer the reader to \cite{LPloop,L,LP}.  Following \cite{LP}, we use the font $\x_i$ to denote variables in the ``geometric" or ``rational" world, distinguishing such variables from the variables $x_i$ in the ``tropical" or ``combinatorial" world.
Also the upper indices such as $(k)$ in $\x^{(k)}$ are always taken modulo $n$.
For the correspondence between the following constructions and the crystal bases,
see Section \ref{sec:tropical}.

\subsection{Birational $R$-matrices and loop-symmetric functions}
\label{sec:birationalR}
The conventions in this section are identical to those in \cite{LP}.
For two $n$-tuples $(\x^{(1)},\ldots,\x^{(n)})$ and $(\y^{(1)},\ldots,\y^{(n)})$ of variables and $r \in \Z/n\Z$, denote
$$
\kappa_r(\x,\y) = \sum_{s=0}^{n-1} \prod_{t=1}^s \y^{(r+t)}\; \prod_{t=s+1}^{n-1} \x^{(r+t)}.
$$
For example, for $n=4$, one has
\begin{align*}
\kappa_1(\x,\y) &= \x^{(2)}\x^{(3)}\x^{(4)}+\y^{(2)}\x^{(3)}\x^{(4)}+\y^{(2)}\y^{(3)}\x^{(4)}+\y^{(2)}\y^{(3)}\y^{(4)},\\
\kappa_2(\x,\y) &= \x^{(3)}\x^{(4)}\x^{(1)}+\y^{(3)}\x^{(4)}\x^{(1)}+\y^{(3)}\y^{(4)}\x^{(1)}+\y^{(3)}\y^{(4)}\y^{(1)},\\
\kappa_3(\x,\y) &= \x^{(4)}\x^{(1)}\x^{(2)}+\y^{(4)}\x^{(1)}\x^{(2)}+\y^{(4)}\y^{(1)}\x^{(2)}+\y^{(4)}\y^{(1)}\y^{(2)},\\
\kappa_4(\x,\y) &= \x^{(1)}\x^{(2)}\x^{(3)}+\y^{(1)}\x^{(2)}\x^{(3)}+\y^{(1)}\y^{(2)}\x^{(3)}+\y^{(1)}\y^{(2)}\y^{(3)}.
\end{align*}
Now define a rational map 
$$
\R: \Q(\x^{(1)},\ldots,\x^{(n)},\y^{(1)},\ldots,\y^{(n)}) \to \Q(\x^{(1)},\ldots,\x^{(n)},\y^{(1)},\ldots,\y^{(n)})$$ 
by
\begin{equation}\label{E:s}
\R(\x^{(i)})=\y^{(i+1)}\frac{\kappa_{i+1}(\x,\y)}{\kappa_i(\x,\y)} \ \ \ \ \text{and} \ \ \ \  \R(\y^{(i)})=\x^{(i-1)}\frac{\kappa_{i-1}(\x,\y)}{\kappa_i(\x,\y)}.
\end{equation}
We call $\R$ the {\bf {birational $R$-matrix}}, or the geometric $R$-matrix. 

If we are given $m$ sets of variables $\{(\x_j^{(1)},\ldots,\x_j^{(n)}) \mid 1 \leq j \leq m\}$, then we let $\R_k$ denote the rational map acting on $\Q(\x^{(i)}_j)$, 
fixing the sets of variables $\x_j$ for $j \neq k, k+1$, and then applying $\R$ to the variables $\x_k, \x_{k+1}$.

Define the {\bf {elementary loop symmetric functions}} $e_k^{(r)}$ as follows:
\begin{align*}
e_k^{(r)}(\x_1,\x_2,\ldots,\x_m) &= \sum_{1 \leq i_1 < i_2 < \cdots < i_k\leq m} \x_{i_1}^{(r)} \x_{i_2}^{(r+1)} \cdots \x_{i_k}^{(r+k-1)}.
\end{align*}
For example, take $n = 2$ and $m = 3$.  Then
$$
\begin{array}{ll}
e_1^{(1)} = \x_1^{(1)}+\x_2^{(1)}+\x_3^{(1)}  &e_1^{(2)} = \x_1^{(2)}+\x_2^{(2)}+\x_3^{(2)}\\
e_2^{(1)} =\x_1^{(1)}\x_2^{(2)}+\x_2^{(1)}\x_3^{(2)}+\x_1^{(1)}\x_3^{(2)} & 
e_2^{(2)}=\x_1^{(2)}\x_2^{(1)}+\x_1^{(2)}\x_3^{(1)}+\x_2^{(2)}x_3^{(1)}\\
e_3^{(1)}=\x_1^{(1)}\x_2^{(2)}\x_3^{(1)} & e_3^{(2)}=\x_1^{(2)}\x_2^{(1)}\x_3^{(2)}.
\end{array}
$$
By convention, $e_k^{(r)} = 0$ for $k < 0$, and $e_0^{(r)}=1$.  We call the upper index the \defn{color}.  
We denote the subring of  $\Z[\x_1^{(1)},\x_1^{(2)},\ldots,\x_1^{(n)},\x_2^{(1)},\ldots,\x_m^{(n)}]$ generated by the loop elementary symmetric polynomials $e_i^{(k)}$ by 
$\LSym_m$. We call it the ring of {\bf{loop symmetric polynomials}} in $m$ sets of variables.  This ring was introduced in \cite{LPloop} in the context of the theory of total positivity of loop groups.

\begin{theorem}[{\cite[Theorem 4.3]{L}, see also \cite{Y}}]
\label{T:R}
\
\begin{enumerate}
\item
The rational maps $\R_j$ generate a birational action of $S_m$ on $\Q(\x^{(i)}_j)$.
\item
The loop elementary symmetric functions, and thus the ring $\LSym_m$ they generate, are invariants of this action.
\end{enumerate}
\end{theorem}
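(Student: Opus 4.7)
The plan is to separate the two claims and verify each locally. Because $\R_j$ fixes every $\x_k$ with $k \notin \{j, j+1\}$, commutation $\R_j \R_k = \R_k \R_j$ for $|j-k| \geq 2$ is automatic, and the invariance of $e_k^{(r)}$ reduces to the two-variable identity $e_k^{(r)}(\x,\y) = e_k^{(r)}(\R(\x),\R(\y))$ by grouping monomials of $e_k^{(r)}$ according to whether they involve $\x_j$, $\x_{j+1}$, both, or neither. So the content of the theorem is (a) the involution $\R^2 = \mathrm{id}$ in two sets of variables, (b) the braid relation $\R_1 \R_2 \R_1 = \R_2 \R_1 \R_2$ in three sets of variables, and (c) the two-variable invariance identities for each $e_k^{(r)}$.

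My plan for (a) and (c) is direct manipulation of the polynomials $\kappa_r$. The key identity I would prove first is
\[
\y^{(r+1)} \, \kappa_{r+1}(\x,\y) + \x^{(r-1)} \, \kappa_{r-1}(\x,\y) = \bigl(\x^{(r)} + \y^{(r)}\bigr) \, \kappa_r(\x,\y),
\]
verified by expanding each $\kappa$ as its defining sum of $n$ monomials and matching terms after a shift of summation index. Dividing both sides by $\kappa_r$ and using (\ref{E:s}) gives invariance of $e_1^{(r)}$; a short induction on $k$ (or an analogous $k$-fold generalization of the identity) extends this to each $e_k^{(r)}$, using the easy fact that $\R(\x^{(r)}) \R(\y^{(r+1)}) = \x^{(r)} \y^{(r+1)}$ which handles the ``both'' case. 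For the involution, the same identity yields a closed form for $\R(\kappa_i)$ as a monomial multiple of $\kappa_i$, whereupon $\R^2(\x^{(i)})$ unwinds by cancellation of $\kappa$-factors and one obtains $\R^2 = \mathrm{id}$.

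The main obstacle is the braid relation (b). My plan is to compute both $\R_1 \R_2 \R_1$ and $\R_2 \R_1 \R_2$ on each of the nine variables $\x_j^{(i)}$, placing the outputs over a common denominator built from three-variable analogues of the $\kappa$-polynomials. To make the matching tractable, I would introduce tau-type polynomials in $(\x_1, \x_2, \x_3)$ that transform multiplicatively under each $\R_\ell$, so that both triple compositions produce the same canonical normal form. This strategy parallels Yamada's derivation \cite{Y} of the birational $R$-matrix from the piecewise-linear combinatorial $R$-matrix of \cite{HHIKTT}: the braid relation for $\R$ can be viewed as the geometric lift of the known braid relation for the combinatorial $R$-matrix on one-row crystals, a lift guaranteed by the subtraction-free, positive nature of all of the rational expressions involved. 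Once (b) is in place, the standard presentation of $S_m$ assembles (a) and (b) into the full $S_m$-action of part (1), and the identities in (c) give part (2).
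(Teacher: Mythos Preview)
The paper does not prove this theorem: it is quoted verbatim as \cite[Theorem 4.3]{L} (with \cite{Y} as an additional reference) and no argument is supplied. So there is no ``paper's own proof'' to compare against; your proposal is an independent proof outline of a result the authors are simply importing.

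As such an outline, your reduction is the standard one and is correct. The commutation for $|j-k|\geq 2$ is trivial, and your decomposition of $e_k^{(r)}$ into contributions according to whether a monomial uses the index $j$, the index $j+1$, both (necessarily in consecutive slots, giving the invariant product $\x_j^{(c)}\x_{j+1}^{(c+1)}$), or neither, correctly reduces part (2) to the two two-variable identities $\R(\x^{(c)})+\R(\y^{(c)})=\x^{(c)}+\y^{(c)}$ and $\R(\x^{(c)})\R(\y^{(c+1)})=\x^{(c)}\y^{(c+1)}$. Your displayed $\kappa$-identity yields the first of these, and the second is immediate from the defining formula \eqref{E:s}. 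The involution likewise follows from a $\kappa$-computation, though ``$\R(\kappa_i)$ is a monomial multiple of $\kappa_i$'' needs to be stated more carefully (one typically shows $\kappa_i(\R(\y),\R(\x))$ is a monomial multiple of $\kappa_i(\x,\y)$, reflecting that $\R$ exchanges the roles of the two factors).

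One caution: your final sentence about the braid relation being ``guaranteed by the subtraction-free, positive nature'' of the expressions is not a valid argument. Tropicalization goes one way: equal subtraction-free rational functions have equal tropicalizations, but distinct subtraction-free rational functions can tropicalize to the same piecewise-linear function (coefficients are lost). Knowing the combinatorial $R$-matrix satisfies braid does not, by itself, force the geometric $\R$ to. The braid relation for $\R$ genuinely requires the rational computation you sketch in the preceding sentence (or, as in \cite{LPloop,L}, a matrix-factorization interpretation in which it becomes a uniqueness statement). Keep the computational plan; drop the lifting remark as justification.
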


In fact, in \cite[Theorem 4.4]{L} it was announced that {\it {all}} polynomial invariants of this $S_m$ action lie in $\LSym_m$.
\begin{theorem}[Lam-Pylyavskyy, unpublished]
\label{thm:all}
We have
$$
\LSym_m \simeq \Q(\x^{(i)}_j)^{S_m} \cap \Q[\x^{(i)}_j].
$$
\end{theorem}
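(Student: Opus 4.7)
The containment $\LSym_m \subseteq \Q(\x^{(i)}_j)^{S_m} \cap \Q[\x^{(i)}_j]$ is immediate from Theorem \ref{T:R}(2), so the task is to prove the reverse inclusion: every polynomial $S_m$-invariant must be a polynomial in the loop elementary symmetric functions.

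My plan is first to establish the field-theoretic version $\Q(\x^{(i)}_j)^{S_m} = \Q(e_k^{(r)})$ and then upgrade it to a statement about polynomial rings. For the field version, since $|S_m| = m!$ and the birational action on $\Q(\x^{(i)}_j)$ is faithful, Artin's theorem gives $[\Q(\x^{(i)}_j) : \Q(\x^{(i)}_j)^{S_m}] = m!$. I would then verify that the $nm$ functions $e_k^{(r)}$ are algebraically independent over $\Q$, most cleanly by a Jacobian computation at a generic point (an auxiliary specialization such as $\x_j^{(r)} = y_j$ reducing each $e_k^{(r)}$ to the classical $e_k(y_1,\ldots,y_m)$ is useful for sanity checking but does not by itself prove independence across colors). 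This identifies $\Q(e_k^{(r)})$ as a subfield of the invariant field of transcendence degree $nm$, and comparing transcendence degrees with the Galois-theoretic equality $\mathrm{trdeg}_\Q \Q(\x^{(i)}_j)^{S_m} = nm$ forces $\Q(e_k^{(r)}) = \Q(\x^{(i)}_j)^{S_m}$.

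For the polynomial version, given a polynomial invariant $f$, the field equality lets us write $f = P(e)/Q(e)$ with $P, Q \in \LSym_m = \Q[e_k^{(r)}]$ coprime; the goal is to show that $Q$ is a unit. The natural route is via normality: $\LSym_m$ is a polynomial ring by the algebraic independence established above, hence a UFD and integrally closed in $\Q(e_k^{(r)})$. It therefore suffices to prove that $f$ is integral over $\LSym_m$, and this in turn reduces to showing that each generator $\x^{(i)}_j$ satisfies a monic polynomial with coefficients in $\LSym_m$ --- the natural candidate being $\prod_{w \in S_m} (X - w \cdot \x^{(i)}_j)$.

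The hard step is this integrality. Because the $S_m$-action on $\Q(\x^{(i)}_j)$ is birational rather than polynomial, each orbit element $w \cdot \x^{(i)}_j$ is \emph{a priori} only a rational function in the $\x$'s with denominators assembled from the $\kappa_i$ of Section \ref{sec:birationalR}, and the coefficients of the orbit product may at first appear rational. One must verify that these denominators cancel to produce genuinely polynomial expressions in the $\x$'s; I would attempt this by an inductive analysis over a reduced word for $w$, tracking the cancellation of $\kappa_i$-denominators across simple reflections using the explicit formulas (\ref{E:s}). A potentially cleaner alternative that sidesteps integrality entirely is to compare multigraded Hilbert series: both $\LSym_m$ and $\Q[\x^{(i)}_j]^{S_m}$ are graded by colored multidegree $(d^{(1)},\ldots,d^{(n)})$, and if the Hilbert series of the former (computed from the known loop Schur basis) matches a Molien-type series for the latter, then the inclusion $\LSym_m \subseteq \Q[\x^{(i)}_j]^{S_m}$ forces equality grade by grade. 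I expect one of these two comparisons --- the explicit denominator cancellation or the Hilbert series match --- to be the technical crux of any clean proof.
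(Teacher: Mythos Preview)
The paper does not actually prove Theorem~\ref{thm:all}: it is stated as an unpublished result of Lam and Pylyavskyy (announced as \cite[Theorem 4.4]{L}) and is used in the paper only as a black box, to deduce Theorem~\ref{thm:cylindric} from Proposition~\ref{prop:field}. So there is no ``paper's own proof'' to compare your proposal against.

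That said, your outline has genuine gaps at two of its three steps.

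\textbf{Field equality.} Equality of transcendence degrees does \emph{not} force $\Q(e_k^{(r)}) = \Q(\x^{(i)}_j)^{S_m}$. All you get is that $\Q(\x^{(i)}_j)$ is finite over $\Q(e_k^{(r)})$; Artin's theorem gives $[\Q(\x^{(i)}_j):\Q(\x^{(i)}_j)^{S_m}]=m!$, but you still need to show $[\Q(\x^{(i)}_j):\Q(e_k^{(r)})]\leq m!$ (equivalently, that the generically finite map $(\x^{(i)}_j)\mapsto (e_k^{(r)})$ has degree exactly $m!$). That is a separate, nontrivial computation, and your proposal does not address it.

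\textbf{Integrality.} The step you flag as ``the hard step'' is, as written, circular. Showing that the coefficients of $\prod_{w\in S_m}(X-w\cdot \x^{(i)}_j)$ are \emph{polynomial in the $\x$'s} only places them in $\Q[\x^{(i)}_j]^{S_m}$, not in $\LSym_m$; but $\Q[\x^{(i)}_j]^{S_m}=\LSym_m$ is precisely the theorem. To run your normality argument you must exhibit a monic relation for $\x^{(i)}_j$ with coefficients already known to lie in $\LSym_m$ (e.g.\ explicit polynomials in the $e_k^{(r)}$), and no such relation is produced.

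\textbf{Hilbert series alternative.} Molien's formula computes the invariant Hilbert series for a \emph{linear} action. The $S_m$-action here is genuinely birational (the $\R_j$ involve the denominators $\kappa_i$), so there is no off-the-shelf Molien-type expression for the Hilbert series of $\Q[\x^{(i)}_j]^{S_m}$; indeed it is not a priori clear that $\Q[\x^{(i)}_j]^{S_m}$ is even finitely generated. This alternative therefore does not sidestep the difficulty.
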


\subsection{Loop Schur functions and cylindric loop Schur functions}

A square $s = (i,j)$ in the $i$-th row (counting from top to bottom) and $j$-th column (counting from left to right) has {\bf{content}} $c(s)=i-j$.
Our notion of content is the negative of the usual one.

Let $\rho/\nu$ be a skew shape.
A {\bf semistandard Young tableau} $T$ of shape
$\rho/\nu$ is a filling of each square $s \in \rho/\nu$ with an
integer $T(s) \in \Z_{> 0}$ so that the rows are weakly-increasing and the columns are strictly increasing.  
For $r\in \Z/n\Z$, the {\bf {$r$-weight}} $\x^{\wt^{(r)}(T)}$ of a 
tableaux $T$ is
\begin{align}\label{def:r-weight}
\x^{\wt^{(r)}(T)} = \prod_{s \in \rho/\nu}\x_{T(s)}^{(c(s)+r)}.
\end{align}

We shall draw our shapes and tableaux in English notation.
Then the definition (\ref{def:r-weight}) gives the following correspondence:
\begin{align}\label{eq:tableau_weight}
\begin{array}{|c|c|c|c|c}
\hline
T(1,1)&T(1,2)&T(1,3)&T(1,4)&\cdots\\
\hline
T(2,1)&T(2,2)&T(2,3)&T(2,4)&\cdots\\
\hline
T(3,1)&T(3,2)&T(3,3)&T(3,4)&\cdots\\
\hline
T(4,1)&T(4,2)&T(4,3)&T(4,4)&\cdots\\
\hline
\vdots&\vdots&\vdots&\vdots&\ddots
\end{array}
\longmapsto
\begin{array}{|c|c|c|c|c}
\hline
\x^{(r)}_{T(1,1)}&\x^{(-1+r)}_{T(1,2)}&\x^{(-2+r)}_{T(1,3)}&\x^{(-3+r)}_{T(1,4)}&\cdots\\
\hline
\x^{(1+r)}_{T(2,1)}&\x^{(r)}_{T(2,2)}&\x^{(-1+r)}_{T(2,3)}&\x^{(-2+r)}_{T(2,4)}&\cdots\\
\hline
\x^{(2+r)}_{T(3,1)}&\x^{(1+r)}_{T(3,2)}&\x^{(r)}_{T(3,3)}&\x^{(-1+r)}_{T(3,4)}&\cdots\\
\hline
\x^{(3+r)}_{T(4,1)}&\x^{(2+r)}_{T(4,2)}&\x^{(1+r)}_{T(4,3)}&\x^{(r)}_{T(4,4)}&\cdots\\
\hline
\vdots&\vdots&\vdots&\vdots&\ddots
\end{array}
\end{align}
For example, let $n=3$. Then the $1$-weight of the following tableau is computed as
\[
T=
\tableau[mY]{
\bl \circ&\bl \circ&\bl \circ&1&1&1&3\\
\bl \circ&1&2&2&3&4\\
\bl \circ&3&3&4}
\longmapsto
\tableau[mY]{
\bl \circ&\bl \circ&\bl \circ&\x_1^{(1)}&\x_1^{(3)}&\x_1^{(2)}&\x_3^{(1)}\\
\bl \circ&\x_1^{(1)}&\x_2^{(3)}&\x_2^{(2)}&\x_3^{(1)}&\x_4^{(3)}\\
\bl \circ&\x_3^{(2)}&\x_3^{(1)}&\x_4^{(3)}}
\]
Thus $\x^{\wt^{(1)}(T)}=(\x_1^{(1)})^2 (\x_3^{(1)})^3 \x_1^{(2)} \x_2^{(2)} \x_3^{(2)} \x_1^{(3)} \x_2^{(3)} (\x_4^{(3)})^2$.
We define the {\bf {loop skew Schur function}} by
$$
s^{(r)}_{\lambda/\mu}({\x}) = \sum_{T} \x^{\wt^{(r)}(T)}
$$
where the summation is over all semistandard Young tableaux of
(skew) shape $\lambda/\mu$ using the letters $1,2,\ldots,m$.

\begin{example}\label{ex:standard_LSchur}
Let $n = 3$ and $m=3$.
Then we have
\begin{align*}
s^{(1)}_{2,1}(\x_1,\x_2,\x_3) = &\,\, \x_1^{(1)}\x_1^{(3)}\x_2^{(2)} + \x_1^{(1)}\x_2^{(3)}\x_2^{(2)} + \x_1^{(1)}\x_2^{(3)}\x_3^{(2)} +
\x_1^{(1)}\x_3^{(3)}\x_2^{(2)} + \\
&\,\,\x_1^{(1)}\x_1^{(3)}\x_3^{(2)} +
\x_2^{(1)}\x_2^{(3)}\x_3^{(2)} +
\x_1^{(1)}\x_3^{(3)}\x_3^{(2)} +
\x_2^{(1)}\x_3^{(3)}\x_3^{(2)},
\end{align*}
corresponding to the tableaux
$$
\tableau[sY]{1&1\\2} \qquad \tableau[sY]{1&2\\2} \qquad \tableau[sY]{1&2\\3} \qquad \tableau[sY]{1&3\\2}
$$
$$
\tableau[sY]{1&1\\3} \qquad \tableau[sY]{2&2\\3} \qquad \tableau[sY]{1&3\\3} \qquad \tableau[sY]{2&3\\3}
$$
where the upper left corner has content 0, and so gives a $1$-weight with color $1$.  Setting $\x_i^{(1)}=\x_i^{(2)} =\x_i^{(3)} = \x_i$ gives the usual Schur polynomial $s_{2,1}(\x_1,\x_2,\x_3)$.
\qed
\end{example}

In our definition of cylindric loop Schur functions we stay close to \cite{M}. Define the {\bf {cylinder}} $\C_s$ to be the following quotient of integer lattice:
$$\C_s = \Z^2/(n-s,s)\Z.$$ In other words, $\C_s$ is the quotient of $\Z^2$ by the shift that sends $(a,b)$ into $(a+n-s,b+s)$. The set $\C_s$ inherits a natural partial order from that on $\Z^2$ given by the transitive closure of the cover relations $(a,b)<(a+1,b)$ and $(a,b)<(a,b-1)$.  A box in the $i$-th row and $j$-th column of the Young diagram has coordinates $(j,-i)$.

We define a {\bf {cylindric skew shape}} $D$ to be a finite convex subposet of $\C_s$.  We shall also often identify cylindric skew shapes $D$ with infinite skew shapes periodic under shifts by the vector $(n-s,s)$.  The content of the coordinate $(a,b)$ in $\Z^2$ is given by $-a-b$.  Note that the notion of color is still well-defined on $\C_s$ as a shift by $(n-s,s)$ preserves the content modulo $n$.

The notion of semistandard tableau extends in an obvious way to periodic fillings of cylindric shapes: a semistandard Young tableau of a cylindric skew shape $D$ is a map $T: D \to \Z_{>0}$ satisfying $T(a,b)\leq T(a+1,b)$ and $T(a,b)<T(a,b-1)$ whenever the corresponding boxes lie in $D$.
For a cylindric skew shape $D$, we define the {\bf {cylindric loop Schur function}} by
\begin{align*}
s^{(r)}_{D}({x}) = \sum_{T} x^{\wt^{(r)}(T)},
\end{align*}
where the summation is over all semistandard Young tableaux of cylindric skew shape $D$.  Note that in this definition we think of $D$ as a finite subset of $\C_s$ (otherwise, the monomial $x^{\wt^{(r)}(T)}$ would have infinite degree).  By convention, $s^{(r)}_D(x) = 1$ if $D$ is the empty set. 

\begin{theorem}\label{thm:cylindric}
Let $D$ be a cylindric skew shape.  Then the cylindric loop Schur function $s^{(r)}_{D}(\x_1,\x_2,\ldots,\x_m)$ lies in $\LSym_m$.
\end{theorem}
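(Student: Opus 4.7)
The plan is to reduce the theorem to the fact that the loop elementary symmetric functions $e_k^{(r)}$ generate $\LSym_m$. By Theorem~\ref{thm:all}, $\LSym_m$ is precisely the ring of polynomial invariants of the birational $S_m$-action generated by the $\R_k$ on $\Q(\x_j^{(i)})$. Since $s_D^{(r)}$ is manifestly polynomial---a finite sum of monomials, one for each semistandard filling of the finite poset $D$ with entries in $\{1,\ldots,m\}$---it suffices to verify that $s_D^{(r)}$ is fixed by each adjacent generator $\R_k$, $1 \leq k \leq m-1$.

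My main approach is to derive a cylindric Jacobi--Trudi-type expansion of $s_D^{(r)}$ in terms of the $e_j^{(r')}$ via a Lindstr\"om--Gessel--Viennot (LGV) argument on the cylinder $\C_s$. In the classical (non-loop) setting, McNamara and Postnikov establish such expansions by encoding a semistandard tableau of $D$ as a family of non-intersecting lattice paths on $\C_s$, with each path tracing out a column (or row) of $D$, and with single-path weights that recover elementary symmetric functions. I would lift this construction to the loop setting by assigning to each step of such a path a variable $\x_i^{(r')}$ whose color is determined by the content, modulo $n$, of the step's location on $\C_s$. The single-path generating functions then reproduce the loop elementary symmetric functions $e_j^{(r')}$ with the correct color shifts, dictated by the fact that contents shift by $1$ under each step on $\C_s$ (as in the tableau--weight correspondence (\ref{eq:tableau_weight})). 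Combining paths via a cylindric LGV lemma expresses $s_D^{(r)}$ as a (possibly signed) polynomial combination of products of the $e_j^{(r')}$, and invariance under each $\R_k$ follows immediately from Theorem~\ref{T:R}(2).

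The principal technical obstacle is the cylindric wrap-around: paths on $\C_s$ can wind nontrivially around the cylinder, and the naive LGV lemma must be replaced by its cylindric analogue, in which winding numbers and accompanying signs must be tracked carefully. A secondary concern is confirming that the color shifts propagate correctly around a winding path so that the LGV output is a genuine polynomial (here the finiteness of $D$ and the restriction to the alphabet $\{1,\ldots,m\}$ are essential). A cleaner alternative, should the signed-sum bookkeeping become unwieldy, is to construct a geometric Bender--Knuth-style involution on cylindric semistandard tableaux that directly implements the action of $\R_k$; the difficulty then shifts to defining the involution consistently on rows that cross the periodic boundary of $D$, in a manner compatible with the explicit rational formulas (\ref{E:s}) defining $\R$.
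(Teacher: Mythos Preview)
Your route is genuinely different from the paper's. Both reduce, via Theorem~\ref{thm:all}, to proving $S_m$-invariance, but the paper does not attempt a Jacobi--Trudi expansion on $\C_s$. Instead it passes to a separate grid network $N(n,m)$ on a cylinder ($n$ horizontal wires, $m$ closed vertical loops), realizes $s_D^{(r)}$ as the weight generating function of families of non-crossing \emph{highway paths} with prescribed sources, sinks, and homotopy classes (Proposition~\ref{prop:noncross}), and then checks that such generating functions are preserved by the local Yang--Baxter and crossing merge/removal moves which, by \cite{LPsurf}, realize the geometric $R$-matrix action (Lemmas~\ref{lem:Smnetwork} and~\ref{lem:measurements}). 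No determinantal identity appears; invariance is a short case analysis at a single vertex.

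Your LGV plan, if completed, would actually buy more: an explicit signed expansion of $s_D^{(r)}$ in the $e_k^{(r')}$, which places $s_D^{(r)}$ in $\LSym_m$ by definition and makes the detour through Theorem~\ref{thm:all} and Theorem~\ref{T:R}(2) unnecessary. The cost is exactly the obstacle you flag. On $\C_s$ the LGV sign-reversing involution leaves behind non-intersecting families indexed by affine permutations with arbitrary winding, so what you need is a Gessel--Krattenthaler-type cylindric determinant identity in the loop-colored setting, together with the observation that finitely many variables truncate it to a finite sum. You have correctly located this as the crux but have not supplied it; that step is the entire content of your proposed proof. The paper's local-moves argument sidesteps this difficulty entirely, at the price of yielding no formula in the generators.
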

The proof of Theorem \ref{thm:cylindric} is given in Section \ref{sec:lsymproof}.

Let $\lambda/\mu$ be a skew partition with the longest row $\lambda_1 = n-s$.  Define $\D_s(\lambda/\mu)$ to be the set of squares obtained by periodically propagating $\lambda/\mu$ via a shift by $(n-s,s)$. Note that only for certain $\lambda/\mu$ is $\D_s(\lambda/\mu)$ a well-defined cylindric skew shape; however, every cylindric skew shape $D$ is equal to $\D_s(\lambda/\mu)$ for some skew partition $\lambda/\mu$.
 If $\lambda_1<n-s$, we have $s^{(r)}_{\D_s(\lambda/\mu)}=s^{(r)}_{\lambda/\mu}$. Therefore, the cylindric loop Schur function is a generalization of the loop Schur function.

\begin{example}
\label{ex:cylindric_Schur}
For $n=3$, $s=1$ and $\lambda = (2,1)$, the corresponding cylindric shape $\D_1(\lambda)$
in $\C_1$ is as follows:
\[
\tableau[sY]{
\bl&\bl&\bl&\bl&\bl& & &\bl\cdots\\
\bl&\bl&\bl& & &\\
\bl& & &\\
\bl\cdots&}
\]
Here the shift is $(n-s,s)=(2,1)$.

Let $m=3$ as in Example \ref{ex:standard_LSchur}.
Then we have
\begin{align*}
s^{(1)}_{\D_1(2,1)}(\x_1,\x_2,\x_3) =
&\,\, \x_1^{(1)}\x_1^{(3)}\x_2^{(2)} + \x_1^{(1)}\x_2^{(3)}\x_2^{(2)} + \x_1^{(1)}\x_2^{(3)}\x_3^{(2)} + \\
&\,\,\x_1^{(1)}\x_1^{(3)}\x_3^{(2)} +
\x_2^{(1)}\x_2^{(3)}\x_3^{(2)} +
\x_1^{(1)}\x_3^{(3)}\x_3^{(2)} +
\x_2^{(1)}\x_3^{(3)}\x_3^{(2)}.
\end{align*}
For example, the first three tableaux listed in
Example \ref{ex:standard_LSchur} correspond to the following cylindric semistandard tableaux:
\[
\tableau[sY]{
\bl&\bl&\bl&\bl&\bl&1&1&\bl\cdots\\
\bl&\bl&\bl&1&1&2\\
\bl&1&1&2\\
\bl\cdots&2},
\tableau[sY]{
\bl&\bl&\bl&\bl&\bl&1&2&\bl\cdots\\
\bl&\bl&\bl&1&2&2\\
\bl&1&2&2\\
\bl\cdots&2},
\tableau[sY]{
\bl&\bl&\bl&\bl&\bl&1&2&\bl\cdots\\
\bl&\bl&\bl&1&2&3\\
\bl&1&2&3\\
\bl\cdots&3}.
\]
On the other hand, extending the fourth tableaux from Example \ref{ex:standard_LSchur} periodically we get
\[
\tableau[sY]{
\bl&\bl&\bl&\bl&\bl&1&3&\bl\cdots\\
\bl&\bl&\bl&1&3&2\\
\bl&1&3&2\\
\bl\cdots&2}.
\]
Since this is not a valid cylindrical semistandard tableau,
it does not contribute to $s^{(1)}_{\D_1(2,1)}$.
We can check that the rest of the tableaux do contribute to $s^{(1)}_{\D_1(2,1)}$.
\qed
\end{example}

\subsection{Proof of Theorem \ref{thm:cylindric}} \label{sec:lsymproof}

Consider a {\it {grid network}} $N(n,m)$ on a cylinder, consisting of $n$ directed wires running from one boundary to the other, without crossing, and $m$ directed wires forming closed loops around the cylinder,
crossing the former $n$ wires right to left. The case of $n=3$, $m=2$ is shown in Figure \ref{fig:rig1}. We assign variables $\x_i^{(j)}$ as {\it {weights}} to the crossings of wires 
(or {\it {vertices}}) as follows. 
\begin{figure}[h!]
    \begin{center}
    \input{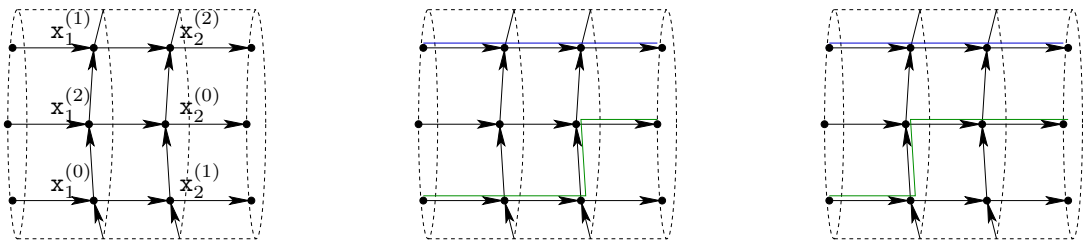}
    \end{center}
    \caption{The grid network $N(3,2)$ on a cylinder.}
    \label{fig:rig1}
\end{figure}
The $i$-th of the $m$ closed loops carries all the $\x_i^{(j)}$, $j=1, \ldots, n$. The color $j$ increases along the $n$ directed wires and decreases along the $m$ 
directed closed loops.

The left endpoints of the $n$ horizontal wires are called {\it {sources}} and denoted $0,1,\ldots,(n-1)$, and the right endpoints are called {\it {sinks}} and denoted $0',1',\ldots,(n-1)'$.  We fix the labeling conventions so that the first crossing to the right (resp. left) of the source (resp. sink) $j$ is labeled $\x_1^{(j)}$ (resp. $\x_m^{(j)}$).
We shall consider the \defn{highway paths} of \cite{LPsurf} that start at sources and end at sinks. There are three allowed ways for a highway path to pass through a vertex, shown in Figure \ref{fig:rig2}.  
\begin{figure}[h!]
    \begin{center}
    \input{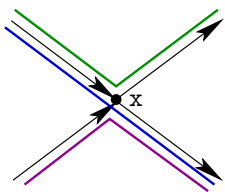}
    \end{center}
    \caption{The three allowed ways for a highway path to pass through a vertex.}
    \label{fig:rig2}
\end{figure}
At each such crossing a path picks up {\it the weight $1$ if it turned, or the weight $x$ of this vertex if it went through}.  The {\it weight} of a path $p$ is the product of weights it picks up at each of the vertices it passes through. The weight of a family of paths is the product of the weights of each path in the family. 

\begin{example}
The weight of the middle family of paths in Figure \ref{fig:rig1} is ${\textcolor{green} {\x_1^{(0)}} \textcolor{blue} {\x_1^{(1)} \x_2^{(2)}}}$.  The weight of the right family of paths in Figure \ref{fig:rig1} is
${\textcolor{green} {\x_2^{(0)}} \textcolor{blue} {\x_1^{(1)} \x_2^{(2)}}}$.
\qed
\end{example}

We call two highway paths \defn{non-crossing} if they do not share common edges.  We remark that the definition of highway paths already forbids two such paths crossing each other at a common vertex.  We call a family of paths non-crossing if any two paths in it are non-crossing.

Let $D = \D_s(\lambda/\mu)$ be a cylindric skew shape belonging to the cylinder $n-s$.  Thus the conjugate partitions $\lambda' = (\lambda'_1,\lambda'_2,\ldots,\lambda'_{n-s})$ and $\mu' = (\mu'_1,\mu'_2,\ldots,\mu'_{n-s})$ have $n-s$ parts (some of these parts can be zero).  
Let $r \in \Z/n\Z$.  Define an ordered $(n-s)$-tuple $S$ of the source vertices $\{0,1,\ldots,n-1\}$ by 
$$
S = (\mu'_1+r,\mu'_2-1+r,\mu'_3-2+r,\ldots,\mu'_i-(i-1)+r,\ldots,\mu'_{n-s}-(n-s-1)+r)
$$
where all numbers are taken in $\Z/n\Z$. In view of the correspondence (\ref{eq:tableau_weight}), the colors for the weights corresponding to the first boxes of the columns of $\lambda /\mu$ are exactly $S$. The condition that $\D_s(\lambda/\mu)$ is a valid cylindric skew shape implies that $\mu'_1 < \mu'_{n-s}+s+1=\mu'_{n-s}-(n-s-1) + n$, so that the ordered subset $S$ does not ``wrap" completely around the cylinder containing $N(n,m)$.
Thus for each column (the first column, in the following diagram) of the tableau $\lambda/\mu$, we consider the following type of a correspondence between the weight (\ref{eq:tableau_weight}) and a path on the following part of the grid on a cylinder.
\begin{center}
\unitlength 12pt
\begin{picture}(26,10)
\multiput(-0.2,0.2)(3.0,0){2}{\line(0,1){9}}
\multiput(-0.2,9.2)(0,-1.5){5}{\line(1,0){3}}
\put(0,8){$x^{(\nu)}_*$}
\put(0,6.5){$x^{(\nu+1)}_*$}
\put(0,5.0){$x^{(\nu+2)}_*$}
\put(0,3.5){$x^{(\nu+3)}_*$}
\multiput(1.3,2.8)(0,-0.3){8}{\circle*{0.1}}
\put(4,5){$\longmapsto$}
\put(6.5,1){$\nu:$}
\put(8.3,1.2){\circle*{0.5}}
\put(9,1.2){\vector(1,0){2}}
\put(11.2,1){$x^{(\nu)}_1$}
\put(11.5,2.2){\vector(0,1){2}}
\put(11.2,4.7){$x^{(\nu-1)}_1$}
\multiput(13.5,1.2)(0,3.6){2}{\vector(1,0){2}}
\put(15.9,1){$x^{(\nu+1)}_2$}
\put(15.9,4.7){$x^{(\nu)}_2$}
\put(15.9,8.4){$x^{(\nu-1)}_2$}
\multiput(16.3,2.2)(0,3.7){2}{\vector(0,1){2}}
\multiput(18.3,1.2)(0,3.6){3}{\vector(1,0){2}}
\put(20.7,1){$x^{(\nu+2)}_3$}
\put(20.7,4.7){$x^{(\nu+1)}_3$}
\put(20.7,8.4){$x^{(\nu)}_3$}
\multiput(21,2.2)(0,3.7){2}{\vector(0,1){2}}
\multiput(23.3,1.2)(0.3,0){8}{\circle*{0.1}}
\multiput(23.3,4.9)(0.3,0){8}{\circle*{0.1}}
\multiput(23.3,8.6)(0.3,0){8}{\circle*{0.1}}
\end{picture}
\end{center}
Here we set $r=0$ and $\nu:=\mu'_1$.  Then the path starts from the bullet point (source) indicated by $\nu$.

Similarly, define an ordered $(n-s)$-tuple $R$ of the sink vertices $\{0',1',\ldots,(n-1)'\}$ by
$$
R = ((\lambda'_1-1+r)',(\lambda'_2-2+r)',(\lambda'_3-3+r)',\ldots,(\lambda'_i-i+r)',\ldots,(\lambda'_{n-s}-(n-s)+r)').
$$

For a highway path $p$ in $N(n,m)$ connecting a source $s$ to a sink $r$, let $\ell(p)$ be the degree of the weight of $p$.  Equivalently, $\ell(p)$ is the number of interior vertices that $p$ goes through (and does not turn).

\begin{proposition} \label{prop:noncross}
There is a weight-preserving bijection between families $(p_1,p_2,\ldots,p_{n-s})$ of non-crossing paths in $N(n,m)$, where $p_i$ connects $S_i$ to $R_i$ and has weight of degree $\ell(p_i) = \#\{\text{boxes in $i$-th column of $D$}\}$, and semistandard Young tableau of the cylindric skew shape $\D_s(\lambda/\mu)$ with the numbers $1,2,\ldots,m$.
\end{proposition}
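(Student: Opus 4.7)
The plan is to construct an explicit weight-preserving bijection column by column. Given a non-crossing family $(p_1,\ldots,p_{n-s})$, I extract from each $p_i$ the ordered sequence of vertices at which $p_i$ goes straight through (rather than turning); by the weight convention these are exactly the vertices contributing to the degree, so there are $\ell(p_i)$ of them, matching the number of boxes in column $i$ of $\D_s(\lambda/\mu)$. Writing the sequence of weights picked up by $p_i$ as $\x_{k_1}^{(c_1)},\ldots,\x_{k_{\ell_i}}^{(c_{\ell_i})}$, I would verify from the network structure that the colors $c_1,\ldots,c_{\ell_i}$ are forced to form an arithmetic progression modulo $n$ with step $+1$, beginning at $S_i$ and ending at $R_i$: between two consecutive straight-through vertices the path traverses a turning detour through one loop and re-emerges on the next horizontal wire, incrementing the color by one. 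Combined with the formulas for $S_i$ and $R_i$, this color sequence matches exactly the content-colors $r+c(s)$ of the $\ell_i$ boxes of column $i$ of $\D_s(\lambda/\mu)$ read top to bottom, so I may unambiguously place the entry $k_t$ in the $t$-th box of column $i$.

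Next I would verify that the resulting filling is semistandard and construct the inverse. Strict increase $k_1<k_2<\cdots<k_{\ell_i}$ within a column follows from the highway-path rules: after the path turns onto loop $k$ and returns to a horizontal wire, any subsequent straight-through vertex must lie on a loop of strictly larger subscript, because along any rightward horizontal segment the loops are visited in increasing order of index. Conversely, given a semistandard cylindric tableau $T$, one constructs $p_i$ as the unique highway path whose straight-through vertices are $\x_{T(\mu'_i+t,i)}^{(c_t)}$ in order, connected by the forced turning segments; column-strictness of $T$ makes this construction valid. Both maps are weight-preserving by construction.

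The central step is the equivalence between non-crossing of $(p_1,\ldots,p_{n-s})$ and row-weak-increase between adjacent columns of $T$. I would argue by local analysis at each shared row: $p_i$ and $p_{i+1}$ share an edge precisely when there is a row common to both columns in which the column-$i$ entry strictly exceeds the column-$(i+1)$ entry, in which case the two paths are forced to traverse a common loop segment between their respective straight-through vertices. Conversely, row-weak-increase allows $p_{i+1}$'s straight-through vertex to lie weakly to the right of $p_i$'s in every shared row, permitting the paths to be drawn without shared edges.

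The main obstacle will be handling the cylindric geometry carefully. Unlike the planar Lindström--Gessel--Viennot setting, paths on a cylinder can wind around the closed loops, so the non-crossing condition is more delicate and a naive case analysis could miss global obstructions. The hypothesis that $\D_s(\lambda/\mu)$ is a valid cylindric skew shape --- equivalently $\mu'_1<\mu'_{n-s}+s+1$, which is precisely the condition that the ordered source set $S$ does not wrap fully around $N(n,m)$ --- is exactly what is needed to rule out pathological winding and reduce the non-crossing condition to the row-by-row local analysis above, ensuring the bijection is well-posed.
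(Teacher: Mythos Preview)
Your approach is essentially the same as the paper's: both construct the bijection column by column, identifying the straight-through vertices of $p_i$ with the entries of column $i$ of $T$ (the paper builds paths from the tableau, you go the other direction first, but the map is identical), and both reduce the weight-preserving and semistandard checks to the same local analysis.

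One clarification on your final paragraph: you conflate two distinct cylindric ingredients. The shape condition $\mu'_1<\mu'_{n-s}+s+1$ only ensures the source tuple $S$ is well-posed (does not wrap completely around $N(n,m)$); it does not by itself control crossings. What governs non-crossing of the wraparound pair $(p_{n-s},p_1)$ is the cylindric semistandard condition on the filling $T$ --- the extra row inequality between column $n{-}s$ and column $1$ after the $(n{-}s,s)$ shift. The paper treats this pair explicitly, noting that the shift preserves color modulo $n$, so the same adjacent-column argument applies verbatim to $(p_{n-s},p_1)$; your write-up should make that step explicit rather than folding it into the shape hypothesis.
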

\begin{proof}
Let $T: D \to \{1,2,\ldots,m\}$ be a cylindric semistandard Young tableau of shape $D$.  Let $C_i$ be the $i$-th column of $D$.  Then $T(C_i) \subset \{1,2,\ldots,m\}$.  We construct from $T(C_i)$ a path $p_i$ from $S_i$ to $R_i$, as follows (see also the diagram just before the proposition).  For $k = 1,2,\ldots,m$, at the $k$-th vertical closed loop $L_k$ in $N(n,m)$ we either (a) turn left at the first vertex of $L_k$ that we encounter, and turn right at the next vertex on $L_k$, or (b) go straight through $L_k$ at the first vertex of $L_k$ we encounter.  We choose option (a) whenever $k \notin T(C_i)$ and option (b) whenever $k \in T(C_i)$.  This completely determines $p_i$ since we know the source and the sink.  Note that we will pick up a weight of $\x_k^{(?)}$ if and only if option (b) is chosen, if and only if $k \in T(C_i)$.

Then we can verify that the map $T \mapsto (p_1,p_2,\ldots,p_{n-s})$ gives the desired bijection.
It is straightforward to check that the semistandard condition on $T$ corresponds to the non-crossing
condition for the paths $p_i$ and $p_{i+1}$ for any $1\leq i<n-s$.
For the remaining pair $p_{n-s}$ and $p_1$, recall that the shift
$(n-s,s)$ does not change the colors of the weights.
Then the cylindric semistandard condition ensures the non-crossing property of
the paths $p_{n-s}$ and $p_1$ as in the previous case.
\end{proof}

\begin{example}
 Consider the two families of paths in Figure \ref{fig:rig1}. They correspond to $n=3, m=2, s=1$, $\lambda = (2,1)$, $\mu = \emptyset$, $r=1$. In this case $S = (1,0)$ and 
$R=(2,0)$. 

The same families arise also for $n=3, m=2, s=1$, $\lambda = (2,2)$, $\mu = (1)$, $r=2$. In this case $S = (0,1)$ and 
$R=(0,2)$.
\qed
\end{example}

Recall that we have the symmetric group $S_m$ acting on $\mathbb \Q(\x_i^{(j)})$.

\begin{proposition} \label{prop:field}
The loop cylindric Schur functions $s_D^{(r)}(\x_1,\x_2,\ldots,\x_m)$ lie in the invariant subring $\Q[\x_i^{(j)}]^{S_m}$. 
\end{proposition}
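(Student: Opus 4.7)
The plan is to transfer the invariance statement to the path model for $s_D^{(r)}$ provided by Proposition \ref{prop:noncross}. By that proposition,
$$
s_D^{(r)}(\x_1,\ldots,\x_m) \;=\; \sum_{(p_1,\ldots,p_{n-s})} \prod_{i=1}^{n-s} \mathrm{wt}(p_i),
$$
where the sum runs over non-crossing families of highway paths in the cylindric grid network $N(n,m)$ with the prescribed sources $S$ and sinks $R$ described in the proof of Proposition \ref{prop:noncross}. Since $S_m$ is generated by the simple transpositions $s_k$ ($1 \le k \le m-1$), it suffices to show $\R_k(s_D^{(r)}) = s_D^{(r)}$ for each $k$.

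First I would identify the geometric content of $\R_k$: the two vertical closed loops $L_k$ and $L_{k+1}$ of $N(n,m)$ can be physically interchanged, and the formulas \eqref{E:s} are exactly the weight update that occurs at a single horizontal wire when the two adjacent vertices on $L_k, L_{k+1}$ swap places. This is the same network-level setup Yamada \cite{Y} uses and that underlies Theorem \ref{T:R}. Next I would invoke the local \emph{Yang--Baxter identity} for highway paths: for the elementary two-vertex brick formed by $L_k, L_{k+1}$ crossing a single horizontal wire, and for any prescribed boundary conditions of the incoming and outgoing path segments, the generating function for non-crossing path configurations through the brick is invariant under swapping $L_k$ and $L_{k+1}$ together with the weight change \eqref{E:s}. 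This is a finite case-check identical to the one that appears in the proof of the $\R$-invariance of the loop elementary symmetric functions (cf.\ \cite{Y}, \cite[Sec.~4]{L}, \cite{LPsurf}).

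Applying this local identity simultaneously at each of the $n$ horizontal wires of $N(n,m)$ yields a weight-preserving bijection between non-crossing path families in the original network and non-crossing path families in the network with $L_k$ and $L_{k+1}$ interchanged (now carrying weights $\R_k(\x)$). The sources $S$ and sinks $R$ lie on the horizontal wires and are untouched by this swap, so both generating functions equal $s_D^{(r)}$, establishing $\R_k(s_D^{(r)}) = s_D^{(r)}$.

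The main obstacle is the Yang--Baxter verification itself, which in principle must be carried out with the cylindric topology in mind: a single highway path in a family may wind arbitrarily around the cylinder, so one must ensure the local weight-preserving bijection at each horizontal wire composes consistently across all wires. However, since each local move is supported on a single brick and does not alter which edges lie on the remainder of the cylinder, the cylindrical global consistency is automatic once the local identity is known; the fact that $S$ and $R$ do not wrap fully around the cylinder, noted in the setup just before Proposition \ref{prop:noncross}, guarantees that no path configuration is lost or gained in the cylindrical gluing.
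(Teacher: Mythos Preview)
Your overall plan---interpret $s_D^{(r)}$ via Proposition~\ref{prop:noncross} as a generating function of non-crossing highway-path families in $N(n,m)$, then show this generating function is $\R_k$-invariant---is the same as the paper's. The gap is in how you realize $\R_k$ on the network.

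You assert that \eqref{E:s} is the weight update occurring at a \emph{single} horizontal wire when the two adjacent vertices on $L_k,L_{k+1}$ swap, and that one can therefore verify a ``local Yang--Baxter identity'' on each two-vertex brick independently. This is not correct: the transformation $\R(\x^{(i)})=\y^{(i+1)}\kappa_{i+1}/\kappa_i$ involves the functions $\kappa_r$, which depend on all $n$ colors at once, so the new weight at any one brick is a rational function of the weights at \emph{every} horizontal wire. There is no two-vertex local identity to check, and the case-check you cite from \cite{Y,L,LPsurf} does not take this form.

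The paper closes this gap via Lemmas~\ref{lem:Smnetwork} and~\ref{lem:measurements}: $\R_k$ is realized not as a single swap but as a \emph{sequence} of genuinely local moves from \cite{LPsurf}---create an auxiliary weight-$0$ crossing between $L_k$ and $L_{k+1}$, push it once around the cylinder by $n$ Yang--Baxter moves (Figure~\ref{fig:rig4}), then remove it. Each step is local and is checked (Lemma~\ref{lem:measurements}) to preserve the generating function $M_{S,R,[p]}(N)$ of non-crossing families with fixed sources, sinks, \emph{and homotopy classes}; the composite is $\R_k$. The homotopy-class bookkeeping is also missing from your proposal and is what guarantees that the degree constraint $\ell(p_i)$ from Proposition~\ref{prop:noncross} survives the sequence of moves.
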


Theorem \ref{thm:cylindric} follows immediately from Proposition \ref{prop:field} and Theorem \ref{thm:all}.

To prove Proposition \ref{prop:field}, we employ the network machinery of \cite{LPsurf}.  In particular, we will combine Lemmas \ref{lem:Smnetwork} and \ref{lem:measurements} below.

\begin{figure}[h!]
    \begin{center}
    \input{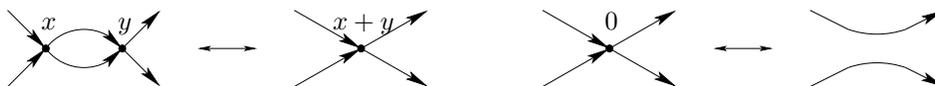}
    \end{center}
    \caption{Crossing merging/unmerging and crossing removal/creation moves.}
    \label{fig:rig3}
\end{figure}

\begin{figure}[h!]
    \begin{center}
    \input{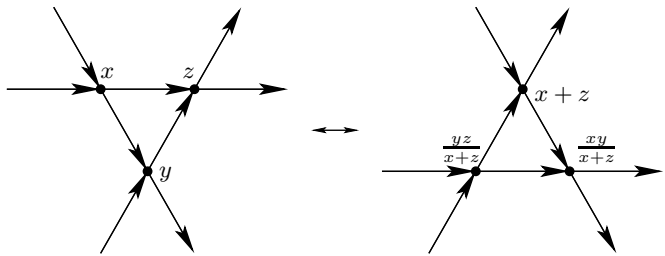}
    \end{center}
    \caption{The Yang-Baxter move.}
    \label{fig:rig4}
\end{figure}

\begin{lemma}[{\cite[Theorem 6.2]{LPsurf}}]
\label{lem:Smnetwork}
The $S_m$-action on the variables $\{\x_i^{(j)}\mid i =1,2,\ldots,m \text{ and } j \in \Z/n\Z\}$ can be realized as a sequence of the local moves from Figures \ref{fig:rig3} and \ref{fig:rig4}.
\end{lemma}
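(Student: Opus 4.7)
The plan is to realize each simple transposition $s_k \in S_m$, which acts via the birational $R$-matrix $\R_k$ on the pair $(\x_k,\x_{k+1})$ according to formula (\ref{E:s}), as an explicit sequence of Yang--Baxter and crossing merge/unmerge moves applied inside the two-loop subnetwork of $N(n,m)$ consisting of the $k$-th and $(k+1)$-th closed loops together with the $n$ horizontal source-to-sink wires. By the cyclic labeling of the loops one may assume $k=1$, and arbitrary elements of $S_m$ are then obtained by composing along any reduced word.

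First, I would isolate this two-loop subregion, which contains $2n$ crossings arranged in $n$ pairs (one pair per horizontal wire); the $k$-th loop carries the weights $\x_k^{(j)}$ and the $(k+1)$-th loop carries $\x_{k+1}^{(j)}$, for $j \in \Z/n\Z$. Next, I would write down an explicit braid-like sequence of Yang--Baxter moves (Figure \ref{fig:rig4}) applied to these $n$ pairs in succession, interspersed with crossing merge/unmerge moves (Figure \ref{fig:rig3}) used to bring adjacent vertices into Yang--Baxter position whenever needed. The net effect is to slide the $(k+1)$-th loop past the $k$-th loop, exchanging their positions while leaving the subnetwork outside this subregion undisturbed.

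Finally, I would compute the output weights by iterating the Yang--Baxter rule $(x,y,z)\mapsto(\tfrac{xy}{x+z},\,x+z,\,\tfrac{yz}{x+z})$ and verify that the new weight on the inner loop at color $i$ equals $\y^{(i+1)}\kappa_{i+1}(\x,\y)/\kappa_i(\x,\y)$ and on the outer loop at color $i$ equals $\x^{(i-1)}\kappa_{i-1}(\x,\y)/\kappa_i(\x,\y)$, matching (\ref{E:s}) exactly. The key identity that makes this verification tractable is a combinatorial one: the polynomial $\kappa_r(\x,\y)$ is itself the partition function for a certain family of non-crossing paths in the two-loop subregion, so the iterated Yang--Baxter transformations telescope to the stated ratios by a path-counting argument analogous to the one used in Proposition \ref{prop:noncross}.

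The main obstacle is the combinatorial bookkeeping of the intermediate stages: choosing the correct sequence of moves, tracking how the rational expressions at each crossing evolve under successive Yang--Baxter transformations, and organizing the final answer into the $\kappa_r$-ratio form appearing in (\ref{E:s}). Once this is established for the generator $\R_1$, the lemma follows for all of $S_m$, since the local moves are known to satisfy the relations of the symmetric group (indeed, the Yang--Baxter move implements the braid relation up to merge/unmerge), so any reduced expression for a permutation yields a well-defined transformation of network weights.
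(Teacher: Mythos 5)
First, note that the paper does not prove this lemma at all: it is imported verbatim from \cite[Theorem 6.2]{LPsurf}, so there is no in-paper argument to compare against. Your strategy --- reduce to a single generator $\R_k$ acting on the $k$-th and $(k+1)$-th loops, realize that generator by propagating crossings around the cylinder via Yang--Baxter moves, and compose along reduced words (using that the $\R_k$ already satisfy the Coxeter relations, Theorem \ref{T:R}) --- is indeed the strategy of the cited proof. So the outline is sound.

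That said, as written this is a plan rather than a proof, and the part you defer as ``combinatorial bookkeeping'' is the entire content of the lemma. Two specific points deserve more care. (i) The two loops in $N(n,m)$ are disjoint, so no Yang--Baxter move can be applied to them until crossings between the two loop wires are \emph{created} via the moves of Figure \ref{fig:rig3}; this is not merely a device ``to bring adjacent vertices into Yang--Baxter position'' but the essential first and last step. Moreover, two homologous non-contractible loops on a cylinder must intersect an even number of times, so crossings must be created and removed in pairs, and the verification that the residual crossing has weight $0$ after the Yang--Baxter moves have carried its partner once around the cylinder is precisely where the nontrivial identity lives --- it is there, not in a generic ``telescoping,'' that the functions $\kappa_r(\x,\y)$ of \eqref{E:s} emerge. (ii) Your claim that $\kappa_r$ is itself a partition function of non-crossing paths in the two-loop subregion is plausible and would be a clean way to organize the computation, but it is an assertion requiring its own proof; Proposition \ref{prop:noncross} concerns source-to-sink families in the full network and does not directly supply it. Until the explicit move sequence is written down and the inductive weight computation carried out (or the reader is simply referred to \cite{LPsurf}, as the paper does), the argument is incomplete.
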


In other words, by applying a sequence of the moves in Figures \ref{fig:rig3} and \ref{fig:rig4}, we can change the grid network $N(n,m)$ into itself, so that the vertex weights have been changed by the action of any permutation $w \in S_m$ (acting via the geometric $R$-matrix).  

\begin{figure}[h!]
    \begin{center}
    \input{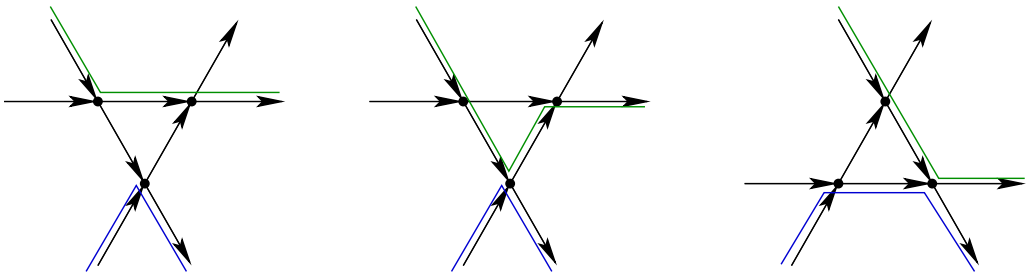}
    \end{center}
    \caption{Pairs of non-crossing highway paths with a fixed set of sources and sinks.}
    \label{fig:rig5}
\end{figure}

Consider any vertex weighted network $N$ on a surface as in \cite{LPsurf}, which in particular includes our grid networks $N(n,m)$ on a cylinder.  
For simplicity, we assume that any highway path $p$ in $N$ is guaranteed to never use a vertex more than once.

Let $S = (S_1,S_2,\ldots,S_a)$ be an ordered tuple of sources on $N$ (lying on the boundary of the surface) and $R = (R_1,R_2,\ldots,R_a)$ be an ordered tuple of sinks on $N$ of the same cardinality as $S$.  Let $M_{S,R,[p]}(N)$ denote the weight generating function of non-crossing path families $(p_1,p_2,\ldots,p_a)$, where $p_i$ goes from $S_i$ to $R_i$, and has a fixed homotopy class $[p_i]$.  Here $[p]=([p_1],[p_2],\ldots,[p_a])$ is an ordered tuple of homotopy classes of simple curves which begin and end on the boundary of the surface.

\begin{lemma}\label{lem:measurements}
Suppose $N$ and $N'$ are related by one of the local moves depicted in Figures \ref{fig:rig3} and \ref{fig:rig4}.  Then $M_{S,R,[p]}(N) = M_{S,R,[p]}(N')$.
\end{lemma}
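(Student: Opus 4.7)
The plan is to reduce the lemma to a local, finite verification. Since $N$ and $N'$ coincide outside a small disk-like region $U$ where the move is applied, any highway path in either network can be broken into its intersection with $U$ and its restriction to the complement of $U$. Moreover, because $U$ is simply connected inside the surface, all local moves preserve homotopy classes of every path crossing $U$ relative to their entry/exit points on $\partial U$. Consequently, for each ``boundary pattern'' $\pi$ (a prescription of entry/exit points on $\partial U$ together with a pairing specifying which entry matches which exit), we may write
\[
M_{S,R,[p]}(N) \;=\; \sum_{\pi} A_\pi \cdot W^{\mathrm{loc}}_\pi(N),
\]
where $A_\pi$ is the contribution coming from completing the paths outside $U$ in a manner compatible with $S,R,[p]$ and $\pi$, and $W^{\mathrm{loc}}_\pi(N)$ is the weight generating function of non-crossing completions inside $U$ matching $\pi$. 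Since $A_\pi$ is unchanged by the local move, it suffices to verify $W^{\mathrm{loc}}_\pi(N)=W^{\mathrm{loc}}_\pi(N')$ for every $\pi$.

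For the two moves in Figure \ref{fig:rig3}, the case analysis is elementary. In the merging move, enumerate $\pi$ by the number of paths using the two affected edges. If no path does, both sides equal $1$. For a single traversing path, the local contribution on $N$ is $x$ (go straight through the $x$-vertex, turn at the $y$-vertex) plus $y$ (turn at the $x$-vertex, go straight through the $y$-vertex); the sum $x+y$ equals the single-vertex weight on $N'$. For two paths, each turns with weight $1$, matching the single vertex on $N'$. The crossing-removal move is even simpler: a vertex of weight $0$ contributes $0$ to any family passing straight through it and $1$ to any family that turns, so its removal does not alter any $W^{\mathrm{loc}}_\pi$.

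The main obstacle, and the only substantive case, is the Yang--Baxter move of Figure \ref{fig:rig4}. Here one enumerates the finitely many boundary patterns for the triangular region containing three vertices of weights $x,y,z$ and checks that for each such $\pi$ the sum over non-crossing completions on the left-hand side of the move matches the analogous sum on the right-hand side with weights $\tfrac{xy}{x+z}$, $\tfrac{yz}{x+z}$, $x+z$. Each case reduces, after clearing denominators, to an elementary identity such as $xy + yz = y(x+z)$. Taken together, these identities are equivalent to the defining rational relations of the birational $R$-matrix $\R$ from Section \ref{sec:birationalR}, and the verification is essentially the one already carried out in \cite[Theorem 6.2]{LPsurf}; the lemma may thus be viewed as the path-generating-function incarnation of that algebraic statement. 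Finally, one observes that in every case above the non-crossing completions inside $U$ are freely homotopic relative to $\partial U$, so the homotopy class $[p]$ is indeed preserved by the bijection between local completions, completing the reduction.
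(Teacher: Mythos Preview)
Your approach is essentially the same as the paper's: reduce to a local verification inside the disk where the move takes place and check each move case by case. Your explicit factorization $M_{S,R,[p]}(N)=\sum_\pi A_\pi\cdot W^{\mathrm{loc}}_\pi(N)$ is a nice way to articulate the locality, and your treatment of the moves in Figure~\ref{fig:rig3} matches the paper's.

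Two corrections for the Yang--Baxter case. First, the relevant reference in \cite{LPsurf} is Theorem~3.2, not Theorem~6.2; the latter (cited here as Lemma~\ref{lem:Smnetwork}) concerns realizing the $S_m$-action by local moves, not invariance of path weights. Second, and more substantively, \cite[Theorem~3.2]{LPsurf} only treats the \emph{single-path} case, so you cannot defer the full verification there. The genuinely new content of this lemma is precisely the two-path (and trivially three-path) Yang--Baxter check for non-crossing families; the paper flags this and exhibits one representative identity, $z\cdot 1 + x\cdot 1 = (x+z)\cdot 1$. Your outline of ``enumerate boundary patterns and check identities'' is correct, but you should carry out at least one of those two-path cases rather than pointing to a reference that does not contain them.
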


\begin{proof}
This is a case-by-case verification for each of the local moves in Figures \ref{fig:rig3} and \ref{fig:rig4}.  The same calculation was carried out in \cite[Theorem 3.2]{LPsurf} in the case where $a = 1$; that is, it was checked that the claim is true when we consider a single path instead of a family of paths.

For the moves in Figure \ref{fig:rig3} the additional check for the case of a family of paths is trivial: at most two paths in the family can pass through the local picture, and when two non-crossing paths pass through the local picture they will both pick up no weight (the non-crossing condition forces the paths to turn at every vertex).  

For the Yang-Baxter move in Figure \ref{fig:rig4}, we need to consider the case of two paths passing through the local picture, and the case of three paths.  The latter case is again trivial because there the paths will pick up no weight.  For two paths we must do a case-by-case check.  We show one from the (short) list of needed calculations,
see Figure \ref{fig:rig5}. The equality to verify in this case is 
$$\textcolor{green} z \cdot \textcolor{blue} 1 + \textcolor{green} x \cdot \textcolor{blue} 1 = \textcolor{green} {(x+z)} \cdot \textcolor{blue} 1. $$
\end{proof}

\section{The conjecture}\label{sec:main}
\subsection{The tropicalization operation}
\label{sec:tropical}
Let $P(\x_1,\x_2,\ldots,\x_r)$ be a subtraction-free rational function with integer coefficients in the variables $\x_1,\ldots,\x_r$.  In other words, we assume that $P(\x_1,\x_2,\ldots,\x_r)$ can be written as a ratio of two polynomials with positive integer coefficients.  We define the \defn{tropicalization} $\trop(P)(x_1,x_2,\ldots,x_r)$ of $P(\x_1,\x_2,\ldots,\x_r)$ to be the piecewise-linear function in the variables $x_1,x_2,\ldots,x_r$, obtained from $P(\x_1,\x_2,\ldots,\x_r)$ by changing $+ \mapsto \min$, $\times \mapsto +$, and $\div \mapsto -$.  It is not difficult to see that $\trop(P)$ is well-defined: it does not depend on which subtraction-free expression of $P$ is used to compute it.  By convention, the polynomial $P = 1$ tropicalizes to the function $0$.

From now on, we fix positive integers $n$ and $m$.  Our rational functions will be in the variables $\{(\x_j^{(1)},\ldots,\x_j^{(n)}) \mid 1 \leq j \leq m\}$, and our piecewise-linear functions will be in the variables $\{(x_j^{(1)},\ldots,x_j^{(n)}) \mid 1 \leq j \leq m\}$.

The variables $\{(x_j^{(1)},\ldots,x_j^{(n)}) \mid 1 \leq j \leq m\}$ are ``coordinates" on the tensor products of row crystals in Section \ref{sec:R}, as follows.  Suppose that we are given the element $p=b_1\otimes\cdots\otimes b_m\in B^{1,s_1}\otimes\cdots\otimes B^{1,s_m}$ in a tensor product.  Then we let $x_j^{(i+j-1)}$ stand for the number of occurrences of the letter $i$ in $b_{m+1-j}$.  

The birational $R$-matrix of Section \ref{sec:birationalR} is a rational transformation all of whose coordinates are given by subtraction-free rational functions.  It therefore makes sense to ask for the tropicalization of this rational transformation, which will be a piecewise-linear transformation in the variables $\{(x_j^{(1)},\ldots,x_j^{(n)}) \mid 1 \leq j \leq m\}$.  The following well-known result connects the birational $R$-matrix with the combinatorial $R$-matrix defined in Section \ref{sec:R}.

\begin{theorem}[\cite{HHIKTT,Y}] \label{thm:Rtrop}
The tropicalization of the birational $R$-matrix $\R$ is exactly
the combinatorial $R$-matrix $R$ for tensor products of one row crystals.
\end{theorem}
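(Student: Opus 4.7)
The plan is a direct comparison of explicit formulas on both sides: both the combinatorial $R$-matrix on $B^{1,s}\otimes B^{1,s'}$ and the birational $R$-matrix admit explicit closed-form descriptions in the cited literature, and Yamada's birational $R$-matrix \cite{Y} was in fact engineered so that its tropicalization reproduces the piecewise-linear $R$-matrix of \cite{HHIKTT}. So the role of a proof is to unwind both sides and check they agree.

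First, I would recall from \cite{HHIKTT} the piecewise-linear description of $R$ on $B^{1,s}\otimes B^{1,s'}$. Identifying an element $b \in B^{1,s}$ with its content vector $(x^{(1)},\ldots,x^{(n)})$, where $x^{(i)}$ is the multiplicity of the letter $i$ and $\sum_i x^{(i)} = s$, and similarly identifying $b' \in B^{1,s'}$ with $(y^{(1)},\ldots,y^{(n)})$, the combinatorial $R$-matrix $R:b\otimes b'\mapsto \tilde b'\otimes \tilde b$ is written as an explicit piecewise-linear transformation whose key ingredient is a ``minimum over cyclic shifts'' quantity computing how many of each letter ``pass through'' from one factor to the other.

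Second, I would tropicalize the birational formula directly. The polynomial $\kappa_r(\x,\y)=\sum_{s=0}^{n-1}\prod_{t=1}^{s}\y^{(r+t)}\prod_{t=s+1}^{n-1}\x^{(r+t)}$ is manifestly subtraction-free, so
\[
\trop(\kappa_r)(x,y)=\min_{0\leq s\leq n-1}\Bigl(\sum_{t=1}^{s}y^{(r+t)}+\sum_{t=s+1}^{n-1}x^{(r+t)}\Bigr),
\]
and hence
\[
\trop(\R)(x^{(i)})=y^{(i+1)}+\trop(\kappa_{i+1})-\trop(\kappa_i),\qquad \trop(\R)(y^{(i)})=x^{(i-1)}+\trop(\kappa_{i-1})-\trop(\kappa_i).
\]
Third, I would compare: after aligning the conventions (in particular, the color-shift $x_j^{(i+j-1)}\leftrightarrow$ ``number of letter $i$ in $b_{m+1-j}$'' and the direction in which tensor factors are labeled versus which variables are called $\x$ or $\y$), the above tropical expressions are precisely the formulas from \cite{HHIKTT}. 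Term-by-term verification of this identification is routine.

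The main obstacle is not a substantive mathematical one but the bookkeeping of conventions: matching the cyclic indexing modulo $n$, the shift between geometric color $i+j-1$ and combinatorial letter $i$, and the reversal of factor order between $b\otimes b'$ and the $\x,\y$ labeling of the birational $R$-matrix. Once this dictionary is in place, the identity $\trop(\R)=R$ reduces to the assertion that $\trop(\kappa_r)$ realizes the combinatorial ``carry'' function of \cite{HHIKTT}, which is the very definition of Yamada's lift.
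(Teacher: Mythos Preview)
Your proposal is correct and is exactly the standard verification. Note, however, that the paper does not supply its own proof of this theorem: it is stated as a known result with citation to \cite{HHIKTT,Y}, and the argument you outline is essentially the content of those references (Yamada \cite{Y} constructs $\R$ precisely as a subtraction-free rational lift of the piecewise-linear $R$ of \cite{HHIKTT}). So there is nothing to compare against in the paper itself; your write-up amounts to a faithful summary of the cited proof.
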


The energy $H(b \otimes b')$ of Theorem \ref{thm:Hdefn} can also be described in a similar way.  In fact, this result follows quite easily from the description of energy given in Theorem \ref{thm:Hdefn}. 

\begin{proposition}
Let $y^{(i+1)}$ be the number of occurrences of the letter $i$ in $b$, and let $x^{(i)}$ be the number of occurrences of the letter $i$ in $b'$.  Then $H(b \otimes b') = \trop(\kappa_1)(x,y) $.
\end{proposition}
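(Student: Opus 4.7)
The plan is to use the insertion description of $H$ from Theorem~\ref{thm:Hdefn} and compute the length of the first row of the resulting two-row tableau directly, then match the answer with the expansion of $\trop(\kappa_1)$.

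Write $\alpha_i$ for the number of occurrences of the letter $i$ in $b'$ and $\beta_i$ for the number in $b$, so that by hypothesis $x^{(i)} = \alpha_i$ and $y^{(i+1)} = \beta_i$. Since both $b$ and $b'$ are single-row tableaux, a short induction shows that the insertion $P := (b' \leftarrow \operatorname{row}(b))$ has at most two rows (each bump from row~$1$ is appended to the right end of row~$2$). The concatenation of the shapes $(s)$ and $(s')$ is the one-row partition $(s+s')$, so Theorem~\ref{thm:Hdefn} gives
$$
H(b \otimes b') \;=\; s + s' - \lambda_1(P),
$$
where $\lambda_1(P)$ is the length of the top row of $P$.

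Next, by the classical theorem of Schensted, $\lambda_1(P)$ equals the length of the longest weakly increasing subsequence of the word $w := c_1 c_2 \cdots c_{s'} a_1 a_2 \cdots a_s$, where $c_j$ and $a_i$ are the entries of $b'$ and $b$ in order. Because each of $b'$ and $b$ is itself weakly increasing, any weakly increasing subsequence of $w$ is a (chosen) subset of $c$-entries followed by a (chosen) subset of $a$-entries, subject only to the constraint that the last $c$ used is at most the first $a$ used. To maximise the length we may therefore fix a threshold $v \in \{1,\ldots,n\}$ and take all $c_j$ of value $\le v$ together with all $a_i$ of value $\ge v$. This yields
$$
\lambda_1(P) \;=\; \max_{1 \le v \le n} \bigl( \alpha_1 + \alpha_2 + \cdots + \alpha_v + \beta_v + \beta_{v+1} + \cdots + \beta_n \bigr),
$$
and consequently
$$
H(b \otimes b') \;=\; \min_{1 \le v \le n} \bigl( \alpha_{v+1} + \cdots + \alpha_n + \beta_1 + \cdots + \beta_{v-1} \bigr).
$$

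Finally, I will reindex by $s := v-1$ and substitute $\alpha_i = x^{(i)}$, $\beta_i = y^{(i+1)}$ to rewrite the right-hand side as
$$
\min_{0 \le s \le n-1} \Bigl( \sum_{t=1}^{s} y^{(t+1)} + \sum_{t=s+1}^{n-1} x^{(t+1)} \Bigr),
$$
which is precisely $\trop(\kappa_1)(x,y)$ from the explicit formula for $\kappa_1$ given in Section~\ref{sec:birationalR}. The only step with any subtlety is the "threshold" reduction of a longest weakly increasing subsequence of $w$: the key observation is that enlarging any candidate subsequence to include every $c_j$ not exceeding the last $c$ used and every $a_i$ not below the first $a$ used only increases its length without violating weak monotonicity, so one may assume the candidate has the threshold form.
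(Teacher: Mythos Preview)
Your argument is correct and is precisely the route the paper indicates: the paper does not spell out a proof but simply remarks that the statement ``follows quite easily from the description of energy given in Theorem~\ref{thm:Hdefn},'' and your use of that insertion description together with Schensted's theorem and the threshold reduction carries this out in full. One cosmetic point: you reuse the letter $s$ for your reindexing variable in the final display, which clashes with the earlier meaning of $s$ as the length of $b$; renaming it (say to $u$) would avoid any ambiguity.
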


While we will not explicitly use these results, they do illustrate a general phenomenon: the combinatorial algorithms and quantities in the box-ball system can be lifted to the geometric/rational world.  Furthermore, these lifts are expressed in terms of loop symmetric functions.  Our work presents an extension of this philosophy to rigged configurations.

\subsection{Tropicalization of cylindric loop Schur functions}
\label{sec:formulation}
In this subsection we present our main conjecture: a loop symmetric function that tropicalizes to the shapes of the rigged configuration.  The positive integers $n$ and $m$ are always fixed.

Let $1 \leq s \leq (n-1)$.  
We define a partition $\lambda(s,r)$ recursively as follows.  Set $\lambda(s,0) = (n-s)^{m}$ to be a rectangle with $m$ rows of length $(n-s)$.  The partition $\lambda(s,r+1)$ is obtained from $\lambda(s,r)$ by removing the largest ribbon strip $R$ from $\lambda(s,r)$ that contains all the boxes in the bottom row of $\lambda(s,r)$, and such that $R$ has at most $n$ boxes.  Obviously we have $\lambda(s,0) \supset \lambda(s,1) \supset \cdots $, and the sequence of partitions eventually stabilize at the empty partition.

%

For example, suppose $n = 6, m = 7$, and $s =2$.  Then the shapes are
\[
\Yboxdim8pt\Yvcentermath1
\yng(4,4,4,4,4,4,4)\qquad
\yng(4,4,4,4,3,3)\qquad
\yng(4,4,4,2,2)\qquad
\yng(4,4,1,1)\qquad
\yng(4)
\]

It is possible for the ribbon strip $R$ to have less than $n$ boxes.  For example, suppose $n = 6, m =3$, and $s = 3$.  Then the shapes are
\[
\Yboxdim8pt\Yvcentermath1
\yng(3,3,3) \qquad \yng(2,2) \qquad \yng(1)
\]

Recall that $\D_s(\lambda(s,r))$ is obtained by propagating $\lambda(s,r)$ periodically in the plane by shifting $n-s$ steps to the right and $s$ steps up.

\begin{conjecture} \label{conj:shapes}
Let $p=b_1\otimes\cdots\otimes b_m\in B^{1,s_1}\otimes\cdots\otimes B^{1,s_m}$ and define $x_j^{(i+j-1)}$ to be the number of occurrences of the letter $i$ in $b_{m+1-j}$.  Then for $1 \leq s \leq n-1$ and an integer $r \geq 1$, we have
$$
\trop \left(\frac{s_{\D_s(\lambda(s,r-1))}^{(0)}}{s_{\D_s(\lambda(s,r))}^{(0)}} \right)(x_j^{(i)}) = \nu^{(s)}_r,
$$
where $\nu^{(s)}_r$ is the $r$-th part in the $s$-th shape of the rigged configuration $\Phi(p)$.
\end{conjecture}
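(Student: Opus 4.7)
The plan is to establish the conjecture in the most accessible case $s=1$---the case relevant to the lengths of solitons in the box-ball system---and then to point out where the argument breaks down for general $s>1$.

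The first step is to express the partition $\nu^{(1)}$ in terms of quantities I can access on the rational side. By Theorem~\ref{th:E=Q} one has $Q_s(\nu^{(1)})=E^{1,s}(p)$ for every $s\geq 1$, and the sequence $(Q_s(\nu^{(1)}))_{s\geq 1}$ determines $\nu^{(1)}$ entirely, so it suffices to identify each part $\nu^{(1)}_r$ as a suitable signed combination of energies and then match that combination with the tropicalization of the stated ratio. Passing to the rational world, Theorem~\ref{thm:Rtrop} identifies the combinatorial $R$-matrix with the tropicalization of $\R$, while the Lam--Pylyavskyy formula (Theorem~\ref{thm:LP}) rewrites each local energy $H(u^{(k)}\otimes b_k)$ as the tropicalization of a concrete loop Schur function in the $\x^{(i)}_j$. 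Iterating along the diagram~\eqref{eq:def_bbs}, one obtains for each $s$ a subtraction-free rational expression $F_s$ with $\trop(F_s)=Q_s(\nu^{(1)})$; loop-symmetry of $F_s$ follows from Theorem~\ref{thm:all} together with the $R$-invariance of $Q_s(\nu^{(1)})$ (Theorem~\ref{th:combR}), so in fact $F_s\in \LSym_m$.

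The heart of the proof is to match the specific cylindric loop Schur function $s^{(0)}_{\D_1(\lambda(1,r))}$ with an appropriate $F_{s(r)}$, up to a factor that cancels in the ratio, where $s(r)$ is the effective carrier size encoded by the ribbon-stripping recipe of Section~\ref{sec:formulation}. I would proceed by first expanding $s^{(0)}_{\D_1(\lambda(1,r))}$ via the non-crossing highway-path model of Proposition~\ref{prop:noncross} on the cylindrical network $N(n,m)$, and then invoking the explicit polynomial form singled out in Proposition~\ref{prop:simple_formula} for the $a=1$ case. The tropicalization of the resulting subtraction-free sum is a minimum over non-crossing path families, which I would match, via an explicit bijection, with the combinatorial data that witnesses $Q_{s(r)}(\nu^{(1)})$---namely the carrier--ball records along the bottom row of~\eqref{eq:def_bbs}. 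Both sides of the target identity are $S_m$-invariant (Theorem~\ref{thm:cylindric} on the Schur side, Theorem~\ref{th:combR} on the crystal side), so after the bijection is in place one can reduce to a convenient normal form for $p$, such as a highest-weight representative, where the identity follows by direct inspection.

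The main obstacle in the $s=1$ case is precisely this last bijection: to go from a tropicalized sum over cylindric non-crossing highway-path families to a carrier-trajectory computation of $E^{1,s}(p)$, one has to track the contributions of the numerator and denominator simultaneously so that a uniform ``overhead'' cancels out of the ratio and only $\nu^{(1)}_r$ remains. For general $s>1$ the obstacle is conceptually different: Theorem~\ref{th:E=Q} still gives $Q_s(\nu^{(s)})=E^{s,s}(p)$, but the Lam--Pylyavskyy formula as currently formulated in \cite{LP} handles only carriers of height $r=1$, and no geometric crystal is yet known for $B^{r,s}$ with $r>1$. This is precisely the gap flagged in the introduction, and any extension of this strategy to the higher shapes presumably requires constructing those geometric crystals first.
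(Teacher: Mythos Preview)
Your outline has the right starting ingredients---Theorem~\ref{th:E=Q} for $Q_\ell(\nu^{(1)})=E^{1,\ell}(p)$ and the Lam--Pylyavskyy energy formula---but the bridge you propose between them and the cylindric loop Schur functions is not the one that actually works, and the missing piece is substantial.

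The crucial point you have not identified is that the cylindric loop Schur function $s^{(0)}_{\D_1(\lambda(1,r))}$ does \emph{not} match any single energy $F_{s(r)}$, even up to a cancelling factor. Rather, by Lemma~\ref{lem:tau} one has the elementary identification $s^{(0)}_{\D_1(\lambda(1,r))}=\tau^{(0)}_{(n-1)m-rn}$, and Corollary~\ref{cor:E} then says
\[
E_\ell \;=\; \min_{0\le i\le (n-1)m/n}\bigl(i\ell + \Theta^{(0)}_{(n-1)m-in}\bigr),
\]
so the tropicalized cylindric Schur functions are the \emph{coefficients} in a tropical polynomial whose values at $\ell=1,2,\ldots$ are the $Q_\ell(\nu^{(1)})$. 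To extract $\nu^{(1)}_r$ as the difference $\Theta^{(0)}_{(n-1)m-(r-1)n}-\Theta^{(0)}_{(n-1)m-rn}$ one needs to know that this minimum ``sees'' every term, i.e., that no term is redundant. This is exactly the log-concavity statement $\Theta^{(0)}_{N-n}+\Theta^{(0)}_{N+n}\ge 2\Theta^{(0)}_N$ (Lemma~\ref{lem:convextau}), proved by a cell-transfer injection showing $(\tau^{(0)}_N)^2-\tau^{(0)}_{N-n}\tau^{(0)}_{N+n}$ is monomial-positive. With convexity in hand, an elementary lemma on tropical polynomials (Lemma~\ref{lem:convex}) shows that the partition of successive differences of the $\Theta$'s is conjugate to $(E_1,E_2-E_1,\ldots)$, which by Theorem~\ref{th:E=Q} is conjugate to $\nu^{(1)}$. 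That is the whole proof.

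Your alternative route---a bijection between non-crossing highway-path families and ``carrier--ball records'', followed by a reduction to a highest-weight representative---does not correspond to any argument that is carried out, and it is not clear it could be made to work: the quantity $Q_{\ell}(\nu^{(1)})$ is not witnessed by a single path family but by the location of the breakpoints of the piecewise-linear function $\ell\mapsto E_\ell$, and without the convexity of the $\Theta$'s there is no guarantee that those breakpoints recover each $\nu^{(1)}_r$ individually. Also, invoking Proposition~\ref{prop:simple_formula} as an input is circular: that proposition is precisely the restatement of Theorem~\ref{thm:main} via Lemma~\ref{lem:tau}, hence it is the thing to be proved. Your remarks on the $s>1$ obstruction are accurate.
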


Our conjecture completely determines the shapes of the rigged configuration $\Phi(p)$, but we have been unable to find a formula for the riggings themselves.

\begin{problem}
Find elements of $\Frac(\LSym_m)$ that tropicalize to the riggings $J^{(s)}$ of the rigged configuration $\Phi(p)$.
\end{problem}

\begin{example}
\label{ex:n=3m=3}
Let $n = 3$ and $m = 3$.  Then for $s = 1$, the shapes $\lambda(1,0),\lambda(1,1),\ldots$ are given by
\[
\Yboxdim8pt\Yvcentermath1
\yng(2,2,2)\qquad
\yng(2,1)\qquad
\emptyset\qquad
\emptyset\qquad
\cdots
\]
The cylindric loop Schur functions are (see Example \ref{ex:cylindric_Schur})
\begin{align*}
s_{\D_1(2,2,2)}^{(0)} =&\, \x_1^{(3)} \x_1^{(2)} \x_2^{(1)} \x_2^{(3)} \x_3^{(2)} \x_3^{(1)},\\
s_{\D_1(2,1)}^{(0)} =&\,
\x_1^{(3)} \x_1^{(2)} \x_2^{(1)} + \x_1^{(3)} \x_2^{(2)} \x_2^{(1)} + \x_1^{(3)} \x_2^{(2)} \x_3^{(1)} + 
\x_1^{(3)} \x_1^{(2)} \x_3^{(1)}\\
&+ \x_2^{(3)} \x_2^{(2)} \x_3^{(1)} + \x_1^{(3)} \x_3^{(2)} \x_3^{(1)} + \x_2^{(3)} \x_3^{(2)} \x_3^{(1)}.
\end{align*}
For $s=2$, $\lambda(2,0),\lambda(2,1),\ldots$ are given by
\[
\Yboxdim8pt\Yvcentermath1
\yng(1,1,1)\qquad
\emptyset\qquad
\emptyset\qquad
\cdots
\]
In this case, we have to consider the shift $(n-s,s)=(1,2)$.
Then the polynomial $s_{\D_2(1,1,1)}^{(0)}$ consists of only one term:
\begin{align*}
s_{\D_2(1,1,1)}^{(0)}=\x_1^{(3)}\x_2^{(1)}\x_3^{(2)}.
\end{align*}

Let us use the following coordinates for the tableau representation of the path
$p = b_1 \otimes b_2 \otimes b_3$
\begin{equation}\label{eq:pcoords}
p=\fbox{$1^{x_3^{(3)}}2^{x_3^{(1)}}3^{x_3^{(2)}}$}\otimes
\fbox{$1^{x_2^{(2)}}2^{x_2^{(3)}}3^{x_2^{(1)}}$}\otimes
\fbox{$1^{x_1^{(1)}}2^{x_1^{(2)}}3^{x_1^{(3)}}$}
=:\fbox{$1^a2^b3^c$}\otimes\fbox{$1^d2^e3^f$}\otimes\fbox{$1^g2^h3^i$}.
\end{equation}
In these coordinates, we have
\begin{align*}
\trop s_{\D_1(2,2,2)}^{(0)} &=i+h+f+e+c+b,\\
\trop s_{\D_1(2,1)}^{(0)} &=\min\{i+h+f,\,i+d+f,\,i+d+b,\,i+h+b,\\
&\hspace{14.5mm}
e+d+b,\,i+c+b,\,e+c+b\},\\
\trop s_{\D_2(1,1,1)}^{(0)} &=i+f+c.
\end{align*}
Then our formula reads
$\nu^{(1)}_1=\trop s_{\D_1(2,2,2)}^{(0)}-\trop s_{\D_1(2,1)}^{(0)}$,
$\nu^{(1)}_2=\trop s_{\D_1(2,1)}^{(0)}$ and $\nu^{(2)}_1=\trop s_{\D_2(1,1,1)}^{(0)}$.

For a numerical comparison, let us consider the following paths:
\[
\{a,b,c,d,e,f,g,h,i\}=\{0, 1, c, 2, 3, 1, 3, 3, 4\}
\]
where the notation is as in \eqref{eq:pcoords}.
Then $\Phi(p)$ takes the following form.
\[
\begin{array}{c|lll}
\hline\hline
c&(\nu^{(1)}_1,J^{(1)}_1)&(\nu^{(1)}_2,J^{(1)}_2)&(\nu^{(2)}_1,J^{(2)}_1)\\
\hline
0&(8,-5)&(4,-4)&(5,-1)\\
1&(8,-5)&(5,-4)&(6,-2)\\
2&(8,-5)&(6,-4)&(7,-3)\\
3&(9,-5)&(6,-3)&(8,-4)\\
4&(10,-5)&(6,-2)&(9,-5)\\
5&(11,-5)&(6,-1)&(10,-6)\\
6&(12,-5)&(6,-1)&(11,-7)\\
7&(13,-5)&(6,-1)&(12,-8)\\
\hline
\end{array}
\]

As another example, let us consider the paths
\[
\{a,b,c,d,e,f,g,h,i\}=\{2, 1, c, 3, 1, 1, 0, 1, 2\}.
\]
Then $\Phi(p)$ takes the following form.
\[
\begin{array}{c|lll}
\hline\hline
c&(\nu^{(1)}_1,J^{(1)}_1)&(\nu^{(1)}_2,J^{(1)}_2)&(\nu^{(2)}_1,J^{(2)}_1)\\
\hline
0&(4,1)&(2,-1)&(3,-1)\\
1&(4,1)&(3,-1)&(4,-2)\\
2&(4,-1)&(4,-1)&(5,-3)\\
3&(5,-1)&(4,-1)&(6,-4)\\
4&(6,-1)&(4,-1)&(7,-5)\\
5&(7,-1)&(4,-1)&(8,-6)\\
6&(8,-1)&(4,-1)&(9,-7)\\
7&(9,-1)&(4,-1)&(10,-8)\\
\hline
\end{array}
\]
In both cases, we see that the shapes of the configurations coincide with our expression in terms of the cylindric loop Schur functions. 
\qed
\end{example}

\begin{example}
Let us consider the case when $\nu^{(2)}$ has more than one row.
For that purpose, we take $n=4$ and $m=4$.
For $s=2$, the partitions $\lambda(2,0),\lambda(2,1),\ldots$ are given by
\[
\Yboxdim8pt\Yvcentermath1
\yng(2,2,2,2)\qquad
\yng(2,1,1)\qquad
\emptyset\qquad
\emptyset\qquad
\cdots
\]
In this case, in order to construct $\D_2(\lambda(s,r))$,
we have to consider the shift $(n-s,s)=(2,2)$.
Then $s^{(0)}_{\D_2(2,2,2,2)}$ consists of one term:
\[
s^{(0)}_{\D_2(2,2,2,2)}=\x^{(4)}_1\x^{(3)}_1\x^{(1)}_2\x^{(4)}_2\x^{(2)}_3\x^{(1)}_3\x^{(3)}_4\x^{(2)}_4.
\]
Next, we see that the following 14 tableaux contribute to $s^{(0)}_{\D_2(2,1,1)}$:
\begin{align*}
&
\young(11,2,3)\quad
\young(11,2,4)\quad
\young(11,3,4)\quad
\young(12,2,3)\quad
\young(12,2,4)\quad
\young(12,3,4)\quad
\young(13,2,3)\\
&
\young(13,2,4)\quad
\young(13,3,4)\quad
\young(14,2,4)\quad
\young(14,3,4)\quad
\young(22,3,4)\quad
\young(23,3,4)\quad
\young(24,3,4)
\end{align*}
however the following tableau does not contribute to $s^{(0)}_{\D_2(2,1,1)}$:
\[
\young(14,2,3)
\]
Therefore we have the following expression:
\begin{align*}
s^{(0)}_{\D_2(2,1,1)}&=
\x^{(4)}_1\x^{(3)}_1\x^{(1)}_2\x^{(2)}_3+
\x^{(4)}_1\x^{(3)}_1\x^{(1)}_2\x^{(2)}_4+
\x^{(4)}_1\x^{(3)}_1\x^{(1)}_3\x^{(2)}_4+
\x^{(4)}_1\x^{(3)}_2\x^{(1)}_2\x^{(2)}_3\\
&+
\x^{(4)}_1\x^{(3)}_2\x^{(1)}_2\x^{(2)}_4+
\x^{(4)}_1\x^{(3)}_2\x^{(1)}_3\x^{(2)}_4+
\x^{(4)}_1\x^{(3)}_3\x^{(1)}_2\x^{(2)}_3+
\x^{(4)}_1\x^{(3)}_3\x^{(1)}_2\x^{(2)}_4\\
&+
\x^{(4)}_1\x^{(3)}_3\x^{(1)}_3\x^{(2)}_4+
\x^{(4)}_1\x^{(3)}_4\x^{(1)}_2\x^{(2)}_4+
\x^{(4)}_1\x^{(3)}_4\x^{(1)}_3\x^{(2)}_4+
\x^{(4)}_2\x^{(3)}_2\x^{(1)}_3\x^{(2)}_4\\
&+
\x^{(4)}_2\x^{(3)}_3\x^{(1)}_3\x^{(2)}_4+
\x^{(4)}_2\x^{(3)}_4\x^{(1)}_3\x^{(2)}_4.
\end{align*}

For the path $p=b_1\otimes b_2\otimes b_3\otimes b_4$,
we introduce the following coordinates:
\begin{align*}
p&=
\fbox{$1^{x^{(4)}_4} 2^{x^{(1)}_4} 3^{x^{(2)}_4} 4^{x^{(3)}_4}$} \otimes
\fbox{$1^{x^{(3)}_3} 2^{x^{(4)}_3} 3^{x^{(1)}_3} 4^{x^{(2)}_3}$} \otimes
\fbox{$1^{x^{(2)}_2} 2^{x^{(3)}_2} 3^{x^{(4)}_2} 4^{x^{(1)}_2}$} \otimes
\fbox{$1^{x^{(1)}_1} 2^{x^{(2)}_1} 3^{x^{(3)}_1} 4^{x^{(4)}_1}$}\\
&=\fbox{$1^a 2^b 3^c 4^d $}\otimes \fbox{$1^e 2^f 3^g 4^h $}\otimes
\fbox{$1^i 2^j 3^k 4^l $}\otimes \fbox{$1^m 2^n 3^o 4^{p\mathstrut} $}.
\end{align*}
Then we have
\begin{align*}
\trop s^{(0)}_{\D_2(2,2,2,2)}=&\,p+o+l+k+h+g+d+c,\\
\trop s^{(0)}_{\D_2(2,1,1)}=&\,\min(
p + o + l + h,\, p + o + l + c,\, p + o + g + c,\, p + j + l + h,\\
&\hspace{10mm} p + j + l + c,\, p + j + g + c,\, p + e + l + h,\, p + e + l + c,\\
&\hspace{10mm} p + e + g + c,\, p + d + l + c,\, p + d + g + c,\, k + j + g + c,\\
&\hspace{10mm} k + e + g + c,\, k + d + g + c).
\end{align*}
Then our formula reads $\nu^{(2)}_1=\trop s^{(0)}_{\D_2(2,2,2,2)}-\trop s^{(0)}_{\D_2(2,1,1)}$
and $\nu^{(2)}_2=\trop s^{(0)}_{\D_2(2,1,1)}$.

For a numerical comparison, let us consider the following paths;
\[
\{a,b,c,d,e,f,g,h,i,j,k,l,m,n,o,p\}
=\{3, 2, c, 3, 3, 3, 1, 0, 0, 3, 0, 2, 1, 0, 3, 3\}.
\]
Then $\Phi(p)$ takes the following form.
\[
\begin{array}{c|llllll}
\hline\hline
c & (\nu^{(1)}_1, J^{(1)}_1) & (\nu^{(1)}_2, J^{(1)}_2) & (\nu^{(1)}_3, J^{(1)}_3) &
 (\nu^{(2)}_1, J^{(2)}_1) & (\nu^{(2)}_2, J^{(2)}_2) & (\nu^{(3)}_1, J^{(3)}_1)\\
\hline
0 & (8,-2) & (8,-2) & (4,-1) & (8,4) & (4,0) & (8,-7) \\
1 & (9,-2) & (8,-1) & (4,-1) & (8,2) & (5,-1) & (8,-6) \\
2 & (10,-2) & (8,0) & (4,-1) & (8,0) & (6,-2) & (8,-5) \\
3 & (11,-2) & (8,1) & (4,-1) & (8,-2) & (7,-3) & (8,-4) \\
4 & (12,-2) & (8,2) & (4,-1) & (8,-4) & (8,-4) & (8,-3) \\
5 & (13,-2) & (8,2) & (4,-1) & (9,-5) & (8,-4) & (8,-3) \\
6 & (14,-2) & (8,2) & (4,-1) & (10,-6) & (8,-4) & (8,-3) \\
7 & (15,-2) & (8,2) & (4,-1) & (11,-7) & (8,-4) & (8,-3) \\
\hline
\end{array}
\]
These results coincide with our formula.
The riggings behave in a more complicated way than the shapes of the configurations
(some of the successive differences are equal to 2).
\qed
\end{example}

\begin{remark}
\label{rem:BerensteinKirillov}
Kirillov and Berenstein \cite{BK} computed explicit formulae for the rigged configuration $\Phi(p)$ for certain classes of paths.
Our philosophy in this paper is to produce formulae that lift to birational $R$-matrix invariants.  As far as we are aware, this is not the case for the formulae in \cite{BK}. 
\end{remark}

\section{Formula for the first shape of a rigged configuration}
\label{sec:first-shape}
In this section, we prove Conjecture \ref{conj:shapes} for the first shape $\nu^{(1)}$ of the rigged configuration.
In Section \ref{sec:main-theorem}, we write down the main result following
the description of Conjecture \ref{conj:shapes} and discuss its relation with the box-ball systems.
In Section \ref{sec:simplify} we show that related functions have a simplified expression in our case.
The proofs begin in Section \ref{sec:energy-tropical}.

\subsection{The main theorem and its implications}
\label{sec:main-theorem}
Let $p=b_1\otimes\cdots\otimes b_m\in B^{1,s_1}\otimes\cdots\otimes B^{1,s_m}$ be as usual, and let $x_j^{(i+j-1)}$ be the number of occurrences of the letter $i$ in $b_{m+1-j}$.  Let $\nu^{(1)}$ be the first partition in the rigged configuration $\Phi(p)$.  

\begin{theorem} \label{thm:main}
The parts of the first shape $\nu^{(1)}$ of a rigged configuration is given by the formula
$$
\nu^{(1)}_r = \trop{s_{\D_1(\lambda(1,r-1))}^{(0)}}(x_1,x_2,\ldots,x_m)
-\trop{s_{\D_1(\lambda(1,r))}^{(0)}}(x_1,x_2,\ldots,x_m).
$$
\end{theorem}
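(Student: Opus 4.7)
The plan is to reduce Theorem \ref{thm:main} to a cleaner family of identities and then combine the Sakamoto formula (Theorem \ref{th:E=Q}) with the Lam--Pylyavskyy tropical formula for energy (Theorem \ref{thm:LP}). First, I would split the statement into the family of identities
$$
\trop s^{(0)}_{\D_1(\lambda(1,r))}(x_j^{(i)}) \;=\; \sum_{i > r} \nu^{(1)}_i \qquad (r \geq 0),
$$
from which Theorem \ref{thm:main} follows by telescoping successive differences. The base case $r = 0$ is essentially immediate: the cylindric rectangle $\D_1((n-1)^m)$ admits a unique cylindric semistandard filling (each column is forced), and its tropical weight is the total count of non-$1$ letters of $p$, which coincides with $|\nu^{(1)}|$.

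Second, I would invoke the elementary partition identity
$$
\sum_{i > r} \mu_i \;=\; \max_{s \geq 0}\bigl(Q_s(\mu) - rs\bigr),
$$
with the maximum attained throughout the plateau $[\mu_{r+1}, \mu_r]$. Combined with Theorem \ref{th:E=Q}, this rewrites the right-hand side as
$$
\sum_{i > r} \nu^{(1)}_i \;=\; \max_{s \geq 0}\bigl(E^{1,s}(p) - rs\bigr).
$$
Theorem \ref{thm:LP} then expresses each $E^{1,s}(p)$ as the tropicalization of an explicit loop Schur function $F_s(\x) \in \LSym_m$, reducing the problem to the single identity
$$
\trop s^{(0)}_{\D_1(\lambda(1,r))}(x_j^{(i)}) \;=\; \max_{s \geq 0}\bigl(\trop F_s(x_j^{(i)}) - rs\bigr).
$$

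The substantive step is this last equality. The shape $\lambda(1,r)$ is obtained from $(n-1)^m$ by successively stripping $r$ ribbons of size $\leq n$ along the bottom, and I would interpret each cylindric semistandard tableau $T$ of shape $\D_1(\lambda(1,r))$ as encoding a cutoff $s(T)$ together with a tableau contributing to $F_{s(T)}$, with the weight transforming by exactly $-r\cdot s(T)$. The periodic identification by the vector $(n-1,1)$ on the cylinder $\C_1$ matches the horizontal width of the original rectangle, and it is the wrap-around degree of a cylindric filling that should play the role of the cutoff $s$. The unique min-realizing $T$ on the left would then correspond to the pair $(s^*, T^*)$ realizing the outer $\max$ and the inner tropical $\min$ on the right, yielding the identity of min/max values.

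The main obstacle I anticipate is carrying out this combinatorial identification cleanly, since the ribbon-stripping producing $\lambda(1,r)$ is not uniform in size and the interaction between cylindric wrap-around and the Lam--Pylyavskyy tableaux for $F_s$ needs care. As a backup, both sides are $R$-matrix invariants (by Theorem \ref{thm:cylindric} for the cylindric loop Schur side, and by Theorem \ref{th:combR} together with Theorem \ref{thm:Rtrop} for the rigged-configuration side), so one can reduce to checking the identity on any chosen representative of each combinatorial $R$-orbit, for example a classical highest-weight path, where the rigged configuration, the energies $E^{1,s}(p)$, and the cylindric tableaux can all be written down explicitly and the verification becomes a direct combinatorial calculation.
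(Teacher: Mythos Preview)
Your overall strategy is the right one and matches the paper's: both use Theorem~\ref{th:E=Q} to convert $\nu^{(1)}$ into the sequence $E^{1,s}(p)$ and then Theorem~\ref{thm:LP} to express these energies tropically. Your telescoped reformulation $\trop s^{(0)}_{\D_1(\lambda(1,r))}=\sum_{i>r}\nu^{(1)}_i$ together with the identity $\sum_{i>r}\mu_i=\max_{s}(Q_s(\mu)-rs)$ is correct and is essentially the Legendre-dual version of what the paper does. There are, however, two genuine gaps.

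First, you do not use the special feature of the $s=1$ case that makes the problem tractable: because the vertical shift on the cylinder $\C_1$ is $1$, a cylindric semistandard tableau of shape $\D_1(\lambda(1,r))$ is determined by a single row, and one gets the closed form $s^{(0)}_{\D_1(\lambda(1,r))}=\tau^{(0)}_{(n-1)m-rn}$ (the generating function of weakly increasing words in $\{1,\dots,m\}$ with no letter repeated more than $n-1$ times). This simplification is what the paper's Lemma~\ref{lem:tau} provides, and without it your ``wrap-around degree'' bijection has no concrete target.

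Second, and more seriously, your ``substantive step'' needs exactly the ingredient you do not mention. After the simplification, writing $A_i=\Theta^{(0)}_{(n-1)m-in}$, Corollary~\ref{cor:E} gives $E_s=\min_i(is+A_i)$, and your identity becomes $A_r=\max_s\min_i\bigl((i-r)s+A_i\bigr)$. One direction is trivial (take $i=r$), but equality is precisely the statement that the tropical Legendre transform is an involution on the sequence $(A_i)$, which holds if and only if the $A_i$ are convex: $A_{i-1}+A_{i+1}\ge 2A_i$. The paper isolates this as Lemma~\ref{lem:convex} and then proves the needed convexity (Lemma~\ref{lem:convextau}) by a cell-transfer injection showing $(\tau^{(0)}_N)^2-\tau^{(0)}_{N-n}\tau^{(0)}_{N+n}$ is monomial-positive. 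Your proposed bijection $T\mapsto(s(T),T')$ cannot produce a $\max$--$\min$ identity directly: a weight-preserving map of tableaux only compares two minima, not a maximum of minima. What is really required is this log-concavity, and some argument like cell transfer seems unavoidable.

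Finally, your backup does not work as stated: being a classical highest-weight element is preserved by the combinatorial $R$-matrix (since $R$ commutes with $\tilde e_a$ for $a\in I_0$), so highest-weight paths do not furnish representatives of \emph{all} $R$-orbits, only of the highest-weight ones.
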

The proof is based on the results of \cite{S} and \cite{LP}.

Before embarking on the proof, let us discuss some implications of our theorem.
In order to illustrate the relationship between lengths of rows of $\nu^{(1)}$
and lengths of solitary waves of box-ball systems, we consider the following example.

\begin{example}
The example at the beginning of Section \ref{sec:bbs} corresponds to the following rigged configurations
($\ell\geq 3$).
\begin{center}
\unitlength 12pt
\begin{picture}(25,3.5)
\put(-2,1.5){$\Phi\left((T^{1,\ell})^t(p)\right)=$}
\put(5.8,2.5){$(1^{30})$}
\put(4,0){
\put(5.7,2.5){$22$}
\put(5.7,1.35){$24$}
\put(5.7,0.2){$26$}
\put(7,0){\yng(3,2,1)}
\put(10.7,2.5){$3t$}
\put(9.55,1.35){$4+2t$}
\put(8.4,0.2){$9+t$}
}
\put(19,0){
\put(-1.5,2.5){$-1$}
\put(-1.5,1.35){$-1$}
\put(0,1.1){\yng(2,2)}
\put(2.5,2.5){$-1$}
\put(2.5,1.35){$-2$}
}
\put(25.7,0){
\put(-0.8,2.5){$0$}
\put(0,2.2){\yng(2)}
\put(2.5,2.5){$0$}
}
\end{picture}
\end{center}
Here $t=0,1,\ldots,7$ corresponds to the time presented on the left of the paths
and $(1^{30})$ stands for the single column Young diagram of height 30.

If we compare this with the example at the beginning of Section \ref{sec:bbs},
we obtain the following interpretation.
Each row of $\nu^{(1)}$ corresponds to a solitary wave of $p$ of the same length.
Then, roughly speaking, the corresponding rigging specifies the position of the solitary wave.
Thus, if the rigging behaves like $\sim vt$, then the corresponding solitary wave
propagates at velocity $v$ if there are no scatterings with other waves.
\qed
\end{example}

The general statement of this phenomenon is the following theorem
which is a consequence of the $R$-invariance property of $\Phi$
(Theorem \ref{th:combR}).\footnote{This result is valid for the general case
$p\in B^{r_1,s_1}\otimes B^{r_2,s_2}\otimes\cdots\otimes B^{r_m,s_m}$
without any modification.}

\begin{theorem}[\cite{KOSTY}]
\label{th:ist}
Suppose that we put a sufficiently long tensor product of $u^{r,1}$ on
the right of $p\in B^{1,s_1}\otimes B^{1,s_2}\otimes\cdots\otimes B^{1,s_m}$.
Suppose that we have
\begin{align*}
\Phi(p)=\left\{
(\nu^{(0)}),(\nu^{(1)},J^{(1)}),\ldots, (\nu^{(r)},J^{(r)}),\ldots,(\nu^{(n-1)},J^{(n-1)})
\right\}.
\end{align*}
Then we have
\begin{align*}
\Phi(T^{r,s}(p))=\left\{
(\nu^{(0)}),(\nu^{(1)},J^{(1)}),\ldots, (\nu^{(r)},\hat{J}^{(r)}),\ldots,(\nu^{(n-1)},J^{(n-1)})
\right\},
\end{align*}
i.e., the only difference appears at $\hat{J}^{(r)}$.
Here, for the original string $(\nu^{(r)}_i,J^{(r)}_i)$, the new string is
$(\nu^{(r)}_i,\hat{J}^{(r)}_i)=(\nu^{(r)}_i,J^{(r)}_i+\min(s,\nu^{(r)}_i))$.
\end{theorem}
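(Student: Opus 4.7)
The plan is to prove Theorem \ref{th:ist} by combining the $R$-matrix invariance of $\Phi$ (Theorem \ref{th:combR}) with a direct analysis of how the KSS bijection responds to prepending versus appending the highest-weight element $u^{r,s}$ to a path.

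The first step invokes $R$-invariance. By construction of $T^{r,s}$ in diagram (\ref{eq:def_bbs}), a sequence of combinatorial $R$-moves transforms $u^{r,s} \otimes p$ into $T^{r,s}(p) \otimes u^{r,s}_{\mathrm{out}}$; the fact that $u^{r,s}$ passes unchanged through any string of $u^{r,1}$'s, together with the padding assumption, forces $u^{r,s}_{\mathrm{out}} = u^{r,s}$. Applying Theorem \ref{th:combR} yields
\[
\Phi(u^{r,s} \otimes p) = \Phi\bigl(T^{r,s}(p) \otimes u^{r,s}\bigr).
\]

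The second step establishes two independent comparison principles. \emph{(a) Appending:} every letter of $u^{r,s}$ lies in $\{1,2,\ldots,r\}$, so by inspection of the KSS main algorithm no letter of $u^{r,s}$ can affect $\nu^{(r)}$, which is only touched by letters strictly greater than $r$. Combined with the rule that rows not modified by an insertion step retain their riggings, this shows that appending $u^{r,s}$ to any rigged configuration leaves the shape and riggings of $\nu^{(r)}$ completely unchanged. \emph{(b) Prepending:} inserting $u^{r,s}$ first into the empty configuration produces an initial configuration containing singular length-$s$ strings in each of $\nu^{(a)}$ for $a = 1, \ldots, r-1$; the subsequent insertion of the letters of $q$ interacts with these initial strings. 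A careful bookkeeping -- using the monotone chain $\ell^{(r-1)} \leq \ell^{(r)} \leq \cdots$ in the main algorithm together with the saturation of the $u^{r,s}$-strings at length $s$ -- shows that the net effect on the pre-existing riggings of $\Phi(q)$ is a shift of exactly $+\min(s, \nu^{(r)}_i)$ on each rigging of $\nu^{(r)}$, with all other riggings preserved.

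The third step combines (a), (b), and the displayed identity: applying (a) with $q = T^{r,s}(p)$ and (b) with $q = p$, and equating, the pre-existing riggings in $\nu^{(a)}$ for $a \neq r$ agree on both sides to give $\tilde J^{(a)} = J^{(a)}$, while equality on the $r$-th shape forces $\tilde J^{(r)}_i = J^{(r)}_i + \min(s, \nu^{(r)}_i)$, which is the claimed formula.

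The main obstacle is the prepending claim in step (b), and specifically isolating the precise shift $\min(s, \nu^{(r)}_i)$. The threshold at $s$ reflects the fact that the initial length-$s$ singular strings created by $u^{r,s}$ are available to the recursive singular-string search in the main algorithm only up to length $s$; once an insertion would require a longer companion string, the algorithm falls back on other singular strings and the ``boost'' from the $u^{r,s}$-seeding saturates at $s$. Making this precise requires an induction on the letters of $q$ that tracks both shapes and riggings simultaneously through each KSS insertion step. This combinatorial analysis is the heart of the proof and is carried out in detail in \cite{KOSTY}.
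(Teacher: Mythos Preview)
The paper does not supply its own proof of Theorem~\ref{th:ist}; it simply cites \cite{KOSTY} and remarks in passing that the result ``is a consequence of the $R$-invariance property of $\Phi$ (Theorem~\ref{th:combR}).'' Your outline follows exactly this route, so at the level of overall strategy you are aligned with what the paper suggests and with what \cite{KOSTY} actually does: use $R$-invariance to equate $\Phi(u^{r,s}\otimes p)$ with $\Phi(T^{r,s}(p)\otimes u^{r,s})$, and then analyze separately the effect of prepending and appending $u^{r,s}$ on a rigged configuration.

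That said, the logic of your step~3 does not close as written. Your claim~(a) only asserts that appending $u^{r,s}$ leaves the $r$-th block $(\nu^{(r)},J^{(r)})$ untouched, while claim~(b) asserts that prepending shifts $J^{(r)}$ and preserves all other riggings. For $a>r$ this is enough: neither operation touches $\nu^{(a)}$ at all (letters of $u^{r,s}$ are $\leq r$ and the main algorithm for a letter $c$ only reaches $\nu^{(c-1)},\ldots,\nu^{(1)}$), so equating the two sides immediately gives $(\tilde\nu^{(a)},\tilde J^{(a)})=(\nu^{(a)},J^{(a)})$. But for $a<r$ both operations \emph{do} alter $\nu^{(a)}$ by adding strings, and your claim~(a) says nothing about those blocks. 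To conclude $\tilde J^{(a)}=J^{(a)}$ for $a<r$ you need a companion statement to~(a): appending $u^{r,s}$ to $\Phi(q)$ adds the \emph{same} new strings to $\nu^{(1)},\ldots,\nu^{(r-1)}$ as prepending does, and leaves the riggings of the pre-existing strings in those blocks unchanged. This is not automatic, since in the appending case the KSS insertion of the letters of $u^{r,s}$ takes place in a non-empty configuration and one must check that the singular-string search does not interact with the old strings. The detailed combinatorics for both the $a<r$ blocks and the precise shift $\min(s,\nu^{(r)}_i)$ in the $a=r$ block are handled in \cite{KOSTY}; you correctly flag the latter as the heart of the argument, but the former is also needed to complete step~3.
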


Combining Theorem \ref{th:ist} and the results of \cite{S_2006}, we deduce the following interpretation of $\nu^{(1)}$
which is the subject of our Theorem \ref{thm:main}.
We keep the notations and assumptions of Theorem \ref{th:ist}.
For $b_1\otimes b_2\in B^{1,s_1}\otimes B^{1,s_2}$, define
$\bar{H}(b_1\otimes b_2)=\min(s_1,s_2)-H(b_1\otimes b_2)$.
Removing $\nu^{(0)}$ and $J^{(1)}$ from $\Phi(p)$, we regard the following rigged configuration
\begin{align*}
(\bar{\nu},\bar{J})=\left\{
(\nu^{(1)}),(\nu^{(2)},J^{(2)}), (\nu^{(3)},J^{(3)}),\ldots,(\nu^{(n-1)},J^{(n-1)})
\right\}
\end{align*}
as a $\hat\sl_{n-1}$ rigged configuration.
Let $\bar{\Phi}^{-1}$ be the map obtained by adding 1 to all numbers of all tableaux
obtained from $\Phi^{-1}$.
Then we define
\[
\bar{\Phi}^{-1}(\bar{\nu},\bar{J})=\bar{b}_1\otimes\cdots\otimes \bar{b}_M
\in B^{1,\nu^{(1)}_1}\otimes\cdots\otimes B^{1,\nu^{(1)}_M}\qquad
(M=\text{length of }\nu^{(1)}).
\]
Here we assume that $\nu^{(1)}_1\leq \nu^{(1)}_2\leq\cdots$.
In the following description, we will identify the row
$\bar{b}_k=
\begin{array}{|c|c|c|c|}
\hline
c_1&c_2&\cdots&c_l\\
\hline
\end{array}$ and the solitary wave
$\fbox{$c_l$}\otimes\cdots\otimes\fbox{$c_2$}\otimes\fbox{$c_1$}$.
Then, for a sufficiently large $N$, the path $(T^{1,\ell})^N(p)$ contains
the solitons $\bar{b}_1,\bar{b}_2,\ldots,\bar{b}_M$ from left to right.
Moreover, the length of the spacing between $\bar{b}_{k}$ and $\bar{b}_{k+1}$
is no less than $\bar{H}(\bar{b}_{k}\otimes\bar{b}_{k+1})$.

According to the above observations we see that Theorem \ref{thm:main} provides the lengths of the solitary waves contained in any $p$
which will appear after sufficiently many applications of time evolutions.
As a corollary, we can easily obtain the upper bound for the number of solitons contained in a path.

Let $\lceil x \rceil$ be the ceiling function which gives the smallest integer not smaller than $x$
and $\lfloor x \rfloor$ be the floor function which gives the largest integer not exceeding $x$.
Recall that the partition $\lambda(1,r)$ is obtained by removing length $ \leq n$ ribbon strips
$r$ times from the rectangle $(n-1)^m$ (see Section \ref{sec:formulation}).  It is not hard to see that in this  ($s = 1$) case all the ribbon strips removed do indeed have length $n$, except possibly the last ribbon strip.
Thus if the partition $\lambda(1,r)$ is nonempty then it has $\lfloor m-rn/(n-1) \rfloor$ parts equal to $(n-1)$, and a single part equal to $n-1-r'$ where $r \equiv r' \mod (n-1)$ and $1 \leq r' \leq n-1$.

\begin{corollary}
\label{cor:number_of_solitons}
The number of solitons contained in a length $m$ path of type $\hat{\sl}_n$ is at most
\[
\left\lceil
\frac{(n-1)m}{n}
\right\rceil.
\]
\end{corollary}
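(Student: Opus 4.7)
The plan is to combine Theorem \ref{thm:main} with the explicit description of the ribbon-stripping sequence $\lambda(1,0) \supset \lambda(1,1) \supset \cdots$ recalled just before the corollary.

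First I would note that by Theorem \ref{thm:main},
\[
\nu^{(1)}_r = \trop s^{(0)}_{\D_1(\lambda(1,r-1))} - \trop s^{(0)}_{\D_1(\lambda(1,r))}.
\]
Since the convention $s^{(r)}_{D} = 1$ for $D = \emptyset$ set up in Section \ref{sec:loop} makes both sides vanish once $\lambda(1,r-1)$ is empty, we get $\nu^{(1)}_r = 0$ in that range. Consequently the number of nonzero parts of $\nu^{(1)}$, which is the number of solitons, is bounded by
\[
r_0 := \min\bigl\{r \geq 0 \,\big|\, \lambda(1,r) = \emptyset\bigr\}.
\]

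Next I would bound $r_0$. The starting partition is the rectangle $(n-1)^m$ with $|\lambda(1,0)| = m(n-1)$ boxes, and by construction each step removes a ribbon strip of at most $n$ boxes. The key combinatorial claim, asserted in the paragraph preceding the corollary, is that in the $s=1$ case every ribbon removed has size exactly $n$, except possibly the very last. Granting this, one writes $m(n-1) = (r_0-1)n + b$ with $1 \leq b \leq n$, giving $r_0 = \lceil m(n-1)/n \rceil$, which combined with the first step completes the proof.

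The main obstacle is verifying the size claim. I would prove by induction on $r$ the shape description from the paper: $\lambda(1,r)$ consists of some number of rows of length $n-1$ topped (at the bottom) by at most one row of length $n-1-r'$, where $r \equiv r' \pmod{n-1}$. The inductive step is a direct analysis: because the complement $\lambda(1,r+1)$ must itself be a partition, the removed ribbon must occupy the rightmost boxes of each row it touches, and the no-$2\times 2$ condition then forces its overlap between consecutive rows to be a single column. Combining these two constraints with the shape of $\lambda(1,r)$ pins the maximal such ribbon down uniquely and shows it has exactly $n$ boxes as long as $|\lambda(1,r)| \geq n$. This is the step where care is needed, since one must check separately the two cases where the bottom row has length $n-1$ (so the ribbon extends into one box of the row above) and where it has length $n-1-r' < n-1$ (so the ribbon extends into two boxes). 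Once that lemma is in hand, the corollary follows by the arithmetic already indicated.
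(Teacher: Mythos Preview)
Your approach is correct and matches the paper's, which argues in one line that $\lambda(1,r)=\emptyset$ for $r>(n-1)m/n$ forces $\nu^{(1)}_{r+1}=0$ via Theorem~\ref{thm:main}; your inductive verification of the shape of $\lambda(1,r)$ simply fills in what the paper leaves as ``not hard to see'' in the paragraph preceding the corollary. One small slip: in your second case the ribbon picks up $r'+1$ boxes from the row above (anywhere from $2$ to $n-1$), not literally two, but this does not affect the argument.
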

\begin{proof}
According to Theorem \ref{thm:main}, if we have
$s^{(0)}_{\D_1(\lambda(1,r))}=1$, then $\nu^{(1)}_{r+1}=0$.  Since $\lambda(1,r) = \emptyset$ for $r > (n-1)m/n$, we obtain the desired expression.
\end{proof}

Finally let us say a few words about the explicit piecewise linear formula
for $\Phi^{-1}$ obtained in \cite{KSY,S_2006}.
One of the key steps of the construction is to take the tropical limit
of the so-called tau functions for the KP hierarchy \cite{JM}
(see \cite[Section 5]{S:review} for a brief summary of the proof.).
In this step, each string $(\nu^{(a)}_i,J^{(a)}_i)$ is explicitly connected
with a soliton of the ordinary KP hierarchy.

\subsection{Simplified expressions}
\label{sec:simplify}
In the current situation, we have the following realization of
the cylindric loop Schur functions $s^{(0)}_{D(\lambda(1,r))}$.
As in \cite{LP}, we define the following generating function:
\begin{align*}
\tau_k^{(a)}(\x_1,\x_2,\ldots,\x_m) := \sum_{\Gamma = \gamma_1 \leq \dotsc \leq \gamma_k}
\x_{\gamma_1}^{(a)} \x_{\gamma_2}^{(a-1)} \dotsc \x_{\gamma_k}^{(a-k+1)},
\end{align*}
where the sum is taken over all multisets $\Gamma \subseteq \{1,2,\ldots, m\}$ of cardinality $k$
such that no number occurs more than $n-1$ times.
Let us denote the tropicalization of $\tau_k^{(a)}$ by $\Theta_k^{(a)}$;
\begin{align*}
\Theta_k^{(a)} := \trop(\tau_k^{(a)}) = \min_{\Gamma = \gamma_1 \leq \dotsc \leq \gamma_k}
\left\{ x_{\gamma_1}^{(a)} +x_{\gamma_2}^{(a-1)}+ \cdots +x_{\gamma_k}^{(a-k+1)} \right\}.
\end{align*}

\begin{lemma} \label{lem:tau}
We have 
$$
s_{\D_1(\lambda(1,r))}^{(0)} = \tau_{(n-1)m-rn}^{(0)}.
$$
\end{lemma}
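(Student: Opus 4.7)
The strategy is to reduce the computation of $s_{\D_1(\lambda(1,r))}^{(0)}$ to that of the cylindric loop Schur function of a single row, where a direct tableaux-to-multisets bijection produces exactly $\tau_k^{(0)}$. Set $k := (n-1)m - rn$. The plan has two steps, the first an identification of cylindric shapes and the second a direct computation.

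First, I aim to show the equality of cylindric shapes
\[
\D_1(\lambda(1,r)) = \D_1((k)) \quad \text{in } \C_1,
\]
where $((k))$ is the single-row partition of length $k$. Using the explicit description recalled just before the lemma, $\lambda(1,r) = ((n-1)^q, n-1-r')$ with $q = \lfloor m - rn/(n-1)\rfloor$ and $r'\in\{1,\ldots,n-1\}$ congruent to $r$ modulo $n-1$, so a direct count gives $|\lambda(1,r)| = q(n-1)+(n-1-r') = k$. To see the cylindric shapes coincide, I will compare the infinite periodic extensions on $\Z^2$ under the shift $(n-1,1)$. In the extension of $\lambda(1,r)$, the row at height $b=-1$ receives, from the $i$-th row of the fundamental domain translated $i-1$ times by $(n-1,1)$, the column interval $\{(i-1)(n-1)+1,\ldots,(i-1)(n-1)+\lambda(1,r)_i\}$. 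As $i$ ranges over $1,\ldots,q+1$ these intervals concatenate without overlap into $\{1,2,\ldots,k\}$, which is exactly the row $b=-1$ in the periodic extension of $((k))$. Periodicity then propagates this agreement to every other row.

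Second, I will directly compute $s_{\D_1((k))}^{(0)}$. A semistandard tableau $T$ of $\D_1((k))$ is determined by its entries $\gamma_1,\gamma_2,\ldots,\gamma_k$ along the row $b=-1$, since all other entries are forced by the periodicity $T(a,b)=T(a+n-1,b+1)$. The row condition gives $\gamma_1\le\gamma_2\le\cdots\le\gamma_k$, while the strict column condition $T(a,b)<T(a,b-1)$, applied between the boxes $(a,-1)$ and $(a,0)\sim(a-(n-1),-1)$ for $a\ge n$, translates into $\gamma_j<\gamma_{j+n-1}$ for $1\le j\le k-(n-1)$. Together with weak increase this is exactly the condition that no value of $\gamma$ repeats $n$ or more times, i.e.\ the admissibility condition on the multiset $\Gamma$ in the definition of $\tau_k^{(a)}$. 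Since the content of $(a,-1)$ equals $1-a$, the $0$-weight of $T$ is $\prod_{a=1}^{k}\x_{\gamma_a}^{(1-a)}$, precisely the summand indexed by $\Gamma=\{\gamma_1\le\cdots\le\gamma_k\}$ in $\tau_k^{(0)}$. Summing gives $s_{\D_1((k))}^{(0)}=\tau_k^{(0)}$, which combined with Step 1 completes the proof. The only real obstacle is the bookkeeping of cylindric coordinates — keeping the conventions $(a,b)=(j,-i)$, $c=-a-b$, and shift direction $(n-1,1)$ consistent — and in particular noting that $((k))$ with $k>n-1$ is an acceptable partition in the construction of $\D_1$ because its boxes, all lying in a single row, are never identified under the shift.
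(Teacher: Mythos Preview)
Your argument is correct and follows essentially the same route as the paper's proof: both exploit the fact that the vertical shift in $\C_1$ is $1$, so every row of the periodic extension of $\lambda(1,r)$ has exactly $k=(n-1)m-rn$ boxes, and the cylindric semistandard condition reduces to the single-row condition ``weakly increasing with no value repeated more than $n-1$ times,'' which is precisely the indexing set in $\tau_k^{(0)}$. The only difference is cosmetic: you pass through the auxiliary object $\D_1((k))$, which requires a mild extension of the paper's convention $\lambda_1=n-s$ in the definition of $\D_s(\lambda/\mu)$, whereas the paper argues directly on the rows of $\D_1(\lambda(1,r))$ without naming that intermediate shape.
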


\begin{proof}
The shape $\D_1(\lambda(1,r))$ is obtained by propagating $\lambda(1,r)$ by the shift $(n-1,1)$.  Since the vertical shift is by 1, there is a bijection between boxes of $\lambda(1,r)$ and boxes in any single row of $\D_1(\lambda(1,r))$.  For example, if $n = 4, m = 4$, and $r = 1$, we have $\lambda(1,1) = (3,3,2)$.  The cylindric shape $\D_1(\lambda(1,r))$ looks like
\[
\Yvcentermath1\Yboxdim8pt
\cdots
\young(:::::::::\X\X\X,::::::\X\X\X\X\X\X,:::\X\X\X\X\X\X\X\X,\X\X\X\X\X\X\X\X,\X\X\X\X\X,\X\X)
\cdots
\]
In this case $\lambda(1,r)$ has 8 boxes, and so does each row of $\D_1(\lambda(1,r))$.

Let $S$ be the cylindric semistandard tableau with shape $\D_1(\lambda(1,r))$.  By the above observation, the weight of $S$ is simply the weight of any single row $Z$ of $S$. Since the width of $\lambda(1,r)$ is $(n-1)$, the condition that $S$ is semistandard translates exactly to the condition that no number occurs in the weakly increasing sequence $Z$ more than $n-1$ times.  But the definition of $\tau_{(n-1)m-rn}^{(0)}$ is exactly the generating function of such rows.
\end{proof}

It follows from this and Theorem \ref{thm:cylindric} that $\tau_{(n-1)m-rn}^{(0)}$ lies in $\LSym_m$.

Now we see that Theorem \ref{thm:main} is equivalent to the following proposition.

\begin{proposition}
\label{prop:simple_formula}
Let $p\in B^{1,s_1}\otimes\cdots\otimes B^{1,s_m}$.
Then the $r$-th part of $\nu^{(1)}$ of $\Phi(p)$ is
\[
\nu^{(1)}_r=
\Theta^{(0)}_{(n-1)m-(r-1)n}(x_1,x_2,\ldots,x_m)-
\Theta^{(0)}_{(n-1)m-rn}(x_1,x_2,\ldots,x_m).
\]
\end{proposition}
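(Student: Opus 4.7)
The proof will combine the two inputs identified in the introduction: Sakamoto's energy identity (Theorem~\ref{th:E=Q}) and the explicit Lam--Pylyavskyy formula for the energy function of a tensor product of one-row crystals (Theorem~\ref{thm:LP}). The strategy is to recognize both the target equation and the explicit form of the energy as discrete Legendre transforms of the same concave piecewise-linear function of $s$, namely $s \mapsto E^{1,s}(p)$.

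First, I would pass to tail sums. Set $T_r := \nu^{(1)}_{r+1} + \nu^{(1)}_{r+2} + \cdots$, so that the proposition becomes equivalent to the identity $T_r = \Theta^{(0)}_{(n-1)m-rn}$ for every $r \geq 0$, since $\nu^{(1)}_r = T_{r-1} - T_r$ telescopes. As the parts of $\nu^{(1)}$ are weakly decreasing, the sequence $(T_r)$ is convex in $r$ with first differences $T_{r-1} - T_r = \nu^{(1)}_r$. A direct computation, splitting the sum $\sum_i \min(s,\nu^{(1)}_i)$ according to whether $\nu^{(1)}_i \geq s$ or $\nu^{(1)}_i < s$, gives
\[
Q_s(\nu^{(1)}) \;=\; \min_{r \geq 0} \bigl\{ sr + T_r \bigr\},
\]
exhibiting $Q_s(\nu^{(1)})$ as the discrete Legendre transform of $(T_r)$. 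By Theorem~\ref{th:E=Q}, the left-hand side equals $E^{1,s}(p)$.

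Next, I would invoke Theorem~\ref{thm:LP}, combined with Lemma~\ref{lem:tau}, to obtain the parallel tropical identity
\[
E^{1,s}(p) \;=\; \min_{r \geq 0} \bigl\{ sr + \Theta^{(0)}_{(n-1)m-rn} \bigr\}.
\]
Equating the two expressions for $E^{1,s}(p)$, the sequences $(T_r)$ and $(\Theta^{(0)}_{(n-1)m-rn})$ have the same discrete Legendre transform. A convex sequence is recovered uniquely from its Legendre transform via the inverse formula $T_r = \max_{s \geq 0} \bigl\{ E^{1,s}(p) - sr \bigr\}$, so the desired equality $T_r = \Theta^{(0)}_{(n-1)m-rn}$ will follow once both sequences are known to be convex. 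Convexity of $(T_r)$ is automatic from the monotonicity of $\nu^{(1)}$; I would establish convexity of $r \mapsto \Theta^{(0)}_{(n-1)m-rn}$ by a direct rearrangement argument on multisets $\Gamma_\pm$ of sizes $(n-1)m - (r \pm 1) n$ that realize $\Theta^{(0)}_{(n-1)m-(r \pm 1)n}$, redistributing the entries of $\Gamma_- \cup \Gamma_+$ into two valid multisets of the intermediate size whose total weight is no larger.

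The hardest step should be producing the min-expression for $E^{1,s}(p)$ out of Theorem~\ref{thm:LP}. The Lam--Pylyavskyy formula expresses the energy as the tropicalization of a specific loop Schur function, and rewriting it as $\min_{r \geq 0}\{sr + \Theta^{(0)}_{(n-1)m-rn}\}$ will require either a Jacobi--Trudi-type expansion inside $\LSym_m$ or a direct combinatorial bijection between the semistandard tableaux of the relevant rectangular shape and the pairs (non-negative integer $r$, multiset of size $(n-1)m-rn$) that label the summands in $\tau^{(0)}_{(n-1)m-rn}$. If this direct route turns out to be too technical, an alternative is to combine Theorem~\ref{thm:LP} with Theorem~\ref{th:ist}: applying $T^{1,\infty}$ sufficiently many times separates the solitons, so the evolved path becomes a concatenation of isolated soliton blocks, and each $\nu^{(1)}_r$ can then be extracted from this simplified configuration by evaluating the LP formula directly.
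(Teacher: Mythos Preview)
Your approach is essentially the paper's own proof, rephrased in the language of discrete Legendre transforms: the paper's Lemma~\ref{lem:convex} is precisely the Legendre-inversion step you describe (conjugating partitions is the same operation), and the ``rearrangement argument'' you propose for the convexity of $r\mapsto\Theta^{(0)}_{(n-1)m-rn}$ is exactly what the paper carries out in Lemma~\ref{lem:convextau} via cell transfer.

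Your only misstep is the final paragraph. In this paper, Theorem~\ref{thm:LP} already \emph{is} the min-formula
\[
E_\ell=\min_{0\le i\le (n-1)m}\Bigl(\lceil i/n\rceil\cdot\ell+\Theta^{(0)}_{(n-1)m-i}\Bigr),
\]
and Corollary~\ref{cor:E} reduces it to $\min_{r\ge 0}\{r\ell+\Theta^{(0)}_{(n-1)m-rn}\}$ by a one-line monotonicity observation. There is no loop-Schur expansion or Jacobi--Trudi manipulation left to do, and Lemma~\ref{lem:tau} plays no role here (it is only used to pass between Proposition~\ref{prop:simple_formula} and the cylindric formulation of Theorem~\ref{thm:main}). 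So what you flagged as the hardest step is in fact already handed to you.
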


\begin{example}
Let $n = 2$ and $m = 4$.  Then we have
\begin{align*}
\Theta_{4}^{(0)} &= x_1^{(2)} + x_2^{(1)} + x_3^{(2)} + x_4^{(1)},\\
\Theta_{2}^{(0)} &=
\min(x_1^{(2)}+x_2^{(1)}, x_1^{(2)}+x_3^{(1)}, x_1^{(2)}+x_4^{(1)},
x_2^{(2)}+x_3^{(1)}, x_2^{(2)}+x_4^{(1)}, x_3^{(2)}+x_4^{(1)}).
\end{align*}
Let the tableau representation of the path $p = b_1 \otimes b_2 \otimes b_3\otimes b_4$ be
\[
p=\fbox{$1^{x_4^{(2)}}2^{x_4^{(1)}}$}\otimes
\fbox{$1^{x_3^{(1)}}2^{x_3^{(2)}}$}\otimes
\fbox{$1^{x_2^{(2)}}2^{x_2^{(1)}}$}\otimes
\fbox{$1^{x_1^{(1)}}2^{x_1^{(2)}}$}
=:\fbox{$1^a2^b$}\otimes\fbox{$1^c2^d$}\otimes\fbox{$1^e2^f$}\otimes\fbox{$1^g2^h$}.
\]
Then we have
\begin{align*}
\Theta_{4}^{(0)}&=h+f+d+b,\\
\Theta_{2}^{(0)}&=\min(h+f,\,h+c,\,h+b,\,e+c,\,e+b,\,d+b).
\end{align*}
According to Proposition \ref{prop:simple_formula}, we have
$\nu^{(1)}_1=\Theta_{4}^{(0)}-\Theta_{2}^{(0)}$ and $\nu^{(1)}_2=\Theta_{2}^{(0)}$.
Furthermore, it also follows that $\nu^{(1)}_3 = 0$.
Note that these are also the lengths of the two solitons in the corresponding box-ball system.
\qed
\end{example}

\begin{example}
Let $n = 3$ and $m = 3$.  Then we have
\begin{align*}
\Theta_{6}^{(0)} &= x^{(3)}_1+x^{(2)}_1+x^{(1)}_2+x^{(3)}_2+x^{(2)}_3+x^{(1)}_3,\\
\Theta_{3}^{(0)} &= \min\left\{
x^{(3)}_1+x^{(2)}_1+x^{(1)}_2,\,x^{(3)}_1+x^{(2)}_1+x^{(1)}_3,\,x^{(3)}_1+x^{(2)}_2+x^{(1)}_2,\,x^{(3)}_1+x^{(2)}_2+x^{(1)}_3,
\right.\\
&\hspace{14.5mm}
\left.
x^{(3)}_1+x^{(2)}_3+x^{(1)}_3,\,x^{(3)}_2+x^{(2)}_2+x^{(1)}_3,\,x^{(3)}_2+x^{(2)}_3+x^{(1)}_3
\right\}.
\end{align*}
Let the tableau representation of the path $p = b_1 \otimes b_2 \otimes b_3$ be
\[
p=\fbox{$1^{x_3^{(3)}}2^{x_3^{(1)}}3^{x_3^{(2)}}$}\otimes
\fbox{$1^{x_2^{(2)}}2^{x_2^{(3)}}3^{x_2^{(1)}}$}\otimes
\fbox{$1^{x_1^{(1)}}2^{x_1^{(2)}}3^{x_1^{(3)}}$}
=:\fbox{$1^a2^b3^c$}\otimes\fbox{$1^d2^e3^f$}\otimes\fbox{$1^g2^h3^i$}.
\]
In this coordinate, we have
\begin{align*}
\Theta_{6}^{(0)} &=i+h+f+e+c+b,\\
\Theta_{3}^{(0)} &=\min\{i+h+f,\,i+h+b,\,i+d+f,\,i+d+b,\\
&\hspace{14.5mm}
i+c+b,\,e+d+b,\,e+c+b\}.
\end{align*}
Then by Proposition \ref{prop:simple_formula}, we have $\nu^{(1)}_1=\Theta_{6}^{(0)}-\Theta_{3}^{(0)}$
and $\nu^{(1)}_2=\Theta_{3}^{(0)}$.
Compare with Example \ref{ex:n=3m=3}.
\qed
\end{example}

\subsection{Energy as a tropical polynomial}
\label{sec:energy-tropical}
Recall that in Definition \ref{def:energy} we defined the quantity $E^{r,s}(p)=\sum_{k=1}^m H(u^{(k)}\otimes b_k)$ for $p = b_1 \otimes b_2 \otimes \cdots \otimes b_m$, where $u^{(1)} := u_{r,s}$.  We now define the piecewise-linear function $E_\ell(x_1,\ldots,x_m)$ to be equal to $E^{1,\ell}$, expressed in terms of the variables $x_j^{(i)}$, as usual.  In the following, we caution the reader that our $b_1 \otimes b_2 \otimes \cdots \otimes b_m$ corresponds to the tensor product $b_m \otimes \cdots \otimes b_2 \otimes b_1$ in \cite{LP}.

\begin{theorem}\label{thm:LP}
We have
\begin{align}\label{eq:LP_formula}
E_{\ell}(x_1, \ldots, x_m)= \min_{0 \leq i \leq (n-1)m}\left( \left\lceil\frac{i}{n}\right\rceil  \cdot \ell + \Theta^{(0)}_{(n-1)m-i}(x_1,\ldots,x_m)\right).
\end{align}
\end{theorem}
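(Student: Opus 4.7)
The plan is to lift the piecewise-linear function $E_\ell$ to a subtraction-free rational expression $\mathcal{E}_\ell \in \LSym_m$ and then expand $\mathcal{E}_\ell$ positively in terms of the generating functions $\tau_k^{(0)}$ from Section \ref{sec:simplify}. The existence of such a lift is essentially the content of \cite{LP}: using the formula $H(b\otimes b') = \trop(\kappa_1)(x,y)$ from Section \ref{sec:tropical} together with Theorem \ref{thm:Rtrop}, one iterates the birational $R$-matrix against a carrier initialized at the birational lift of $u^{1,\ell}$ and multiplies together the resulting $\kappa_1$-factors. The result is a loop symmetric function $\mathcal{E}_\ell$ with $\trop(\mathcal{E}_\ell) = E_\ell$.

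The next step is to expand $\mathcal{E}_\ell$ monomial by monomial. Each factor $\kappa_1(\mathbf{u}^{(k)}, \x_k)$ is a sum of $n$ monomials indexed by a cut point $s_k\in\{0,\dots,n-1\}$, so choosing a cut point at each position produces a monomial of $\mathcal{E}_\ell$. Collecting all monomials that absorb the same multiset $\Gamma\subseteq\{1,\dots,m\}$ of carrier-variable indices, with multiplicities at most $n-1$ and cardinality $(n-1)m - i$, yields exactly one term of $\tau^{(0)}_{(n-1)m-i}$ times a monomial $\mathcal{M}_{i,\ell}$ in the initial carrier data. The goal is then to show that $\mathcal{M}_{i,\ell}$ is independent of $\Gamma$ and satisfies $\trop(\mathcal{M}_{i,\ell}) = \lceil i/n\rceil\cdot\ell$, which would give the expansion
$$\mathcal{E}_\ell \;=\; \sum_{i=0}^{(n-1)m} \mathcal{M}_{i,\ell}\cdot \tau^{(0)}_{(n-1)m-i}.$$
Tropicalizing then yields the theorem, since $+$ becomes $\min$ and products become sums for subtraction-free expressions.

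The analysis of $\mathcal{M}_{i,\ell}$, and in particular the appearance of the ceiling function, is the main obstacle; I would proceed by induction on $m$ together with direct tracking of the birational $R$-matrix dynamics. The ceiling reflects the cyclic structure of the carrier: $n$ successive absorptions return the carrier to a state proportional to its initial one while contributing one full power of $\ell$ to the product, whereas fewer than $n$ absorptions contribute no $\ell$-dependent factor. The combinatorial identity that $\lceil i/n\rceil - \lceil (i-1)/n\rceil$ equals $1$ when $n$ divides $i$ and $0$ otherwise would then drive the induction, with the base case $m=1$ being an immediate computation directly from the definition of $\kappa_1$.
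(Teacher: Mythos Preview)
Your overall strategy---lift $E_\ell$ to a subtraction-free rational expression, expand in the $\tau^{(0)}_k$, and tropicalize---is exactly the paper's approach. The paper, however, does not rederive the $\tau$-expansion: it invokes \cite[Lemma~3.1]{LP} directly, which asserts that the rational lift of $E_\ell$ equals
\[
\sum_{i=0}^{(n-1)m} \tau^{(0)}_{(n-1)m-i}(\x_1,\ldots,\x_{m})\;\x_{m+1}^{(m+1)}\x_{m+1}^{(m+2)}\cdots\x_{m+1}^{(m+i)},
\]
where the initial carrier $u^{1,\ell}$ is encoded as an additional variable set $\x_{m+1}$. Tropicalizing with $x_{m+1}^{(m+1)}=\ell$ and all other $x_{m+1}^{(j)}=0$ immediately yields $\lceil i/n\rceil\cdot\ell$ for the carrier contribution, since exactly $\lceil i/n\rceil$ of the indices $m+1,\ldots,m+i$ are congruent to $m+1$ modulo $n$. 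No induction or dynamical tracking is needed.

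Your proposed rederivation has a gap at precisely the step you flag as the ``main obstacle.'' You write the lift as a product $\prod_k \kappa_1(\mathbf{u}^{(k)},\x_{\bullet})$ and propose to expand it monomially by choosing cut points. But the intermediate carriers $\mathbf{u}^{(k)}$ for $k\geq 2$ are rational functions of \emph{all} the variables, with denominators coming from the birational $R$-matrix \eqref{E:s}. Choosing a cut point in each $\kappa_1$ therefore does not produce a monomial in the original variables $\x_1,\ldots,\x_m,\x_{m+1}$; it produces a rational expression. That these denominators cancel and the product is in fact the polynomial above is nontrivial---it is essentially the content of \cite[Lemma~3.1]{LP}, proved there by identifying the product with a loop Schur function in $m+1$ variable sets. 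Your induction-on-$m$ sketch does not address this cancellation, and the heuristic that ``$n$ successive absorptions return the carrier to a state proportional to its initial one'' is not literally true for the birational dynamics (only up to rational factors). The cleanest fix is simply to cite \cite[Lemma~3.1]{LP} for the expansion and then perform the one-line substitution, as the paper does.
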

\begin{proof}
Lemma 3.1 in \cite{LP} gives a rational lifting of the energies $E_\ell(x_1,\ldots,x_m)$.  Taking into account the reversal of the tensor product, we have that $E_\ell(x_1,\ldots,x_m)$ is the tropicalization of 
$$
\sum_{i=0}^{(n-1)m} \tau^{(0)}_{(n-1)m-i}(\x_1,\x_2,\ldots,\x_{m}) \x_{m+1}^{(m+i)} \x_{m+1}^{(m+i-1)}\cdots 
\x_{m+1}^{(m+2)}\x_{m+1}^{(m+1)}
$$
%
%
where when we pass to tropicalization, we take 
$$(x_{m+1}^{(0)},x_{m+1}^{(1)},\ldots,x_{m+1}^{(n-1)}) = (0,0,\ldots,0,\ell,0,\ldots,0) \text{ with } x_{m+1}^{(m+1)} = \ell.$$
After tropicalization we get 
$x_{m+1}^{(m+1)} + x_{m+1}^{(m+2)}\cdots + x_{m+1}^{(m+i)} = \left\lceil\frac{i}{n}\right\rceil \cdot \ell$. 
\end{proof}

The formula in Theorem \ref{thm:LP} can be made more efficient by removing some of the terms in the minimum.  Namely, we note that the minimum is achieved when $i$ is divisible by $n$.

\begin{corollary} \label{cor:E}
We have
\begin{align}\label{eq:rational_E}
E_\ell(x_1,\ldots,x_m) = \min_{0 \leq i \leq {\frac{(n-1)m}{n}}}\left( i \cdot \ell + \Theta^{(0)}_{(n-1)m-in}(x_1,\ldots,x_m)\right).
\end{align}
\end{corollary}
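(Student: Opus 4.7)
My plan is to derive Corollary \ref{cor:E} from Theorem \ref{thm:LP} by a monotonicity-plus-regrouping argument. The content of the statement is that Theorem \ref{thm:LP}'s minimum is always attained at some $i$ with $n \mid i$, so that the other terms can be removed from the minimum without changing its value.

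The key input I would establish first is a monotonicity lemma: for non-negative $x$, the sequence $\Theta^{(0)}_0(x) \leq \Theta^{(0)}_1(x) \leq \Theta^{(0)}_2(x) \leq \cdots$ is weakly increasing. This follows immediately from the definition of $\tau_k^{(a)}$: any multiset $\Gamma = (\gamma_1 \leq \cdots \leq \gamma_{j+1})$ realizing $\Theta^{(0)}_{j+1}$ truncates to a valid multiset $(\gamma_1,\ldots,\gamma_j)$ for $\tau^{(0)}_j$ whose tropical weight differs only by deletion of the single non-negative summand $x_{\gamma_{j+1}}^{(-j)}$, giving $\Theta^{(0)}_j \leq \Theta^{(0)}_{j+1}$.

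Given this, I would partition the index range $\{0,1,\ldots,(n-1)m\}$ in Theorem \ref{thm:LP} according to the value $k := \lceil i/n \rceil$. Within each block the coefficient of $\ell$ is the constant $k$, so by the monotonicity lemma the block minimum is attained at the largest $i$ in the block. For $k \leq \lfloor (n-1)m/n \rfloor$ this largest $i$ is exactly $kn$, producing precisely the term $k\ell + \Theta^{(0)}_{(n-1)m - kn}$ appearing in Corollary \ref{cor:E}. Taking the minimum over $k$ then gives the desired formula whenever $n$ divides $(n-1)m$, and one obtains the Corollary directly.

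The only delicate point is the boundary block when $n \nmid (n-1)m$: writing $(n-1)m = qn+r$ with $0<r<n$, the final block $k = q+1$ consists of indices $(qn, (n-1)m]$, whose largest element is $(n-1)m$ itself (not a multiple of $n$), and the block's minimum is $(q+1)\ell + \Theta^{(0)}_0 = (q+1)\ell$. To reconcile this with Corollary \ref{cor:E}, one reads the stated upper bound $i \leq (n-1)m/n$ as $i \leq \lceil (n-1)m/n\rceil$ together with the natural convention $\Theta^{(0)}_j = 0$ for $j \leq 0$, so that the $k=q+1$ term of the Corollary equals $(q+1)\ell + \Theta^{(0)}_{(n-1)m-(q+1)n} = (q+1)\ell$, precisely matching the boundary-block contribution. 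I expect this final bookkeeping step to be the only place requiring any care; the body of the argument is just regrouping plus the elementary monotonicity lemma.
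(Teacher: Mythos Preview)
Your proposal is correct and essentially identical to the paper's proof: the paper establishes the same monotonicity $\Theta^{(0)}_k \le \Theta^{(0)}_{k+1}$ by the same truncation argument (drop trailing factors of a monomial in $\tau^{(0)}_{k+1}$ to get one in $\tau^{(0)}_k$, using $x_j^{(i)}\ge 0$), then groups the terms of Theorem~\ref{thm:LP} by the value $\lceil i/n\rceil$ and keeps one representative per group. Your discussion of the boundary block when $n\nmid (n-1)m$ is in fact more careful than the paper's, which simply asserts that one obtains~\eqref{eq:rational_E} without addressing this edge case.
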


\begin{proof}
To begin with we claim that if $a>a'$, we have $\Theta^{(0)}_{(n-1)m-a'}\geq \Theta^{(0)}_{(n-1)m-a}$.
Let $k'=(n-1)m-a'$ and $k=(n-1)m-a$.
Let $\x_{\gamma_1}^{(0)} \x_{\gamma_2}^{(-1)} \dotsc \x_{\gamma_{k'}}^{(-k'+1)}$
be a term of $\tau^{(0)}_{k'}$.
By dropping the last $(a-a')$ factors we obtain
$\x_{\gamma_{1}}^{(0)} \x_{\gamma_{2}}^{(-1)} \dotsc \x_{\gamma_{k}}^{(-k+1)}$,
which is easily seen to be one of the monomials occurring in $\tau^{(0)}_{k}$.
Since after tropicalization the variables satisfy $x_j^{(i)}\geq 0$,
the removed terms contribute non-negatively. Thus the claim follows.

Thus in the minimum of (\ref{eq:LP_formula}), for each subset of terms involving the same multiple 
$\left\lceil\frac{i}{n}\right\rceil  \cdot \ell$, we only need to keep one of the terms.
Thus we obtain (\ref{eq:rational_E}).
\end{proof}

\subsection{A lemma for convex tropical polynomials}
For a sequence $$0 = A_0, A_1, \ldots, A_{n-1}, A_{n}$$ of nonnegative integers,
define the tropical polynomial $$A(x) := \min(A_n, x+A_{n-1}, ..., (n-1)x+A_1, nx).$$ 
\begin{lemma} \label{lem:convex}
Suppose $0 = A_0, A_1, \ldots, A_{n-1}, A_{n}$ satisfy the convexity condition $$A_{i-1} + A_{i+1} \geq 2 A_i.$$  
Then the differences $$\Delta_A:=(A(1)-A(0), A(2)-A(1), \ldots)$$ form a partition.
Furthermore, this partition is conjugate to the
partition $$(A_n-A_{n-1}, A_{n-1}-A_{n-2}, \ldots, A_2-A_1, A_1).$$
\end{lemma}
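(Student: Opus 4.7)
The plan is to analyze the piecewise-linear structure of $A(x)$ directly. Set $d_k := A_k - A_{k-1}$ for $1 \leq k \leq n$, with the convention $d_0 := 0$ and $d_{n+1} := +\infty$. The convexity condition $A_{i-1}+A_{i+1} \geq 2A_i$ is equivalent to $d_1 \leq d_2 \leq \cdots \leq d_n$, and since the $A_i$ are nonnegative integers with $A_0 = 0$, each $d_k$ is a nonnegative integer.

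The first step is to identify where each term in the minimum defining $A(x)$ attains that minimum. A pairwise comparison shows that $(n-k)x + A_k \leq (n-k-1)x + A_{k+1}$ iff $x \leq d_{k+1}$, and $(n-k)x + A_k \leq (n-k+1)x + A_{k-1}$ iff $x \geq d_k$; so the term $(n-k)x + A_k$ attains the minimum precisely on $[d_k, d_{k+1}]$, and $A(x)$ is piecewise linear with slope $n-k$ there. Since $A$ is a minimum of affine functions it is concave, so $\Delta_A$ is weakly decreasing; since all slopes lie in $\{0,1,\ldots,n\}$, the entries of $\Delta_A$ are nonnegative. Hence $\Delta_A$ is a partition.

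The crucial observation is that every breakpoint $d_k$ is an integer, so each unit interval $[m-1,m]$ lies entirely within a single linear piece: for any integer $m$ with $d_k + 1 \leq m \leq d_{k+1}$ one has $A(m) - A(m-1) = n - k$. Counting the values of $m$ for which the slope meets a given threshold $j$ and telescoping gives
\[
\#\{\, m \geq 1 : A(m) - A(m-1) \geq j\,\} \;=\; \sum_{k=0}^{n-j}(d_{k+1}-d_k) \;=\; d_{n-j+1}.
\]
By the standard characterization $\lambda^t_j = \#\{i : \lambda_i \geq j\}$ of the conjugate partition, this is exactly the claim $(\Delta_A)^t = (d_n, d_{n-1}, \ldots, d_1) = (A_n - A_{n-1}, A_{n-1} - A_{n-2}, \ldots, A_2 - A_1, A_1)$.

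I do not anticipate any real obstacle: the argument is straightforward bookkeeping around the breakpoints of the concave tropical polynomial $A$. The one step worth emphasizing is the integrality of the $d_k$, which is what keeps each unit interval inside a single linear piece and prevents any fractional contribution to $A(m)-A(m-1)$; without this, unit intervals could straddle a breakpoint and the clean equalities in the display above would fail.
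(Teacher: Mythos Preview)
Your proof is correct and rests on the same underlying observation as the paper's: the tropical polynomial $A(x)$ is piecewise linear with integer breakpoints at the successive differences $d_k = A_k - A_{k-1}$, and on the interval $[d_k,d_{k+1}]$ it has slope $n-k$. The paper carries this out recursively, peeling off the steepest term $nx$ to see that the first $A_1$ entries of $\Delta_A$ equal $n$, then rewriting $A(x) - A_1$ for $x>A_1$ and repeating to count the multiplicity of each value $n-1, n-2, \ldots$ in turn. You instead identify all the breakpoints at once and read off the conjugate partition directly via $\lambda^t_j = \#\{i:\lambda_i \geq j\}$, which telescopes to $d_{n-j+1}$. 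One small point worth making explicit: when you infer from the two neighbor comparisons that $(n-k)x+A_k$ is the overall minimum on $[d_k,d_{k+1}]$, you are implicitly using the convexity of $(A_k)$ to propagate to non-adjacent terms; this is routine but deserves a sentence.
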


\begin{example}
Consider $$A(x)=\min(7, 2+x, 2x).$$
Then its values at $x=0,1,2,\ldots$ are $0,2,4,5,6,7,7,7, \ldots.$,
thus we have 
$$\Delta_A=(2,2,1,1,1,0,0,\ldots).$$
The conjugate partition is $(5,2) = (7-2, 2)$.
\qed
\end{example}

\begin{proof}
It is easy to see that the minimum in $A(x)$ is achieved by the term $nx$ if and only if $A_1 \geq x$.
This means that the first $A_1$ terms in the sequence 
$\Delta_A$ are equal to $n$, while all subsequent ones are smaller than $n$.
Then if we conjugate, we conclude the smallest part of the conjugate partition is $A_1$.
When $x>A_1$, we have
$$A(x) = A_1+ \min((A_n-A_1), x+(A_{n-1}-A_1),\ldots, (n-2)x+(A_2-A_1), (n-1)x).$$
By repeating the same argument we see that the letter $(n-1)$ appears
$A_2-2A_1$ times in the sequence $\Delta_A$.
Note that by the convexity condition, we have $A_0+A_2=A_2\geq 2A_1$.
Next, when $x>A_2-A_1$, we have
$$A(x) = A_2+ \min((A_n-A_2), x+(A_{n-1}-A_2),\ldots, (n-3)x+(A_3-A_2), (n-2)x)$$
and conclude that the letter $(n-2)$ appears $A_3-A_2-(A_2-A_1)=A_3-2A_2+A_1$
times in the sequence $\Delta_A$.
Again we have $A_3-2A_2+A_1\geq 0$ by the convexity relation.
We repeat the same procedure to show the claim.
\end{proof}

\subsection{Log-concavity via cell transfer}
We argue that the tropicalizations of $\tau^{(0)}_{(n-1)m-in}$ satisfy the convexity condition above.

\begin{lemma} \label{lem:convextau}
 We have 
$$
\Theta^{(0)}_{(n-1)m-(i+1)n} + \Theta^{(0)}_{(n-1)m-(i-1)n} \geq 2 \Theta^{(0)}_{(n-1)m-in}.
$$
\end{lemma}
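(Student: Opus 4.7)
The proof will proceed by explicit cell transfer. By Lemma~\ref{lem:tau},
\[
\tau^{(0)}_k(\x)=\sum_{\Gamma}\prod_{j=1}^{k}\x^{(-j+1)}_{\gamma_j},
\]
where the sum runs over weakly increasing sequences $\Gamma=(\gamma_1\le\cdots\le\gamma_k)$ in $\{1,\ldots,m\}$ in which no value appears more than $n-1$ times. Tropicalizing reduces the desired inequality to the following combinatorial statement: for any valid sequences $\alpha$ of length $k-n$ and $\beta$ of length $k+n$, we construct valid length-$k$ sequences $\delta^1,\delta^2$ with $w(\delta^1)+w(\delta^2)\le w(\alpha)+w(\beta)$, where $w(\gamma):=\sum_j x^{(-j+1\,\mathrm{mod}\,n)}_{\gamma_j}$ is the tropical weight.

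The key observation is a color-preservation property: if one removes or inserts a contiguous block of $n$ positions from a weakly increasing sequence, the color $-j+1\bmod n$ at every other position is unchanged, since any $n$ consecutive integers exhaust all residues modulo $n$. Concretely, we let $\delta^2$ be $\beta$ with some contiguous block $(\beta_{j+1},\ldots,\beta_{j+n})$ removed, and $\delta^1$ be $\alpha$ with that same block inserted at a position $q$ of the augmented sequence. As long as $q\equiv j+1\pmod n$, the colors of the transferred block within $\delta^1$ exactly match those of the original block within $\beta$, so every weight is preserved and we obtain $w(\delta^1)+w(\delta^2)=w(\alpha)+w(\beta)$.

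The remaining task is to exhibit such a pair $(j,q)$ for which $\delta^1$ is a valid sequence. This requires the sorted-order conditions $\alpha_{q-1}\le\beta_{j+1}$ and $\beta_{j+n}\le\alpha_q$ (with boundary conventions $\alpha_0=0$ and $\alpha_{k-n+1}=m+1$), together with the multiplicity bound that no value occurs more than $n-1$ times in $\delta^1$. The mod-$n$ congruence supplies roughly $k/n$ candidate pairs, while the uniform bound $\mathrm{mult}_{\alpha}(v)+\mathrm{mult}_{\beta}(v)\le 2(n-1)$ on the merged multiset $\alpha\cup\beta$ limits how concentrated any single value can be. We expect a pigeonhole or Hall-style matching argument, on the bipartite graph whose vertices are eligible blocks and insertion positions, to supply a valid pair. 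The main obstacle lies in configurations where many values of $\alpha$ are simultaneously saturated at multiplicity $n-1$, so that the forbidden values fragment $\beta$ into short runs that do not easily accommodate a length-$n$ block; in such cases we may need to argue inductively on the number of saturated values or to perform a sequence of smaller color-preserving transfers.
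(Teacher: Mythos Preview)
Your proposal correctly identifies the strategy (cell transfer exploiting that a shift by $n$ positions preserves colors modulo $n$) but leaves the central step unfinished: you never establish that a valid pair $(j,q)$ exists. The ``saturated'' configurations you worry about are real, and the vague appeal to pigeonhole or Hall matching is not a proof. As written, the argument is a sketch, not a demonstration.

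The paper avoids exactly this difficulty by a different transfer. Rather than excising a contiguous block of $n$ entries from the long sequence and inserting it into the short one, it \emph{swaps tails} with an index offset of $n$: from $A=(a_1,\ldots,a_{N+n})$ and $B=(b_1,\ldots,b_{N-n})$ it forms
\[
A'_k=(a_1,\ldots,a_k,\,b_{k-n+1},\ldots,b_{N-n}),\qquad
B'_k=(b_1,\ldots,b_{k-n},\,a_{k+1},\ldots,a_{N+n}),
\]
and takes $k_0$ maximal so that both $A'_{k_0}$ and $B'_{k_0}$ are valid length-$N$ sequences (weakly increasing, no value repeated more than $n-1$ times). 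The same color-preservation you noted guarantees weights are preserved. The point is that the tail swap localizes every possible failure (either an order violation $b_{k-n}>a_{k+1}$, or a multiplicity overflow at the single value $b_{k-n}=a_{k+1}$) to the join, and a simple recursive descent on $k$ repairs it in finitely many steps, always terminating at a valid cut. This yields an \emph{injection} from monomial pairs in $\tau^{(0)}_{N-n}\cdot\tau^{(0)}_{N+n}$ to monomial pairs in $(\tau^{(0)}_N)^2$, hence the stronger statement that $(\tau^{(0)}_N)^2-\tau^{(0)}_{N-n}\tau^{(0)}_{N+n}$ is monomial-positive, from which the tropical inequality follows immediately.

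Your block-transfer variant, by contrast, spreads the multiplicity obstruction over \emph{every} value present in the block, which is why you were driven toward a global matching argument you could not close. If you want to salvage the approach, replace block insertion by the offset tail swap; then the existence of a valid cut point becomes a short recursive check rather than a Hall-type problem.
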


\begin{proof}
Let $N=(n-1)m-in$.
Then we will show that the difference
$$
(\tau^{(0)}_{N})^2 - \tau^{(0)}_{N-n} \cdot \tau^{(0)}_{N+n}
$$
is monomial positive. After tropicalization this means that in $2 \Theta^{(0)}_{N}$
the minimum is taken over more terms than in 
$\Theta^{(0)}_{N-n} + \Theta^{(0)}_{N+n}$, and thus the statement follows.

We shall use a variation of the {\it {cell transfer}} argument of \cite{LP0}.
Consider two terms, one a monomial from $\tau^{(0)}_{N+n}$
and the other from $\tau^{(0)}_{N-n}$.
We will provide an injection of such pairs into the set of ordered pairs of monomials in $\tau^{(0)}_{N}$.

Interpret the two terms as two single-row semistandard tableaux $A$ and $B$ of lengths
$N+n$ and $N-n$ respectively and denote them as
\begin{align*}
A=(a_1,a_2,\ldots,a_{N+n})\quad\text{and}\quad B=(b_1,b_2,\ldots,b_{N-n}),
\end{align*}
where $a_i\leq a_{i+1}$ and $b_i\leq b_{i+1}$ for any $i$.
Each of them does not have any number occurring more than $(n-1)$ times, according to the definition of $\tau$.

For $n \leq k\leq N$, define a pair of length $N$ single-row semistandard tableaux $A'_k$ and $B'_k$ as
\begin{align*}
A'_k &:= (a_1,a_2,\ldots,a_k,b_{k-n+1},b_{k-n+2},\ldots,b_{N-n}) \quad \text{and}\\
B'_k &:= (b_1,b_2,\ldots,b_{k-n},a_{k+1},a_{k+2},\ldots,a_{N+n}).
\end{align*}
Define a map $(A,B) \mapsto (A',B') = (A'_{k_0},B'_{k_0})$,
where $k_0$ is chosen to be maximal in $\{0,1,2,\ldots,N\}$,
such that $A'_{k_0}$ and $B'_{k_0}$ are single-row semistandard tableaux of shape $N$
such that no number appears more than $(n-1)$ times.
We claim that the parameter $k_0$ always exists, and that $(A,B) \mapsto (A',B')$ is injective.

We can determine the value of $k_0$ recursively.
First, set $K:= N$ and consider
\begin{align*}
&A'_{K}=(a_1,a_2,\ldots,a_{N}),\\
&B'_{K}=(b_1,b_2,\ldots,b_{N-n},a_{N+1},a_{N+2},\ldots,a_{N+n}).
\end{align*}
If this pair of tableaux satisfies the required conditions, then we have $k_0=N$.  Otherwise, one of the following two possibilities holds.
\begin{enumerate}
\item[(1)]
If $b_{K-n}>a_{K+1}$, we find the smallest $k > 0$ such that we have $b_{K-n-k} \leq a_{N-k+1}$.  Set $K':=K-k$.  If such a $k$ does not exist, we set $K':= 0$.
\item[(2)]
The value $b_{K-n}$ appears too many times in $B'_K$.
This can only happen if $b_{K-n}=a_{K+1}$.  Suppose $a_{K-k}<a_{K-k+1} = a_{K-k+2} = \cdots = a_{K+1}$.  Then we set $K'=K - k-1$.  (Note that $k$ always exists in this case, and $K'\geq 0$.)  
\end{enumerate}
If $A'_{K'}$ and $B'_{K'}$ satisfy the required conditions, then $k_0 = K'$ is the required value.  This is {\it always} the case if $K' = 0$.  Otherwise, we redefine $K:=K'$ and apply either (1) or (2) above.

The inverse map can be constructed in a similar way. Indeed, start with 
\begin{align*}
A=(a_1,a_2,\ldots,a_{N})\quad\text{and}\quad B=(b_1,b_2,\ldots,b_{N}),
\end{align*}
and define $A'_k$ and $B'_k$ as follows:
\begin{align*}
A'_k &:= (a_1,a_2,\ldots,a_{k},b_{k-n+1},b_{k-n+2},\ldots,b_{N}) \quad \text{and}\\
B'_k &:= (b_1,b_2,\ldots,b_{k-n},a_{k+1},a_{k+2},\ldots,a_{N}).
\end{align*}
Define a map $(A,B) \mapsto (A',B') = (A'_{k_0},B'_{k_0})$,
where $k_0$ is chosen to be maximal in $\{n,\ldots,N\}$,
such that $A'_{k_0}$ and $B'_{k_0}$ are single-row semistandard tableaux of shape $N$
such that no number appears more than $(n-1)$ times.
It is not true anymore that the parameter $k_0$ always exists. When it does exist however, this map and the one previously defined are inverses of each other.  
The proof is similar to the argument above, stemming from the observation that the value of $k$ that does not work in one case also cannot work in the other.

The existence of the inverse map proves injectivity. 
It follows that $(\tau^{(0)}_{N})^2 - \tau^{(0)}_{N-n} \cdot \tau^{(0)}_{N+n}$
is monomial positive.
\end{proof}

\subsection{Proof of Theorem \ref{thm:main}}

Apply Lemma \ref{lem:convex} to 
$$
A(x) = \min_{0 \leq i \leq {\frac{(n-1)m}{n}}} (i \cdot x +  \Theta^{(0)}_{(n-1)m-in}).
$$
We can do this because of Lemma \ref{lem:convextau}.  By Corollary \ref{cor:E}, 
we conclude that the partition formed by
$$\Theta^{(0)}_{(n-1)m-in} - \Theta^{(0)}_{(n-1)m-(i+1)n}$$
is conjugate to partition $(E_1, E_2-E_1, \ldots)$.

Now apply Theorem \ref{th:E=Q} to the quantities $E_\ell(x_1,\ldots,x_m) := E^{1,\ell}(p)$.  We deduce that the partition conjugate to $(E_1, E_2-E_1, \ldots)$ is equal to the partition $\nu^{(1)}$ of the rigged configuration.
Since Proposition \ref{prop:simple_formula} is equivalent to Theorem \ref{thm:main}, the result follows.
\qed

\end{document}